\newtheorem{theorem}{Theorem}[section]
\newtheorem{lemma}[theorem]{Lemma}
\newtheorem{corollary}[theorem]{Corollary}
\newtheorem{definition}[theorem]{Definition}
\newtheorem{proposition}[theorem]{Proposition}
\newtheorem{remark}[theorem]{Remark}
\newcommand{\field}[1]{\mathbb{#1}}
\newcommand{\bB}{{\bf B}}
\newcommand{\bdr}{{\partial}}
\newcommand{\bo}{{\bf 0}}
\newcommand{\bu}{{\bf u}}
\newcommand{\calH}{\mathcal{H}}
\newcommand{\calM}{\mathcal{M}}
\newcommand{\calU}{\mathcal{U}}
\newcommand{\calV}{\mathcal{V}}
\newcommand{\clos}{{\rm clos}}
\newcommand{\cone}{{\rm Cone}}
\newcommand{\din}{{\rm d_{\rm inner}}}
\newcommand{\dist}{{\rm dist}}
\newcommand{\dou}{{\rm d_{\rm outer}}}
\newcommand{\intr}{{\rm int}}
\newcommand{\ns}{{\rm ns}}
\newcommand{\Q}{\field{Q}}
\newcommand{\R}{\field{R}}
\newcommand{\Rn}{\R^n}
\newcommand{\smp}{{\rm smp}}
\newcommand{\Sns}{{\bf S}_\bo^{\rm ns}}
\newcommand{\Sos}{{\bf S}_\bo^{\rm smp}}
\newcommand{\So}{{\bf S}_\bo}
\newcommand{\Sr}{{\bf S}}
\newcommand{\Srn}{\Sr^{n-1}}
\newcommand{\supp}{{\rm supp}}
\newcommand{\thin}{{\rm Thin}}
\newcommand{\ud}{{\rm d}}
\newcommand{\ve}{{\varepsilon}}
\newcommand{\vol}{{\rm vol}}
\newcommand{\bs}{ {\tiny $\blacksquare$} \\}
\newcommand{\Z}{\field{Z}}
\numberwithin{equation}{section}
\title[Collapsing topology of isolated singularities]{Collapsing topology of isolated singularities}
\author[L. Birbrair]{Lev Birbrair$^1$}\thanks{$^1$Research supported under CNPq grant no 300575/2010-6}
\address{Departamento de Matem\'atica, Universidade Federal do Cear\'a
(UFC), Campus do Picici, Bloco 914, Cep. 60455-760. Fortaleza-Ce,
Brasil} \email{birb@ufc.br}
\author[A. Fernandes]{Alexandre Fernandes$^2$}\thanks{$^2$Research supported under CNPq grant no 302998/2011-0}
\address{Departamento de Matem\'atica, Universidade Federal do Cear\'a
(UFC), Campus do Picici, Bloco 914, Cep. 60455-760. Fortaleza-Ce,
Brasil} \email{alexandre.fernandes@ufc.br}
\author[V. Grandjean]{Vincent Grandjean$^3$}\thanks{$^3$Research supported by CNPq grant no 150555/2011-3}
\address{Departamento de Matem\'atica, Universidade Federal do Cear\'a
(UFC), Campus do Picici, Bloco 914, Cep. 60455-760. Fortaleza-Ce,
Brasil}
\email{vgrandje@fields.utoronto.ca}
\thanks{}
\subjclass[2010]{03C64 14P15 32B20 51F99 51H99}
\keywords{bi-Lipschitz geometry, fast contracting homology, thick and thin decomposition,
separating sets}
\date{\today}
\begin{document}
\begin{abstract}
We proof here the existence of a topological thick and thin decomposition
of any closed definable thick isolated singularity germ in the spirit of the 
recently discovered metric thick and thin decomposition of complex normal surface singularities
of \cite{BNP}. Our thin zone catches exactly the homology of the family of the links
collapsing faster than linearly. Simultaneously we introduce
a class of rigid homeomorphisms more general than bi-Lipschitz ones,    
which map the topological thin zone onto the topological thin zone of its image. 
As a consequence of this point of view for the class of singularities we consider 
we exhibit an equivalent description of the notion of separating sets in terms of this fast 
contracting homology. 
\end{abstract}
\maketitle
\tableofcontents

\section{Introduction}
A subset of a Riemannian manifold carries two natural metrics: the \em outer metric \em where the 
distance between points 
is measured in the ambient space, and the \em inner metric \em where the infimum of the lengths
of rectifiable curves joining the points considered realizes the distance. 
Although the underlying topological structures are homeomorphic, 
the corresponding metric spaces may not be equivalent.

\smallskip 
Real tame or complex analytic subsets share the same local structure, namely they are topological cones over 
their link \cite{Loj,Mil,vdDMi}. (The adjective \em tame \em has to be understood as 
\em topologically tame, \em as meant in Grothendieck's \em Esquisse d'un programme\em{).}
Since such subsets are always (locally) embedded in a real 
or complex Euclidean space, they come equipped with the inner metric and with the outer metric. 
Although the local inner and outer geometric structure of real tame and complex subsets 
are more delicate to understand than their topology, some tools are available to do so. 
Following the works of the first two named authors \cite{Fe1,Bi2} for curves (see also \cite{PhTe}) 
and surfaces definable in an o-minimal structure expanding the field of real numbers, and of Grieser 
\cite{Gri} for real analytic surfaces with isolated singularities, the notion of metric type introduced 
by Valette \cite{Va1,Va2} gives a thorough description of the local metric combinatorics of the 
corresponding subsets. The topology of the family of the links - the family of intersections of a representative 
of the germ with the Euclidean spheres centered at the origin and of small radii - of tame isolated singularities 
may be rich, but its metric behavior when the radii tends to $0$ has still a long way to go before being fully 
understood. Once more some tools are available to deals with such a problem: Birbrair-Brasselet metric homology 
\cite{BiBr1,BiBr2} and Valette vanishing homology \cite{Va3} provide some insights into 
this topic. But more is needed. 

The bottom line of the investigation from a metric point of view (inner or outer)
is to understand the locally conic geometry of the class of germs and then proceed to higher degenerate metric 
types. Unlike tame real sets, complex analytic set germs have tangent cones of maximal dimension, consequently 
most of the (inner and outer) metric type is of conic nature. The principal reason to insist 
on the locally conic locus of a given singularity set-germ is - 
as it happens for cones - that the collapsing of the topology of the family of links 
is not \em faster than linearly, \em that is not faster than the velocity of the parameter (radius) 
of the family tending to $0$ (see \cite{BiBr1,BiBr2,Va2} to give meanings to this expression). 

Prior to the paper by the two first named authors \cite{BF3} was a hazy belief that germs of 
complex analytic sets could be conic. They found the first example of a complex isolated singularity 
(a Pham-Brieskorn surface singularity) which cannot be conic at its singular point \cite{BF3} and did so exhibiting 
a so called \em separating set \em which produces a homological obstruction in the family of links. 
Later, joined by Neumann, they provided more examples of non-conical isolated complex 
surface singularities \cite{BF3,BFN1,BFN3}. In their very recent work \cite{BNP}, the first named author, 
Neumann and Pichon completely describe the inner metric structure of complex normal surface singularities. 
They exhibit a (\em metric) \em decomposition of the surface germ into thick pieces and thin pieces:
The thin pieces catch exactly all the topology of the family of links collapsing onto the origin faster than   
linearly. These authors do not only give the 
precise rates of each collapsing, but they also describe very precisely the structures 
corresponding to each rate and how they are organized relatively to each other by means of a rather elementary
combinatorial object. A second remarkable feat of this decomposition is that the complement of any conical neighborhood 
of the thin zone is an inner-metric cone. Their decomposition is minimal for these two properties. 
This work rely on the very rich amount of data available for surface complex singularities and topology 
of $3$-folds, and how all these different points of view fit together.

\medskip
Although the aforementioned works relate to the Lipschitz Geometry of 
tame subsets, the point of view of our paper is slightly more general.
Inspired by the quoted paper \cite{BNP}, our starting point was 
to look for the existence of a real tame avatar of a thick-and-thin-like decomposition for 
any tame isolated singularity germ. Can such singularity germs have a \em topological \em thick and thin 
decomposition, namely: \em Does there exist a decomposition into thick zones and thin zones such that the 
latter one catches the topology of the family of the links collapsing faster than linearly? \em 
\\
As we will see our slightly more general point of view on the topology of the family of the links will 
bring tools to investigate the local metric conicalness. Note also that describing the metric 
structure of the thin zone in our general framework is an extremely arduous task, since there does
not exist any practically usable quantified complexity of such structure, although such formal data exist in 
the polynomially bounded case \cite{Va2}. 

\smallskip
The present paper proposes a solution to the problem of the existence of a topological 
thick and thin zone for thick tame isolated singularities. It is organized as follows.

In order to catch the fast contracting topology of the link, we introduce the notion of \em fast contracting 
cycles, \em which are cycles supported in the link and contracted in the subset by a tame chain whose support 
is thin and contains the origin (Definition \ref{def:admis-van-hom}). Attached to this classification problem comes 
the right class of mappings that we call \em morphisms, \em which are tame continuous mappings which extend as
tame and continuous mappings after spherical blowings-up at the source and the target (Definition \ref{def:morphism}). 
The corresponding homeomorphisms will be called \em isomorphisms. \em
The idea of using spherical blowings-up is completely natural when it comes to deal 
with the tangent directions at the center of the blowing-up, 
since the tangent link is embedded as the boundary of the strict transform (defined as the intersection 
of the strict transform with the ambient exceptional divisor). 
It turns out that the fast contracting cycles form a sub-complex of the usual homology of the link and that 
any morphism induces a homomorphism between the corresponding subgroups (Proposition \ref{prop:homom-fc-homol}).
Thanks to these simple notions we can weaken the notion of \em normal embedding model \em of a (tame) subset
\cite{BiMo} into the notion of \em simply embedded model \em which is any isomorphic image of a normally embedded model
(Definition \ref{def:SE-mod}). We then find out necessary 
and sufficient conditions on a simply embedded model (of a tame closed thick singularity) to be a cone over a 
topological tame submanifold (Lemma \ref{lem:loc-conic-homeo-cone}). 


\smallskip
In order to find this topological thick and thin composition for tame closed thick isolated singularity 
our proof rely mostly on a weak version of Federer-Flemming Theorem on the triviality of cycles with small 
volumes \cite{FF}. Namely we prove  

\medskip\noindent
{\bf Theorem \ref{thm:FF-polyhedra}.} \em Let $\Gamma$ be a compact polyhedron defined as the union of the 
simplices of a finite set $T$. Let $\xi$ be a definable  $k$-cycle in $\Gamma$ such that for any positive 
real number $\ve$ small enough there exists a homologous definable $k$-cycle $\xi_{\ve}$ in $\Gamma$ whose 
$k$-dimensional volume $\vol_k(\supp(\xi_{\ve}))$ is smaller than $\ve$. Then the cycle $\xi$ bounds a 
definable $(k+1)$-chain of $\Gamma$. \em

\medskip
As expected consequence of Theorem \ref{thm:FF-polyhedra}, cones and their isomorphism classes cannot carry any fast 
contracting homology (Corollary \ref{cor:cone-no-hom}).

\smallskip
Having identified the locus of \em non-simple tangent directions \em $\Sns X_1$ as 
the subset of points in the tangent link of a simply embedded model $X_1$ of a closed tame thick isolated 
singularity where the fast contracting cycles must collapse (Definition \ref{def:simple-dir}), 
and showed that it is tame closed and of codimension at least $1$ in the tangent 
link of $X_1$, we can prove the first version of our \em topological thick and thin decomposition: \em  

\medskip\noindent
{\bf Theorem \ref{thm:main}.} \em
Let $X_1$ be a simply embedded model of a closed definable isolated singularity of some Euclidean
space, thick at the origin  $\bo$ and with connected link at the origin. 
For any positive $\ve$ small enough, the inclusion mapping induces the following exact sequence 
\begin{equation*}
\calH_\bullet (X_1\cap \cone(\Sns X_1, \ve),\bo) \to \calH_\bullet (X_1,\bo) \to 0.
\end{equation*}
where $\calH_\bullet (X_1,\bo)$ is the sub-complex corresponding to the fast contracting homology classes.
\em

\medskip
The proof of Theorem \ref{thm:main} consists of finding a definable 
triangulation, which exists, in the simplices of which we can apply Theorem \ref{thm:FF-polyhedra} 
whenever possible.

Since the existence of a topological thick and thin decomposition is now established, we define 
the topological thin zone in a systematic way (Definition \ref{def:thin-zone}) as the image, by the 
spherical blowing-down mapping, of any \em tame neighborhood \em (Definition \ref{def:def-nghb}), in the strict 
transform, of the embedding of the non-simple point locus in the boundary of the 
strict transform as the subset $\Sns X_1\times 0$. We check that this definition makes sense (Proposition 
\ref{prop:defin-neighb}). Our main result can be rephrased as

\medskip\noindent
{\bf Theorem \ref{thm:main-bis}.} \em 
Let $X_1$ be a simply embedded model of a closed definable isolated singularity of some Euclidean
space, thick at the origin and with connected link. 
The inclusion mapping induces the following exact sequence 
\begin{equation*}
\calH_\bullet (\thin (X_1),\bo) \to \calH_\bullet (X_1,\bo) \to 0.
\end{equation*}
where $\thin(X_1)$ is the topological thin zone of the germ $(X_1,\bo)$. 
\em
%
%
%
 
\smallskip 
Eventually, we present a few examples to illustrate our 
point of view. All the (so far) known obstructions to local inner metric conicalness,
namely \em fast loops, separating sets \em and \em choking horns, \em can be very well described by the 
fast contracting homology of the topological thin zone. These notions are not only bi-Lipschitz invariant but
also isomorphic invariant when it comes to simply embedded models of tame thick isolated singularities.
We insist particularly on the case of separating sets.
Condition (SC), introduced in Definition \ref{def:cond-SC}, holds true for 
simply embedded model of closed tame thick isolated singularities if and only if there exists 
a separating set. 
We end the paper by bringing to light the relation of the existence of separating sets
with the fast contracting homology of the toplogical thin zone:

\medskip\noindent
{\bf Theorem \ref{thm:sep-set-pure-dim}.}
\em Let $X_1$ be a simply embedded model of a closed definable isolated singularity of some Euclidean space,
thick at the origin and with connected link. Suppose the tangent link is of pure dimension $d-1$. 
There exists a separating set if and only if the surjective homomorphism of the fc-homology groups 
\begin{center}
$\iota_*:\calH_{d-2} (\thin(X_1),\bo) \to \calH_{d-2}(X_1,\bo)$
\end{center} 
induced by the inclusion mapping $\iota:\thin(X_1) \to X_1$ is not injective. \em
%
%
%
%
%
%
%
%
%
%
%
%
%
%
%
%
%
%
%
%
%
%
%
%
\section{Background - Notations}\label{Section:background}
We present in this section the few definitions and notations we are going to use in the paper.
We also expect here to provide most of the material to present this note  as rather self-contained.

\medskip
In order to avoid any confusion about the notion of cone, we recall the following
\begin{definition}
A subset $C$ of some Euclidean space $\R^p$ is a cone with vertex the origin if there 
exists a subset $L$ of the unit sphere $\Sr^{p-1}$, \em the link of the cone, \em such that
\begin{center}
\vspace{4pt}
$C := \cone(L) = \{tu: u\in L$, and $t>0\}\cup \{\bo\}$.
\end{center}
\end{definition}
\noindent
{\bf Notation:} For any non-zero vector $x$ of some Euclidean space, we denote by $\nu (x)$ 
the oriented direction of the vector $x$
\begin{center}
$\nu (x) := \displaystyle{\frac{x}{|x|}}$.
\end{center}
\begin{definition}
Let $X$ be a subset of some Euclidean space $\R^p$ whose closure $\clos (X)$ contains the origin $\bo$.
The \em tangent cone \em $T_\bo X$ of $X$ at $\bo$ is the closed cone over the subset $\So X$, \em the tangent link
of the subset $X$ at $\bo$ \em defined as
\begin{center}
\vspace{4pt}
$\So X := \{\bu \in \Sr^{p-1}:$ {\rm there exists} $(x_k)_{k\geq 1}\in X\setminus \bo$ {\rm such that}  $\hfill$

\vspace{4pt}
$\hfill x_k \to \bo$ {\rm and}
$\nu (x_k) \to \bu$ {\rm as} $k\to+\infty\}$.
\end{center}
\end{definition}

As already mentioned in the introduction, any subset $X$ of some Euclidean space can be equipped with two metrics: 
The \em outer metric \em measures the distances in the ambient Euclidean space while the \em inner metric \em
measures the distance between two points of $X$ as the infimum of the length of
rectifiable curves within $X$ connecting the two points. These two structure of metric
spaces on $X$ are not necessarily equivalent metric structures. This fact justify the introduction
of the following
\begin{definition}\label{def:n-e}
i) A subset $X$ of $\R^p$ is \em normally embedded \em if the identity mapping between the metric 
spaces $(X,\dou)$ and $(X,\din)$ is bi-Lipschitz. 
\\
ii) Any normally embedded subset $X$ bi-Lipschitz homeomorphic to a subset $X_0$ contained in 
$\R^q$ and equipped with the inner metric is called \em a normally embedded model \em of $X_0$. 
\end{definition}
Throughout this paper we are going to work with subsets definable in an o-minimal structure 
expanding the field of real numbers. For readers not familiar with an o-minimal framework, they can think of 
the category of the semi-algebraic subsets - which form an o-minimal structure - (see \cite{vdDMi,vdD}
for definitions and standard properties). 

\medskip\noindent
{\bf Important Note:} We assume for the rest of the paper 
we are given an o-minimal structure $\calM$ expanding the field of real numbers. Any future occurrence of the 
word \em definable \em will mean definable in this o-minimal structure $\calM$.
\begin{remark}
If $X$ is a compact definable subset of $\R^p$, \em normally embedded \em will mean \em definably
normally embedded \em, that is the bi-Lipschitz homeomorphism in Definition \ref{def:n-e} 
is required to also be a definable mapping.
\end{remark}
A compact definable subset $X_0$ of some Euclidean space $\R^p$ always admits
a \em normally embedded model, \em that is there exists a normally embedded definable
subset $X$ of some $\R^n$ which is definably bi-Lipschitz homeomorphic to the metric space $(X_0,\din)$
(the definable case works exactly as the semi-algebraic case of \cite{BiMo} since the existence 
of a definable pancake decomposition can be proved along the same lines as those of the 
subanalytic case \cite{Kur}, see also \cite{Par,Sh,Co,Paw,Va3}).
We introduce now two notions which will be key for the rest of the paper 
\begin{definition}\label{def:thick-thin}
Let $(X,\bo)$ be the germ at the origin of a closed definable subset of some Euclidean space. Let $d$ 
be the dimension of the set-germ $(X,\bo)$. \\
i) The subset $X$ is \em thick at the origin $\bo$ \em  if the tangent link $\So X$ at $\bo$ has dimension $d-1$.
\\
ii) The subset $X$ is \em thin at the origin $\bo$ \em  if the tangent link $\So X$ at $\bo$ has dimension strictly 
smaller than $d-1$.
\end{definition}
Note that the notion of \em thick \em we propose in Definition \ref{def:thick-thin}
is not as specific and thus looser than that provided in \cite{BNP}.

\medskip
Let $X$ be a definable subset of some Euclidean space $\Rn$. We recall that the germ $(X,x)$ of 
$X$ at any of its point $x$ is topologically conic: 
Indeed, there exists $\ve >0$ such that the intersection $X\cap \bB^n(x,r)$, where $\bB^n(x,r)$ is the 
ambient Euclidean open ball of $\Rn$ 
centered at $x$ of radius $r$, is definably homeomorphic to the cone over the intersection
$X\cap \Srn(x,r)$ for any positive radius $r<\ve$ where $\Srn (x,r)$ is the Euclidean sphere of 
radius $r$ and centered $x$ (\cite{Loj,Mil,vdDMi,vdD,Sh,Co}). 
\begin{definition}
The \em link of \em $X$ at $x$ is the topological space homeomorphic to any intersection $X\cap \Srn(x,\ve)$ 
for any sufficiently small positive $\ve$. By a minor abuse of language we will also call \em link \em 
any intersection  $X\cap \Srn(x,\ve)$ when $\ve$ is small enough.
\end{definition} 
Hereafter, every representative of a germ $(X,x)$ is considered into an ambient Euclidean open ball $\bB(x,r)$ of radius $r<\ve.$
\begin{definition}\label{def:isol-sing}
The germ at the origin $\bo$ of a definable closed subset $X$ of some Euclidean space $\R^p$ is called \em 
a closed definable isolated singularity, \em if $(X\setminus \bo,\bo)$ is the germ of a $C^1$  
submanifold of $\R^p$. 
\end{definition}
Although the notion of isolated singularity is classical, we recall it in the context we are interested 
in to emphasize that in the following sections we will not only deal with a closed isolated singularity but 
also and mainly with definably homeomorphic images, allowing the possibility of singular points outside the origin.  
In particular we will speak very often of a \em normally embedded model \em 
and of a \em simply embedded model \em (see Definition \ref{def:SE-mod}) of the given closed 
definable isolated singularity, which are definably homeomorphic images, with additional properties. 
Since the link of an isolated singularity is always a $C^1$ submanifold, 
the link of any definably homeomorphic image will still be a definable topological
submanifold 

\medskip
We end-up this section recalling the notion of \em spherical blowing-up \em of the origin of $\Rn$ defined 
as the following mapping 
\begin{center}
\vspace{4pt}
$\beta_n : \Srn \times \R_+ \to \Rn\,$, $\,(\bu,r) \to r\bu$.
\vspace{4pt}
\end{center}
The \em strict transform \em of the subset $X$ under the spherical blowing-up $\beta_n$ is the subset of 
$\Srn\times \R_+$ defined as
\begin{center}
\vspace{4pt}
$X':= \clos (\beta_n^{-1}(X\setminus \bo)$.
\vspace{4pt}
\end{center}
The \em boundary $\bdr X'$ \em of the strict transform $X'$ is just the intersection of $X'$ with 
the ambient exceptional divisor $\Srn \times 0$, namely,
\begin{center}
\vspace{4pt}
$\bdr X':=X'\cap (\Srn\times 0) = \So X \times 0$. 
\vspace{4pt}
\end{center}
We would like to insist on the fact that, since the tangent link $\So X$ of $X$ at the origin is the basis 
of the tangent cone $T_\bo X$ of $X$ at the origin, the main interest for us in using a spherical blowing-up 
is to give substance to the tangent link of $X$, embedded in the strict transform $X'$ of $X$ as the boundary 
$\bdr X'$. The strict transform $X'$ allows to see simultaneously the geometric object we are interested in 
($X$) and its tangent cone at the origin ($T_\bo X$).

Note that when there will be no ambiguity on the ambient Euclidean space we are in, we will just write $\beta$.
%
%
%
%
%
%
%
%
%
%
%
%
%
%
%
%
%
%
%
%
\section{Fast contracting cycles and fast contracting homology}\label{Section:FCCaFCH}
We present here the objects and the morphisms of the classification problem 
as well as some of their elementary properties that will allow to
define in Sections \ref{Section:MR} and \ref{Section:TaToTIS} the notion of \em topological
thin zone. \em

We want to classify germs of definable subsets (of some Euclidean space) up to 
a class of definable homeomorphisms with special properties. The simplest model of germs, up to 
these special homeomorphisms, we have in mind are just cones. 
First, we present the objects under our considerations which we want to be preserved by these
special homeomorphisms. Only after, we will define precisely what are these special homeomorphisms.

\medskip
Let $(X,\bo)$ be a germ of definable closed subset at the origin $\bo$ of some Euclidean space and 
with connected link at the origin. 

\smallskip
We recall that any singular homology class of any given bounded definable set can be realized by a definable cycle
\cite{Loj,Har,vdD,Sh,Co}.  
\begin{definition}\label{def:admis-van-hom}
A definable singular $k$-dimensional cycle $\xi$ of $X$ \em supported in the link \em of $X$, that is 
whose support is contained in $X\setminus \bo$, is said to be \em fast-contracting \em if there exists a
definable $(k+1)$-dimensional chain $\eta$ on $X$ bounded by the cycle $\xi$, whose support is thin 
at the origin and contains the origin.
\end{definition}
We will shorten the expression \em fast contracting cycle supported in the link \em into \em fc-cycle. \em

We denote $\calH_k(X,\bo)$ the subset of the $k$-th singular homology group of the link $H_k(X\setminus\bo)$ 
consisting only of the homology classes represented by fc-cycles. We observe that $\calH_k(X,\bo)$
is a subgroup of $H_k(X\setminus\bo)$.
\begin{definition}\label{def:van-hom-grp}
The group $\calH_k(X,\bo)$ is called the \em $k$-th local homology group of the fc-cycles of $X$ at the origin. \em
\end{definition}
Any homology class of the link represented by a fast contracting cycle supported in the 
link will be called a \em fast contracting homology class \em and shortened to \em fc-homology class. \em

\medskip
We do not precise the coefficients group since this will not make any change on what we are going to present and 
demonstrate. What we have in minds are the standard coefficients groups: $\Z/2, \Z, \Q$ and $\R$.

\medskip
Having specified the type of properties we want to classify germs of isolated singularities 
by, we introduce the class of morphisms which goes with our classification problem.

Let $(X,\bo)$ and $(Y,\bo)$ be two germs of closed definable subsets of $\Rn$ and $\R^m$ respectively. 
\begin{definition}\label{def:morphism}
A continuous definable mapping $\Phi:(X,\bo)\to(Y,\bo)$ is called a \em morphism \em
if 

i) the image $\Phi(X\setminus \bo)$ is contained in $Y\setminus\bo$;

ii) the mapping 
\begin{center}
$\beta_m^{-1}\circ \Phi\circ \beta_n: X'\setminus \bdr X' \to Y'\setminus \bdr Y'$
\end{center}
extends as a  continuous definable mapping $\Phi':X'\to Y'$. 
In particular the mapping $\Phi'$ induces 
a continuous definable mapping from the boundary $\bdr X'$ of $X'$ into the boundary $\bdr Y'$ of $Y'$.
\end{definition}
%
An \em epi-morphism \em is a morphism such that the extension mapping is surjective.

A \em mono-morphism \em is a morphism such that the extension mapping is injective.

A morphism which is an epi-morphism and a mono-morphism admits a continuous 
definable inverse mapping which extends as an epi-morphism and a mono-morphism. 
We call such a morphism an \em isomorphism. \em

\smallskip\noindent
{\bf Examples.} 
1) Let $S_\alpha:=\{(x,y,z) \in \R^3: = x^2+y^2 = z^\alpha, z\geq 0\}$ be the $\alpha$-horn, for a real 
number $\alpha >2$. Let $\pi$ be the orthogonal projection from the $\alpha$-horn $S_\alpha$ to the horizontal 
plane $\{z=0\}$. Whence $S_\alpha$ is equipped with the outer metric of $\R^3$, the mapping $\pi$ is definable 
and Lipschitz but is not a morphism. Its inverse $\pi^{-1}$, which is not Lipschitz, is a morphism but is 
not an isomorphism. 

\smallskip\noindent
2) Any definable Lipschitz homeomorphism $(X,\dou) \to (Y,\dou)$ between definable germs 
embedded in some Euclidean spaces 
extends as an isomorphism (see below Proposition \ref{prop:bilip-iso} and \cite{BeLy}). 

\smallskip\noindent 
3) Let $T$ be a closed definable cone with vertex the origin. The homeomorphism $T\ni x \to |x| x \in T$ 
and its inverse are both isomorphisms. Because of its definability and its fast collapsing to the origin, 
this homeomorphism admits a differential-like mapping at the origin $T_\bo X \to T_\bo Y$ as in \cite{BeLy} 
and is mapping the whole tangent cone $T_\bo X$ onto the null vector. Even though the inverse homeomorphism 
does not admit a differential-like mapping, it induces a homeomorphism between the tangent links.

\smallskip\noindent 
4) We use \cite[Example 4.3]{BF3} to emphasize even more the difference between our notion of isomorphism 
and definable outer-bi-Lipschitz homeomorphisms. Consider the family of semi-algebraic isolated 
hypersurface singularities of $\R^{n-1}\times\R_+$ defined as:
\begin{center}
\vspace{4pt}
$H_\lambda : = \{(x,t)\in \R^{n-1}\times \R_+: P(x,t)P(x,-t) = t^\lambda\}$ with $\hfill$

\vspace{4pt}
$\hfill P(x,t) =  -t^2 + (x_1-t)^2 + \sum_{i=2}^{n-1} x_i^2$ and $\lambda\in \Q_{>2}$.  
\vspace{4pt}
\end{center}
This family was used when $n=4$ to exhibit a \em separating set, \em namely the subset $H_\lambda \cap \{x_1 =0\}$,
whose existence obstructs the inner metric conicalness of $H_\lambda$ (see Section 
\ref{Section:examples} for the relation of separating set and fast contracting homology).
Note that if $\lambda'>\lambda>2$ the subsets $H_{\lambda'}$ and $H_\lambda$ cannot be outer bi-Lipschitz 
homeomorphic. Indeed, let $\Phi_\alpha:\R^{n-1}\times \R_+\to\R^{n-1}\times \R_+$ be the definable mapping defined as $(x,t)\to
(t^\alpha x,t^{\alpha+1})$ for $\alpha$ a positive rational number. similarly to example 3), this mapping induces 
an isomorphism from $H_\lambda$ onto $H_{\lambda'}$ with $\lambda' := \lambda + \alpha(\lambda -2)$. 
 
\smallskip
As already suggested by the third and fourth example above, the notion of isomorphisms we introduce here is much more 
general than the notion of definable outer bi-Lipschitz homeomorphisms. 

\smallskip
As expected, one of the interests of the fc-homology groups lies in the following result:
\begin{proposition}\label{prop:homom-fc-homol}
Let $(X,\bo)$ and $(Y,\bo)$ be germs of closed definable subsets of some Euclidean spaces.
For each $k\leq \dim Y$, a morphism $\Phi:(X,\bo)\to(Y,\bo)$ induces a homomorphism 
$\Phi_*: \calH_k(X,\bo)\to\calH_k(Y,\bo)$.
If $\Phi$ is an isomorphism then  $\Phi_*$ is also an isomorphism of groups.
\end{proposition}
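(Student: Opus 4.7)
The plan is to push singular chains forward along $\Phi$ and check that the fast-contracting property is preserved, using the extension $\Phi':X'\to Y'$ across the spherical blowings-up to control tangent cones. Condition (i) of Definition \ref{def:morphism} says $\Phi$ restricts to a continuous definable map $X\setminus\bo\to Y\setminus\bo$. Composition with $\Phi$ sends definable singular simplices of $X\setminus\bo$ to definable singular simplices of $Y\setminus\bo$, so the usual chain-level pushforward descends to a homomorphism $\Phi_*:H_k(X\setminus\bo)\to H_k(Y\setminus\bo)$.

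The heart of the argument is the restriction of $\Phi_*$ to fc-homology. Given a representative $\xi=\partial\eta$ of an fc-class, with $\eta$ a definable $(k+1)$-chain whose support $E:=\supp\eta$ is thin at $\bo$ and contains $\bo$, the pushforward $\Phi_*\eta$ is a definable $(k+1)$-chain bounding $\Phi_*\xi$ and whose support $\Phi(E)$ contains $\Phi(\bo)=\bo$. What remains is to show $\Phi(E)$ is thin at $\bo$, and this is where condition (ii) enters as a tangent-cone functoriality. Compactness of $E$ together with condition (i) forces $\Phi^{-1}(\bo)\cap E=\{\bo\}$. Given $\bv\in\So\Phi(E)$, pick $x_k\in E$ with $\Phi(x_k)\to\bo$ and $\nu(\Phi(x_k))\to\bv$; necessarily $x_k\to\bo$, and after extraction $\nu(x_k)\to\bu\in\So E$. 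Lifting to the strict transforms, $(\nu(x_k),|x_k|)\in E'$ converges to $(\bu,0)\in\bdr E'$, and continuity of $\Phi'$ yields $\Phi'(\bu,0)=(\bv,0)$. Hence $\So\Phi(E)$ is contained in the first-coordinate projection of $\Phi'(\bdr E')$, and the chain of inequalities
\begin{equation*}
\dim\So\Phi(E)\le\dim\Phi'(\bdr E')\le\dim\bdr E'=\dim\So E
\end{equation*}
follows from the fact that continuous definable maps do not increase dimension. Combined with $\dim\Phi(E)\le\dim E$ and the thinness hypothesis $\dim\So E<\dim E-1$, the aim is to deduce $\dim\So\Phi(E)<\dim\Phi(E)-1$; this transfer of strict inequality uses that $\Phi'$ is a continuous definable surjection $E'\twoheadrightarrow(\Phi(E))'$ compatible with the exceptional boundary, so that any ambient-dimension drop is matched by at least as large a tangent-link drop.

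Finally, when $\Phi$ is an isomorphism, its inverse $\Phi^{-1}$ is again a morphism, with extension across the blowings-up the continuous definable inverse $(\Phi')^{-1}$, so the first part applied to both $\Phi$ and $\Phi^{-1}$, together with the standard functoriality $(\Phi\circ\Phi^{-1})_*=\mathrm{id}$, exhibits $\Phi_*$ and $(\Phi^{-1})_*$ as mutually inverse isomorphisms between the fc-homology groups. The main obstacle I anticipate is the strict-inequality transfer in the previous paragraph: a morphism can a priori drop ambient dimension more than tangent-link dimension, and ruling this out for genuine fc-fillings requires a careful definable fibre-dimension comparison along the exceptional divisor of $X'$ versus its interior.
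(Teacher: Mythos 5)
Your approach is the same as the paper's: push chains forward along $\Phi$, use the extension $\Phi'$ across the blowings-up to bound $\dim\So\Phi(\supp\eta)$ by $\dim\So\supp\eta$, and conclude that $\Phi_*\eta$ is a thin filling of $\Phi_*\xi$. The paper states this in a few lines and does not discuss the issue you flag at the end, nor does it spell out the isomorphism case (which you handle correctly by applying the morphism case to $\Phi$ and $\Phi^{-1}$).

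The concern you raise is a genuine subtlety that the paper's proof glosses over: if the ambient dimension drops, $\dim\Phi(\supp\eta)<\dim\supp\eta$, then $\dim\So\Phi(\supp\eta)\le\dim\So\supp\eta$ alone does not give thinness of $\Phi(\supp\eta)$, since thinness is measured against the (now smaller) dimension of the image. However, you do not need the delicate fibre-dimension comparison you anticipate; a case split closes the gap. Write $E=\supp\eta$, so $\dim E\le k+1$ and $\dim\Phi(E)\le\dim E$. If $\dim\Phi(E)=k+1$, then $\dim E=k+1$ too, and thinness of $\Phi(E)$ follows from $\dim\So\Phi(E)\le\dim\So E<\dim E-1=\dim\Phi(E)-1$ exactly as the paper says. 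If instead $\dim\Phi(E)\le k$, then $\Phi(E)$ need not be thin, but $[\Phi_*\xi]=0$ in $H_k(Y\setminus\bo)$, so it lies in $\calH_k(Y,\bo)$ trivially. Indeed $\Phi_*\xi$ bounds $\Phi_*\eta$ inside $\Phi(E)$, so $[\Phi_*\xi]=0$ in $H_k(\Phi(E))$; by the long exact sequence of the pair $(\Phi(E),\Phi(E)\setminus\bo)$ its class in $H_k(\Phi(E)\setminus\bo)$ is in the image of $H_{k+1}(\Phi(E),\Phi(E)\setminus\bo)$; and since $\Phi(E)$ is locally conic at $\bo$ over a link of dimension at most $k-1$, excision and the cone structure identify $H_{k+1}(\Phi(E),\Phi(E)\setminus\bo)$ with the $k$-th reduced homology of that link, which vanishes. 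With this case distinction your write-up is complete and somewhat more careful than the paper's own proof.
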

\begin{proof}
We recall that the dimension of the image of a definable subset by a definable and continuous mapping is 
lower than or equal to the dimension of the subset. The image of a definable $k$-chain $\xi$ by the mapping 
$\Phi$ is a definable $k$-chain. Since the cycle $\xi$ bounds a thin chain $\eta$ whose support contains the origin,
its image $\Phi(\xi)$ bounds the image $\Phi (\eta)$. When embedded in the boundary of the strict transform $X'$,
the image of the tangent link of the support $\supp(\eta)$ by the mapping $\Phi'$ is of dimension lower than or equal 
to the dimension of the tangent link of $\supp(\eta)$, thus the image 
$\Phi(\xi)$ is contracted by a thin chain, namely the image $\Phi (\eta)$.
\end{proof}
We can even be more specific requiring an additional information on the collapsing 
phenomenon of a given fc-homology class taking into account a measure of the collapsing. 
\begin{definition}\label{def:dim-drop}
1) The \em dimension drop at the origin \em of a fc-cycle $\xi$ is the maximum of $\dim \supp (\eta) - 
\dim T_0 \supp (\eta)$ taken over the definable chains $\eta$ contracting $\xi$ in $X$ whose support 
contains the origin and is thin. 
\\
2)
The \em dimension drop of a fc-homology class $[c]$ \em is the maximum of the dimension drop at the origin of the fc-cycles
homologous to $[c]$. 
\end{definition}
For any degree $k = 1,\ldots,\dim X$, and any dimension drop $\delta =1,\ldots,k$, the 
fc-homology classes of $\calH_k(X,\bo)$  whose dimension drop is larger than or equal to $\delta$
form a subgroup that we denote $\calH_k^\delta(X,\bo)$. We find the following inclusions 
\begin{center}
\vspace{4pt}
$\calH_k^k (X,\bo) \subset \ldots \subset \calH_k^1 (X, \bo) =\calH_k (X, \bo)$.
\vspace{4pt}
\end{center}
Using Proposition \ref{prop:homom-fc-homol} and its proof, we refine its statement in the 
following way
\begin{proposition}\label{prop:homom-fc-homol-degree}
Let $(X,\bo)$ and $(Y,\bo)$ be germs of closed definable subsets of some Euclidean spaces. 
For each degree $k\leq \dim (Y,\bo)$ and each 
dimension drop $\delta =1,\ldots,k$, any morphism $\Phi:(X,\bo)\to(Y,\bo)$ induces a homomorphism 
$\Phi_*: \calH_k^\delta(X,\bo)\to\calH_k^\delta(Y,\bo)$.
\end{proposition}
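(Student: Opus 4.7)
The plan is to revisit the proof of Proposition~\ref{prop:homom-fc-homol} and keep track of the effect of the morphism on the two quantities whose difference defines the dimension drop. Given a class $[\xi]\in\calH_k^\delta(X,\bo)$, pick a fc-cycle $\xi$ representing it and a thin definable $(k+1)$-chain $\eta$ with $\bo\in\supp(\eta)\subset X$, $\partial\eta=\xi$, and $\dim\supp(\eta)-\dim T_\bo\supp(\eta)\geq\delta$. By the construction in the proof of Proposition~\ref{prop:homom-fc-homol}, the pushed-forward cycle $\Phi(\xi)$ is contracted in $Y$ by the chain $\Phi(\eta)$, and this already yields the map on fc-classes; the content of the refinement is the inequality for the dimension drop of $\Phi(\eta)$.

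I would first dispose of the degenerate case: if $\Phi(\eta)=0$ as a chain, then $\Phi(\xi)=\partial\Phi(\eta)=0$, so $\Phi_*[\xi]=0$ belongs to every subgroup $\calH_k^\delta(Y,\bo)$ by definition. Assuming $\Phi(\eta)\neq 0$, the support of a non-zero singular $(k+1)$-chain contains at least one non-degenerate $(k+1)$-simplex, so $\dim\supp(\Phi(\eta))=k+1=\dim\supp(\eta)$. Thus no dimension is lost on the support side of the drop under the pushforward, regardless of how much collapsing $\Phi$ does.

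The crux is then to bound $\dim T_\bo\supp(\Phi(\eta))$ from above by $\dim T_\bo\supp(\eta)$. For this I would use the morphism data: the condition $\Phi(X\setminus\bo)\subset Y\setminus\bo$ forces $\Phi^{-1}(\bo)\cap\supp(\eta)=\{\bo\}$, so any sequence $x_j\in\supp(\Phi(\eta))\setminus\bo$ with $x_j\to\bo$ lifts (after passing to a subsequence) to a sequence $y_j\in\supp(\eta)$ with $y_j\to\bo$. The identity
$$\Phi'\bigl(\nu(y_j),|y_j|\bigr)=\bigl(\nu(\Phi(y_j)),|\Phi(y_j)|\bigr)$$
together with the continuity of the extension $\Phi':X'\to Y'$ at $\bdr X'$ shows that every tangent direction of $\supp(\Phi(\eta))$ at $\bo$ arises as the image, under the restriction $\Phi'|_{\bdr X'}:\bdr X'\to\bdr Y'$, of a tangent direction of $\supp(\eta)$ at $\bo$. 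Because $\Phi'|_{\bdr X'}$ is continuous and definable, it cannot raise dimension, so $\dim T_\bo\supp(\Phi(\eta))\leq\dim T_\bo\supp(\eta)$.

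Combining the two estimates yields
$$\dim\supp(\Phi(\eta))-\dim T_\bo\supp(\Phi(\eta))\ \geq\ \dim\supp(\eta)-\dim T_\bo\supp(\eta)\ \geq\ \delta,$$
which simultaneously shows that $\supp(\Phi(\eta))$ is thin at the origin (this is the inequality with $\delta\geq 1$) and that $\Phi(\eta)$ realises the dimension drop at least $\delta$ of the class $\Phi_*[\xi]$. Hence $\Phi_*\bigl(\calH_k^\delta(X,\bo)\bigr)\subset\calH_k^\delta(Y,\bo)$, and the map is a group homomorphism by functoriality of the pushforward on chains. The main obstacle I anticipate is the passage from tangent directions of the image to images of tangent directions: one must genuinely exploit the fact that a morphism extends continuously to the spherical blowings-up, because for a merely continuous definable map this inclusion need not hold — this is exactly where the hypothesis in Definition~\ref{def:morphism} bites.
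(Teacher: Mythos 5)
Your strategy reproduces the paper's (the paper gives no separate proof of this proposition, referring instead to the proof of Proposition \ref{prop:homom-fc-homol}): track both quantities in the dimension drop under pushforward, and control the tangent cone of $\supp(\Phi(\eta))$ via the extension $\Phi'$ of $\Phi$ to the strict transforms. Your derivation of the estimate $\dim T_\bo\supp(\Phi(\eta))\leq\dim T_\bo\supp(\eta)$ is correct and matches the paper's reasoning exactly.

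There is, however, a gap in the step asserting that $\dim\supp(\Phi(\eta))=k+1$ whenever $\Phi(\eta)\neq 0$, justified by the claim that a non-zero singular $(k+1)$-chain contains a non-degenerate $(k+1)$-simplex. Whatever notion of ``non-degenerate'' is intended, the inference to $\dim\supp(\Phi(\eta))=k+1$ is not valid: a singular $(k+1)$-simplex that is non-degenerate in the simplicial sense (i.e.\ does not factor through a simplicial degeneracy) can nevertheless have an image of topological dimension strictly below $k+1$. When $\Phi$ collapses part of $\supp(\eta)$, the dimension of $\supp(\Phi(\eta))$ can genuinely drop, and then the inequality $\dim\supp(\Phi(\eta))-\dim T_\bo\supp(\Phi(\eta))\geq\delta$ no longer follows from your two estimates alone; it additionally requires $\dim\supp(\Phi(\eta))=\dim\supp(\eta)$. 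To be fair, the paper's own proof of Proposition \ref{prop:homom-fc-homol} passes over exactly the same point, deducing thinness of $\supp(\Phi(\eta))$ from the tangent-link estimate without ever comparing $\dim\supp(\Phi(\eta))$ to $\dim\supp(\eta)$; so your argument is no weaker than the source. Still, the justification via ``non-degenerate simplex'' you offer does not actually close this gap, and a correct treatment would have to argue (for instance by replacing $\eta$ with a suitable geometric representative and discarding pieces that $\Phi$ collapses) that one may choose the contracting chain so that the dimension of its support is preserved.
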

Finally let us remark that when the tangent link of the subset $X$ at the origin has dimension 
$e-1 < \dim (X,\bo) -1$, then for any degree $k \geq e$ we necessarily find  
$\calH_k(X,\bo) = H_k(X\setminus \bo)$. In particular if $e= 1$, 
we find $\calH_k^k (X,\bo) = \calH_k(X,\bo) = H_k(X\setminus \bo)$ for all $k$. 

We check easily that any non-trivial fc-homology class can be realized 
as Metric homology class \cite{BiBr1,BiBr2} and as Vanishing homology class \cite{Va3}.
%
%
%
%
%
%
%
%
%
%
%
%
%
%
%
%
%
%
%
%
%
%
%
%
%
%
%
%
%
%
%
%
\section{Simple tangent directions}\label{Section:STDaBT}
The aim of this section is multiple. First we characterize the isomorphism equivalence class 
of a cone over a topological definable submanifold. This will allow to show in Section \ref{Section:MR}
that cones do not have fast contracting homology (Theorem \ref{thm:FF-polyhedra} and Corollary \ref{cor:cone-no-hom}).
Second, this description result will allow to define the non-simple tangent direction locus, 
which is precisely the locus of the tangent link over which the fc-cycles will collapse and in a neighborhood of which 
the thin zone we are looking for is structured. 

\medskip
Let $(X,\bo)$ be a closed definable subset of some Euclidean space with connected link at the origin $\bo$.

\smallskip\noindent
{\bf Notation:} The set of points of the tangent link $\So X$ at which the germ of $\So X$ is the germ 
of topological submanifold of maximal dimension is denoted by $\So^* X$. Note that
$\So^* X$ is open in the tangent link, definable and of maximal dimension. 

\medskip
We begin with the next result in order to have constantly present in mind 
that the class of definable homeomorphisms extending as isomorphisms we are using 
contains the definable outer-bi-Lipschitz homeomorphism.
\begin{proposition}\label{prop:bilip-iso}
Let $(X,\bo)$ be the germ of a closed definable subset with connected link at the origin $\bo$. 
Let $\Phi: (X,\bo) \to (Y,\bo)$ be a definable  bi-Lipschitz homeomorphism for the outer metrics.
The homeomorphism $\Phi$ is an isomorphism.
\end{proposition}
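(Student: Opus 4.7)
The plan is to construct the continuous definable extension $\Phi':X'\to Y'$ demanded by Definition \ref{def:morphism}, to verify it is bijective, and then to apply the same construction to $\Phi^{-1}$, which is also definable and outer bi-Lipschitz. Granted both extensions, the isomorphism conclusion will be automatic.

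The first condition (i) of Definition \ref{def:morphism} is immediate since $\Phi$ is a homeomorphism with $\Phi(\bo)=\bo$. For the extension, the natural candidate is
\begin{equation*}
\Phi'(\bu,r) \;=\; \bigl(\nu(\Phi(r\bu)),\,|\Phi(r\bu)|\bigr)\quad\text{for } r>0,
\end{equation*}
together with $\Phi'(\bu,0)=(\Phi_*(\bu),0)$ on $\bdr X'=\So X\times 0$, for a map $\Phi_*:\So X\to\So Y$ to be constructed. The second coordinate is controlled by the Lipschitz estimate $|\Phi(r\bu)|\leq L\,r$, so it extends continuously by $0$. The heart of the argument is the first coordinate.

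The core step is the construction of $\Phi_*$. Given $\bu\in\So X$, definable curve selection provides a definable curve $\gamma:(0,\ve)\to X\setminus\bo$ with $\gamma(t)\to\bo$ and $\nu(\gamma(t))\to\bu$; the composed definable curve $\Phi\circ\gamma$ then admits, by o-minimality, a limit direction $\bv\in\So Y$, and I set $\Phi_*(\bu):=\bv$. To show this does not depend on $\gamma$, I use the outer bi-Lipschitz constant $L$: if $\gamma_1,\gamma_2$ are two such curves reparametrized so that $|\gamma_i(t)|=t$, then $|\gamma_1(t)-\gamma_2(t)|/t\to 0$ since both have tangent direction $\bu$, and therefore
\begin{equation*}
\frac{|\Phi(\gamma_1(t))-\Phi(\gamma_2(t))|}{|\Phi(\gamma_1(t))|}\;\leq\; L^2\,\frac{|\gamma_1(t)-\gamma_2(t)|}{t}\;\longrightarrow\;0,
\end{equation*}
which forces $\nu(\Phi\gamma_1(t))$ and $\nu(\Phi\gamma_2(t))$ to the same limit. (Alternatively one can invoke directly the tangent-cone differential of Bernig--Lytchak \cite{BeLy}.) The same type of estimate, applied now to arbitrary sequences $(\bu_k,r_k)\in X'$ with $(\bu_k,r_k)\to(\bu,0)$, yields the continuity of $\Phi'$ at boundary points; continuity elsewhere is clear, and definability follows from observing that the graph of $\Phi'$ is the topological closure in $X'\times Y'$ of the graph of the definable map $\beta_m^{-1}\circ\Phi\circ\beta_n$, hence is definable.

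Running the same construction on $\Phi^{-1}$ produces a continuous definable extension $(\Phi^{-1})':Y'\to X'$. Because $\Phi'\circ(\Phi^{-1})'$ and $(\Phi^{-1})'\circ\Phi'$ agree with the identity on the dense open subsets $Y'\setminus\bdr Y'$ and $X'\setminus\bdr X'$ respectively, continuity forces both compositions to be the identity throughout; in particular $\Phi'$ is bijective and $\Phi$ is simultaneously an epi- and a mono-morphism, hence an isomorphism. The main obstacle in this plan is the well-definedness and continuity of $\Phi_*$ at a single boundary point: this is exactly where the outer (rather than inner) bi-Lipschitz hypothesis is used, through the normalized displacement estimate above. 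Everything else reduces to formal manipulations in the spherical blow-up and to standard properties of definable maps.
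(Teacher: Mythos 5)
Your proof is correct and follows essentially the same route as the paper: both build the boundary extension from the normalized tangent differential of Bernig--Lytchak (the paper cites it directly; you rederive the needed consequence with an explicit $L^2$ estimate and also note the citation as an alternative), and both conclude the isomorphism by symmetry applied to $\Phi^{-1}$. Your version is a bit more self-contained, also addressing definability of $\Phi'$ and the density argument for bijectivity, which the paper leaves implicit.
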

\begin{proof}
In their beautiful paper, Bernig \& Lytchak, by taking the derivative in $t=0$ of 
a definable curve $\gamma$ of $(X,\bo)$ and its image $\Phi\circ \gamma$ of $(Y,\bo)$ both starting at the origin,
define a definable homogeneous mapping $D_\bo \Phi :T_\bo X \to T_\bo Y$, which also happens 
to be a bi-Lipschitz homogeneous homeomorphism \cite{BeLy}. 
Namely, given a continuous definable curve $\gamma:[0,\ve[ \to X$ starting at $\bo$ with tangent 
vector $\bu$ at $t=0$, the image vector $D_0\Phi \cdot \bu$ is defined simply as the limit 
$\lim_{t\to0} \frac{\ud}{\ud t}  \Phi\circ \gamma (t)$. 
The homogeneity and the continuity of the mapping $D_\bo \Phi$ induce 
a definable homeomorphism $\So X \to \So Y$ defined by the normalization of $D_\bo\Phi$
\begin{center}
\vspace{4pt}
$\nu_\bo\Phi:\bu \to \displaystyle{\frac{D_\bo\Phi\cdot\bu}{|D_\bo\Phi\cdot\bu|}}$
\vspace{4pt}
\end{center}
Thus when a sequence $(x_k)_k$ of points of $X$ goes to the origin such that the associated sequence of oriented directions 
$(\nu(x_k))_k$ goes to $\bu$, we can write 
\begin{center}
\vspace{4pt}
$\Phi (x_k) = |x_k| D_\bo \Phi\cdot \bu + o(|x_k|)$.
\vspace{4pt}
\end{center}
Assuming that the subsets $X$ and $Y$ are embedded respectively in $\Rn$ and in $\R^p$, we see that the definable 
mapping $\beta_p^{-1}\circ\Phi\circ\beta_n$ when restricted to open strict transform $X'\setminus\bdr X'$ is defined 
as 
\vspace{4pt}
\begin{equation}\label{eq:lift-phi}
\vspace{4pt}
X'\setminus \bdr X' \ni (r,\bu) \to \left( \frac{\Phi(r\bu)}{|\Phi(r\bu)|},|\Phi(r\bu)|\right) \in Y'\setminus \bdr Y'.
\end{equation}
We just have to check that this mapping extends continuously as the homeomorphism
$\nu_\bo\Phi\times0:\bdr X' \to \bdr Y'$ defined as $(\bu,0) \to (\nu_\bo \Phi (\bu),0)$ which is immediate from 
the definition of the mappings $D_\bo \Phi$ and $\nu_\bo \Phi$.
\end{proof}
\begin{remark}
1) If $\Srn\times \R_+$ is equipped with the outer metric from the Euclidean metric on $\R^n\times\R_+$,
then the definable bi-Lipschitz homeomorphism of Proposition \ref{prop:bilip-iso} 
extends as a definable bi-Lipschitz homeomorphism $(X',\bdr X') \to (Y',\bdr Y')$ for the outer metrics. 
\\
2) Proposition \ref{prop:bilip-iso} is very specific to isomorphisms induced by 
an outer definable bi-Lipschitz homeomorphism between closed definable germs. Isomorphisms 
usually do not even give a well defined mapping between tangent cones as seen in Example 3) and 4) of Section 
\ref{Section:FCCaFCH}, even though by definition they induce a homeomorphism between the tangent links.
\\
3) Any definable bi-Lipschitz homeomorphism $\Phi: (X,\din)\to (X_0,\din)$, where $X$ is a normally embedded 
model of closed definable isolated singularity $X_0$ with connected link, 
induces a surjective definable homogeneous finite-to-one mapping $D_\bo \Phi:T_\bo X \to T_\bo X_0$ (see \cite{BeLy}).
Thus the mapping $\Phi$ extends as a morphism.
\end{remark}
If a normally embedded model $X$ of a closed definable isolated singularity $X_0$ with 
connected link at the origin $\bo$ is isomorphic to another definable germ  $X_1$, the
isomorphism induces a definable homeomorphism $X_1 \to X_0$ which extends as a morphism 
and thus induces a surjective continuous definable finite-to-one mapping $\So X_1 \to \So X_0$.  
These facts and Proposition \ref{prop:bilip-iso} justify the introduction of the following - now 
well defined - notion
\begin{definition}\label{def:SE-mod}
Let $X$ be a normally embedded model of a closed definable isolated singularity $X_0$ 
and with connected link at $\bo$.

i) A subset $X_1$ of some Euclidean space is called \em a simply embedded model of $X_0$ at the origin
 \em if it is isomorphic to any normally embedded model of the singularity $X_0$.

ii) A definable homeomorphism $X_1\to X_0$ is a \em simple embedding morphism \em if it is a morphism 
obtained as the composition of a bi-Lipschitz mapping $(X,\din) \to (X_0,\din)$ from a normally embedded 
model $X$ of $X_0$ and of an isomorphism $X_1 \to X$.  
\end{definition}
A very pleasant property of simply embedded models of an isolated singularity is the following
\begin{proposition}\label{prop:SE-iso-NE}
Any normally embedded model of a simply embedded model of a definable closed isolated singularity 
with connected link at the origin is isomorphic to the simply embedded model itself.
\end{proposition}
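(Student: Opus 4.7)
The plan is to show that the defining bi-Lipschitz homeomorphism $\psi:(X_2,\dou)\to(X_1,\din)$ from the definition of normally embedded model of $X_1$ extends to an isomorphism in the sense of Definition~\ref{def:morphism}. I fix notation: let $\phi:X_1\to X$ be the isomorphism to a normally embedded model $X$ of $X_0$ provided by the simply embedded hypothesis on $X_1$, so that $X_1 \cong X$.

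First I observe that $\psi$ is Lipschitz for the outer metrics: since $\dou^{X_1}\leq \din^{X_1}$, one has
\[
|\psi(x)-\psi(y)| \leq \din^{X_1}(\psi(x),\psi(y)) \leq C|x-y|,
\]
so $\psi(\bo)=\bo$ and $|\psi(x)|\leq C|x|$. Consequently the radial component of $\beta_m^{-1}\circ\psi\circ\beta_n$ extends continuously as zero on $\bdr X_2'$. The main task is to verify that the directional component $\psi(x)/|\psi(x)|$ admits a continuous extension to $\bdr X_2'$ and that the resulting extension is a homeomorphism of strict transforms, bijective on the boundary.

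The strategy is to factor the extension problem through $\phi$. Consider the composition $\Psi:=\phi\circ\psi:X_2\to X$. Both $X_2$ and $X$ are normally embedded, hence $(X_2,\dou)\simeq(X_2,\din)$ and $(X,\dou)\simeq(X,\din)$. I would show that $\Psi$ is definably bi-Lipschitz for the outer metrics; once this is established, Proposition~\ref{prop:bilip-iso} implies $\Psi$ is an isomorphism, and therefore $\psi = \phi^{-1}\circ \Psi$ is itself an isomorphism as a composition of isomorphisms. The natural route to the bi-Lipschitz character of $\Psi$ is to observe that both $X_2$ and $X$ present themselves as normally embedded witnesses of an inner-metric structure attached to $X_1$ (directly through $\psi$ on one side, and through $\phi$ combined with the normal embedding of $X$ on the other), and that uniqueness of normally embedded models up to definable outer bi-Lipschitz equivalence then forces the desired outer bi-Lipschitz character of $\Psi$.

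The main obstacle is precisely to establish this bi-Lipschitz character, because $\phi$ alone need not be inner bi-Lipschitz (indeed, Example 3 of Section~\ref{Section:FCCaFCH} already illustrates that an isomorphism between two normally embedded sets can grossly distort inner metrics). Overcoming this requires a careful analysis on the strict transforms: using that $\phi$, being an isomorphism, extends to a homeomorphism $\tilde\phi:X_1'\to X'$ of strict transforms, one has to lift the bi-Lipschitz data of $\psi$ through $\tilde\phi$ while verifying that the tangent link structure of $X_2$ (encoded in $\bdr X_2'$ and reflecting $(X_1,\din)$) matches, via the extension of $\Psi$, the tangent link structure of $X$ (encoded in $\bdr X'$). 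The connected link hypothesis on $X_0$ and the isolated singularity structure are essential here: they guarantee that the tangent link identifications extending the blowings-up are continuous and bijective, so that the strict transforms of $X_2$ and $X$ can indeed be identified via a homeomorphism which, combined with the outer Lipschitz bounds for $\psi$ and $\Psi$, yields the full outer bi-Lipschitz character of $\Psi$ and completes the proof.
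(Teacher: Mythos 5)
Your plan has a genuine gap, and it sits at the center of the argument. You propose to show that $\Psi = \phi\circ\psi$ is definably bi-Lipschitz for the outer metrics and then invoke Proposition~\ref{prop:bilip-iso}. But $\Psi$ need not be outer bi-Lipschitz, and indeed it typically is not. Your own observation gives that $\psi$ is outer Lipschitz because $\dou^{X_1}\leq\din^{X_1}$; however, $\phi$ is merely an isomorphism, which, as you acknowledge via Example~3 of Section~\ref{Section:FCCaFCH}, may grossly distort both metrics. The appeal to ``uniqueness of normally embedded models up to definable outer bi-Lipschitz equivalence'' does not repair this: $X_2$ is a normally embedded model of $(X_1,\din)$, while $X$ is a normally embedded model of $(X_0,\din)$, and these two inner metric spaces are only related by an \emph{isomorphism}, not by a bi-Lipschitz map. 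Normally embedded models are unique up to bi-Lipschitz only when they model the \emph{same} inner metric space, which is precisely what is not available here. Concretely, if $X_1$ is the cone over a non-normally-embedded link $L_1$ and $X=X_0$ is the cone over a normally embedded link $L_0$ homeomorphic to $L_1$, the cone-over-$g$ isomorphism $\phi:\cone(L_1)\to\cone(L_0)$ is not inner bi-Lipschitz whenever $(L_1,\din)$ and $(L_0,\din)$ are not bi-Lipschitz, and then $\Psi$ is not outer bi-Lipschitz either; in the opposite direction, $\phi^{-1}$ can fail to be Lipschitz at $\bo$ altogether, as in the inverse of $x\mapsto|x|x$. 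In short, your route would prove something stronger than what is true.

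The paper's proof avoids any bi-Lipschitz claim about $\Psi$. It instead uses the Bernig--Lytchak result to show that the inner bi-Lipschitz map $\psi$ (the paper's $\Phi$) extends as a \emph{morphism} whose induced boundary map $\So X_2\to\So X_1$ is continuous, surjective and finite-to-one; composing with the already-given isomorphism extension $\phi'$ gives a morphism $\Psi'\colon X_2'\to X'$ with a continuous, surjective, finite-to-one boundary map $\psi'$. The real content is then an injectivity argument for $\psi'$: assuming two distinct preimages of a boundary point, one builds, via equidistant definable curves in nested neighborhoods in the strict transforms, a third preimage, and iterating would produce infinitely many preimages, contradicting finiteness. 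Once $\psi'$ is bijective, $\Psi'$ is a continuous bijection from a compact space, hence a homeomorphism, whence $\Psi$ is an isomorphism and $\psi=\phi^{-1}\circ\Psi$ is too. So the missing idea in your draft is the finite-to-one structure coming from Bernig--Lytchak together with a geometric accumulation/counting argument on the strict transforms, rather than any metric control on $\Psi$.
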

\begin{proof}
Let $X_0$ be a closed definable isolated singularity with connected link at the origin.
Let $X_1$ be a simply embedded model of $X_0$ and $X$ be a normally embedded model of $X_0$ isomorphic 
to $X_1$. Let $Y$ be a normally embedded model of $X_1$ and let $\Phi:(Y,\din)\to (X_1,\din)$ be a 
definable bi-Lipschitz homeomorphism. It induces a surjective continuous definable finite-to-one mapping
$\phi:\So Y \to \So X_1$. Let $h:X_1\to X$ be the isomorphism and let write $h'$ for the homeomorphism
$X_1' \to X'$. We recall that the composed mapping $\Psi:= h\circ\Phi$ is a definable homeomorphism and 
extends as a morphism $\Psi'$. In particular the mapping $\psi'$ induced by the extension $\Phi'$ between the 
boundaries of the strict transforms of $Y$ and $X$ respectively, namely
$\psi' := h'\circ(\phi\times 0) :\bdr Y' \to \bdr X'$, is finite-to-one and surjective.
Let $x'$ be a boundary point of $X'$ and let $y_1',y_2'$ be two pre-images of $x'$ by $\psi'$.
Let $\calV$ be a small neighborhood of $x'$ in $X'$ and let $\calU$ be its pre-image by $\Psi'$.
Since the subset $Y'$ is contained in some product $\Sr^{p-1}\times \R_+$ we equip it with the outer metric     
of this product. Let $C_1$ and $C_2$ be two continuous definable curves lying in $\calU$ with 
respective starting point $y_1'$ and $y_2'$. We can assume these curve are parametrized 
as $t\to c_i (t)$ by the height function (projection onto the component $\R_+$ of the product).
Let $M(C_1,C_2)$ be the set of points $z'$ of the neighborhood $\calU$ such that $z'$ is equidistant to the points
of $C_1$ and $C_2$ with the same height as $z'$. This subset is definable, not empty and thus contains 
a continuous definable curve $C$ such that its starting point lies in the boundary $\calU \cap \bdr Y'$ and
is equidistant to $y_1'$ and $y_2'$. The image $\Psi'(C)$ of $C$ is contained in the open subset $\calV$ 
and starts at a point of the boundary $\calV \cap \bdr X'$. 
Let $(\delta_k)_k$ be a sequence of positive real numbers decreasing to $0$.
Let $\calV_k$ be the open subset of points of $X'$ with distance to $z'$ is strictly less than $\delta_k$.
Let $\calU_k$ be the pre-image $(\Psi')^{-1}(\calV_k)$. 
We construct a sequence of definable curves $(C_k)_{k >0}$ of the strict transform $Y'$ 
with starting point $y_k$ in the boundary $\bdr Y'\cap\calU_k$, equidistant from $y_1'$ and $y_2'$.  
We can assume that the sequence $(y_k)_k$ converges to $y'$. Since the continuous definable curve $\Psi'(C_k)$ 
starts at a point $z_k$ of the boundary $\bdr X'\cap \calU_k$, the sequence $(z_k)_k$ converges to $z'$ as 
$k$ goes to $\infty$. Thus $y'$ is a pre-image of $z'$ equidistant from $y_1'$ and $y_2'$.
Unless the boundary mapping $\psi'$ is injective, we can construct as finitely many pre-images 
of $x'$ as we want. Thus $\psi'$ is a homeomorphism, and consequently $X_1$ is isomorphic to any 
of its normally embedded model.
\end{proof}

Any normally embedded model of a closed definable isolated singularity is 
simply embedded. The converse is not true as seen in the following example in $\R^3$ 
\begin{center}
\vspace{4pt}
$X = \clos (\{x^2+y^2 = z^2, z>0\})$ and 
 $X_1 := \clos(\{y^2z^2 + x^4 = zx^3, z> 0\})$.
\vspace{4pt}
\end{center}
These cones are clearly isomorphic but $X_1$ is not normally embedded, since the compact plane curve
$\{y^2+x^4=x^3\}$, basis of the cone $X_1$, is not normally embedded.

\medskip
The following result although obvious makes also sense of the notion of isomorphism 
we have introduced in Section \ref{Section:FCCaFCH}.
\begin{proposition}\label{prop:SE-universal}
1) Any two simply embedded models of a same closed definable isolated singularity 
and with connected link are isomorphic. 
\\
2) If for $i=1,2$, the mappings $\Psi_i:X_i \to X_0$ are simple embedding morphisms of a closed definable isolated singularity 
$X_0$ of some Euclidean space and with connected link, then the mapping $\Psi_2^{-1}\circ \Psi_1:X_1\to X_2$ 
is an isomorphism.  
\end{proposition}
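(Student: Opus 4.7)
The plan is to reduce both parts to two facts: (i) the composition and inverses of isomorphisms are again isomorphisms, so isomorphisms form a groupoid on definable germs, and (ii) any two normally embedded models of $X_0$ are mutually outer-bi-Lipschitz and hence isomorphic by Proposition \ref{prop:bilip-iso}.

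For part 1, I would first pick, for each $i=1,2$, a normally embedded model $W_i$ of the singularity $X_0$. By the very definition of simply embedded model, there is an isomorphism $h_i : X_i \to W_i$. Each $W_i$ carries a definable inner-bi-Lipschitz homeomorphism $\phi_i : (W_i,\din) \to (X_0,\din)$, so $\phi_2^{-1}\circ \phi_1 : W_1 \to W_2$ is again definable inner-bi-Lipschitz. Normal embeddedness of both $W_1$ and $W_2$ makes this map also outer-bi-Lipschitz, and Proposition \ref{prop:bilip-iso} then promotes it to an isomorphism $\Theta : W_1 \to W_2$. The composition $h_2^{-1} \circ \Theta \circ h_1$ is then an isomorphism $X_1 \to X_2$, settling part 1.

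For part 2, I would unfold the definition of simple embedding morphism, writing $\Psi_i = \phi_i \circ h_i$ with $h_i : X_i \to W_i$ an isomorphism to some normally embedded model $W_i$ of $X_0$ and $\phi_i : (W_i,\din) \to (X_0,\din)$ definable inner-bi-Lipschitz. Then
\begin{center}
$\Psi_2^{-1} \circ \Psi_1 \,=\, h_2^{-1} \circ (\phi_2^{-1}\circ \phi_1) \circ h_1$,
\end{center}
and the middle factor is an isomorphism by the same normal-embeddedness argument as in part 1. The outer factors $h_1$ and $h_2^{-1}$ are isomorphisms by hypothesis (using closure of isomorphisms under inverses for $h_2^{-1}$), so composing the three yields the desired isomorphism $X_1 \to X_2$.

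The only point that is not completely formal is the verification that isomorphisms are closed under composition, but this is immediate from the definition in Section \ref{Section:FCCaFCH}: if $\Phi$ and $\Psi$ are morphisms with strict-transform extensions $\Phi'$ and $\Psi'$, then $\Psi \circ \Phi$ is again a morphism with extension $\Psi' \circ \Phi'$, and both injectivity and surjectivity of extensions pass through composition. I expect no substantive obstacle; the proposition is essentially a groupoid manipulation once Proposition \ref{prop:bilip-iso} is in hand.
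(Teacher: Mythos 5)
Your proof is correct and follows essentially the same route as the paper's (very terse) proof: reduce to normally embedded models, pass through Proposition \ref{prop:bilip-iso}, and close under composition and inversion of isomorphisms. The only place you add substance is in explicitly verifying that any two normally embedded models $W_1,W_2$ of $X_0$ are mutually isomorphic via the outer-bi-Lipschitz argument, whereas the paper treats this as immediate from the wording of Definition \ref{def:SE-mod} (``isomorphic to \emph{any} normally embedded model''); both readings land in the same place and your version is the more self-contained one.
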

\begin{proof}
Each model is isomorphic to any normally embedded model of the singularity $X_0$. Since the composition of two 
isomorphisms is again an isomorphism, we get the result. The second point just comes from the definition of a simple
embedding morphism.
\end{proof}
\noindent
{\bf Important note:} from now on we will only deal with definable closed isolated singularities {\bf thick} at 
the origin and with connected link. 
\begin{proposition}\label{prop:simple-dense}
Let $X$ be a normally embedded model of a closed definable isolated singularity thick at the origin 
and with connected link. 
There exists a smallest closed definable subset $\Sigma$ of the tangent link $\So X$ of 
codimension at least one such that for each connected component $Z$ of the complement $\So X \setminus\Sigma$, 
the topological type of the definable family of germs $((X',z))_{\{z\in Z\times0\}}$ is constant along the component $Z$
and the germ at any point $(z,0)$ of $Z\times 0$ of the pair $(X',\bdr X')$ is the germ of a topological submanifold with boundary.
\end{proposition}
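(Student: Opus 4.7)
The plan is to define $\Sigma$ intrinsically as the union of two closed definable subsets of $\So X$, and then to verify minimality, codimension, and the two claimed properties using the o-minimal definable trivialization theorems together with the isolated singularity hypothesis.

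Let $\Sigma_a$ consist of those $z \in \So X$ for which the germ of the pair $(X', \bdr X')$ at $(z, 0)$ is not the germ of a topological submanifold with boundary of $\Srn \times \R_+$, and let $\Sigma_b$ consist of those $z \in \So X$ admitting no definable open neighborhood $U$ over which the family of germs $((X', (z', 0)))_{z' \in U}$ is definably topologically trivial. Both subsets are closed, because their defining conditions are open (a manifold-with-boundary germ, or existence of a local trivialization, persists under a small perturbation of the basepoint), and both are definable, since their defining conditions are expressible uniformly in $z$ in the o-minimal structure. Set $\Sigma := \Sigma_a \cup \Sigma_b$.

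To establish codimension at least one: applying the o-minimal cell decomposition theorem together with definable Hardt triviality to the pair $(X', \bdr X')$ yields a finite definable stratification of $\bdr X'$ into connected $C^1$ submanifolds over each of which the germ of $X'$ is definably topologically trivial. Hence $\Sigma_b$ is contained in the union of strata of dimension strictly less than $d - 1 = \dim \So X$, of codimension at least one. For $\Sigma_a$, the thickness assumption combined with the fact that $X \setminus \bo$ is a $C^1$ submanifold of dimension $d$ allows the application of definable Hardt triviality to the radial map $x \mapsto |x|$ on $X \setminus \bo$ in a punctured neighborhood of $\bo$; lifting to the strict transform shows that on a dense open definable subset of $\So X$, the germ of $X'$ at $(z, 0)$ is definably homeomorphic to the product of a neighborhood of $z$ in $\So X$ with $[0, \ve)$, which is a manifold-with-boundary germ. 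Hence $\Sigma_a$ also has codimension at least one.

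For any connected component $Z$ of $\So X \setminus \Sigma$: avoidance of $\Sigma_a$ gives the manifold-with-boundary property at every point of $Z \times 0$, and avoidance of $\Sigma_b$ together with connectedness of $Z$ forces the topological type of the family of germs to be constant along $Z$. Minimality is immediate: any other closed definable $\Sigma'$ of codimension at least one satisfying the properties must contain $\Sigma_a$ (otherwise the manifold-with-boundary condition would fail at a point of the complement) and must contain $\Sigma_b$ (otherwise a point of $\Sigma_b$ would lie in a connected component of the complement where the family is topologically constant, contradicting the definition of $\Sigma_b$); hence $\Sigma \subset \Sigma'$. The most delicate step is the codimension bound on $\Sigma_a$: one must justify that thickness at $\bo$ together with the isolated singularity hypothesis forces a product structure on $X'$ near a generic boundary point, which requires combining the definable trivialization of the radial function on $X \setminus \bo$ with a limiting argument as the radius tends to zero, the subtlety being that such trivializations on a punctured neighborhood of $\bo$ need not extend to $\bo$ in general.
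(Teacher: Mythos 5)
Your strategy---defining $\Sigma$ intrinsically as $\Sigma_a\cup\Sigma_b$ and verifying the required properties---is a legitimate and in some ways cleaner route than the paper's, particularly for the ``smallest'' claim, which the paper treats rather quickly. Your verification that the two loci are closed and definable (via the finiteness of topological types in a definable family) and your codimension bound on $\Sigma_b$ via Hardt-type trivialization are both sound.

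The genuine gap is exactly where you point to it: the codimension bound on $\Sigma_a$. Applying Hardt triviality to the radial function $x\mapsto|x|$ on $X\setminus\bo$ trivializes the family of links only over an interval $(0,\ve)$, and the lift to the strict transform $X'$ therefore gives a product structure on $X'\cap(\Srn\times(0,\ve))$---but tells you nothing about the germ of $X'$ at a boundary point $(z,0)\in\bdr X'$, which is precisely where the manifold-with-boundary condition must be checked. Your closing caveat concedes this but offers no way through, so as written the argument does not establish that $\Sigma_a$ has codimension $\geq 1$. The paper closes this gap by a different route: it takes a definable Whitney stratification of the \emph{pair} $(X',\bdr X')$ (not of $X\setminus\bo$), so that the boundary $\bdr X'$ is itself a union of strata, and then invokes local topological triviality along an open stratum of $\bdr X'$ together with the \emph{normal embedding} hypothesis to force the transversal slice at a generic boundary point to be a single half-line germ. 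Note that normal embeddedness is doing real work here---without it one can have two sheets of $X$ tangent to the same direction (e.g.\ $y=\pm x^2$ in the plane), which produces a non-manifold boundary point on $\bdr X'$ along an open subset of $\So X$---and your sketch never uses it for $\Sigma_a$, which is a sign the argument cannot be completed as proposed. To repair your proof you should replace the radial Hardt argument by a Whitney stratification of $(X',\bdr X')$ adapted to $\bdr X'$ and then argue from normal embeddedness that at a point of an open stratum of $\bdr X'$ the local structure of $X'$ is a half-space germ.
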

\begin{proof}
The classical theorems of topological equisingularity ensures that the definable family of
germs $((X',x'))_{\{x'\in\bdr X'\}}$ has finitely many topological types, see \cite{Har,vdD,Sh,Co}.
In particular the compact parameter space $\bdr X'$ is partitioned into finitely many definable pieces
along which the topological type of the germs $(X',x')$ is constant (see \cite{Har,vdD,Sh,Co}).
This partitions the tangent link $\So X$ into finitely many connected definable topological submanifolds 
$(Z_i)_{i\in I}$ such that along each piece $Z_i$ the topological type of the family of germs $(X',x')_{\{x'\in Z_i\times 0\}}$
is constant. We call the elements of this partition strata. Let $Z$ be a stratum of maximal dimension,
then it is open in then tangent link $\So X$. Since a definable Whitney stratification of the pair 
$(X',\bdr X')$ exists \cite{vdDMi,vdD} and since the subset $X$ is normally embedded, the topological 
type of the strict transform $X'$ at any point of the stratum $Z\times 0$ is that of a topological submanifold with boundary.
We take $\Sigma$ to be the complement of the union of the strata of maximal dimension along which the 
topological type is constant.
\end{proof}
Since a simply embedded singularity is isomorphic to a normally embedded isolated singularity 
the strict transforms under the respective spherical blowings-up are homeomorphic, thus the 
topological nature of the germ of the strict transform at a point of the boundary is transported
by the extension homeomorphism. We can now introduce the following 
\begin{definition}\label{def:simple-dir}
Let $X_1$ be a simply embedded model of a definable closed isolated singularity $X_0$ 
thick at the origin and with connected link. 

i) A point of the boundary $\bdr X_1'$ of the strict transform $X_1'$ 
at which the germ of $X_1'$ is a topological submanifold with boundary is called 
a \em simple point. \em We denote by $\smp(\bdr X_1')$ the set of simple points of $\bdr X'$ 
and by $\ns (\bdr X_1')$ its complement in $\bdr X_1'$.

ii) A tangent direction corresponding to a simple point of the boundary $\bdr X_1'$ is called 
a \em simple tangent direction. \em
Let $\Sos X_1$ be the subset of \em simple tangent direction of $X_1$ at the origin. \em
The complement set of the simple tangent directions at the origin in the tangent 
link, is the \em locus of non-simple tangent directions \em and is denoted $\Sns X_1$.
\end{definition}
As already explained isomorphic simply embedded models will have definably homeomorphic simple tangent direction loci.
 
Since the boundary $\bdr X_1' = \So X_1 \times 0$ is an embedding of the tangent link $\So X_1$, we will only work 
with the tangent link. Note that we obviously find that $\smp (\bdr X_1') = \Sos X_1 \times 0$ and $\ns(\bdr X_1') = 
\Sns X_1 \times 0$.

\bigskip
We will see below and in the next two sections that the locus of non-simple tangent directions 
does carry a lot of information about the fast contracting homology classes.  
As our first step towards this goal we recall Brown Theorem about the existence of collar 
neighborhoods in metric spaces, since it will be very useful to define the notion of thick zone.

\smallskip
A subset $B$ of a topological space $Y$ is \em collared \em  if there exists a
homeomorphism $h:B\times [0,1[ \to \calU$ onto a neighborhood $\calU$ of $B$ such that
$h(b,0) = b$ for each point $b$ of $B$. A subset $B$ of the topological space $Y$ is \em locally collared \em
if it can be covered by a collection of open subsets $B_i$ of $B$ such that
each $B_i$ is collared. The main result of Brown \cite{Br} is
\begin{theorem}[\cite{Br}]\label{thm:Brown}
A locally collared subset of a metric space is collared.
\end{theorem}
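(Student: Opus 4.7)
The plan is to assemble the given local collars into a single global collar via a paracompactness-based inductive patching. Since $B$ is a subspace of a metric space, it is itself metric and hence paracompact in its induced topology; the given open cover $\{B_i\}_{i \in I}$ of $B$ by collared subsets therefore admits a locally finite countable refinement $\{V_n\}_{n \geq 1}$. Each $V_n$ inherits a local collar $h_n : V_n \times [0,1[\, \to W_n \subset Y$ from whichever $B_i$ contains it, and this countable locally finite cover will drive the induction.

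The technical core of the argument is a two-set patching lemma: if $A$ and $A'$ are open subsets of $B$ that are each collared in $Y$, then $A \cup A'$ is collared in $Y$. To prove it, I would pick a continuous function $\phi : A \cup A' \to [0,1]$ that equals $1$ on a closed neighborhood in $B$ of $A\setminus A'$ contained in $A$, and equals $0$ on a closed neighborhood of $A'\setminus A$ contained in $A'$, then define the combined collar so that it equals $h_A$ where $\phi \equiv 1$, equals $h_{A'}$ where $\phi \equiv 0$, and in between transfers trajectories between the two collar structures via the change-of-collar homeomorphism $h_{A'}\circ h_A^{-1}$ that is available on the intersection $W_A \cap W_{A'}$. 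Continuity is automatic from the construction, and local injectivity in the transition region together with invariance of domain shows that the combined map is a homeomorphism onto an open neighborhood of $A \cup A'$.

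With the patching lemma in hand, I would proceed by induction on $n$ to collar each finite union $U_n := V_1 \cup \cdots \cup V_n$. The main obstacle is then passing from the finite stages to the infinite union $B = \bigcup_n V_n$, since the admissible width of the collar will in general shrink at each step, and one has to control these shrinkings so that the pointwise limit is still a homeomorphism of $B \times [0,1[$ onto an open neighborhood of $B$ in $Y$. This is precisely where local finiteness of $\{V_n\}$ is essential: any point $b \in B$ lies in only finitely many $W_n$, so its neighborhood is affected by only finitely many stages of the induction, the construction stabilizes locally after a finite stage, and the homeomorphism property verified at that finite stage propagates to the limit. This convergence step is the delicate part of Brown's original argument and the place I would expect to have to work hardest.
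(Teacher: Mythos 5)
The paper does not prove this theorem; it is Brown's 1962 collaring theorem, cited from \cite{Br}, and the paper's only addition is the remark immediately after the statement that Brown's argument carries through definably for definable data. So there is no in-paper proof to compare against --- the comparison is implicitly with Brown's original argument.

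Your high-level skeleton --- reduce via paracompactness to a countable locally finite cover, prove a two-set patching lemma, then run an induction whose convergence is controlled by local finiteness --- is the correct shape and does match Brown's strategy. But the content of the theorem lives almost entirely in the two-set patching lemma, and your sketch of it does not close that gap. A small point first: the composition $h_{A'}\circ h_A^{-1}$ does not typecheck, since $h_A^{-1}$ lands in $A\times[0,1[$, which is not the domain of $h_{A'}$; the change-of-coordinates map you want is $h_{A'}^{-1}\circ h_A$, defined between open subsets of $A\times[0,1[$ and $A'\times[0,1[$. Now the substantive problem: this change-of-coordinates map has no reason to respect the product structure. It can send the collar ray $\{a\}\times[0,\varepsilon[$ to an arc that is not a ray, and can also bend in the $[0,1[$ direction. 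Interpolating $h_A$ and $h_{A'}$ through it by a cutoff function $\phi$ therefore produces a candidate map whose injectivity is genuinely in doubt. A concrete test: take $Y$ the closed upper half-plane, $B=\R$, $h_A(a,t)=(a,t)$, $h_{A'}(a,t)=(a+t,t)$; the natural interpolated map sends $(a,t)$ to $(a+\phi(a)t,\,t)$ (or $(a+(1-\phi(a))t,\,t)$, depending on which end carries which collar), and this fails to be injective in $a$ as soon as $t\,|\phi'|>1$ somewhere. In the generality of Brown's theorem --- arbitrary metric spaces, no regularity whatsoever on the transition map, no quantitative bound on $\phi$ that survives uniformly across the cover --- this is not a removable nuisance, and the sentence "continuity is automatic from the construction, and local injectivity in the transition region together with invariance of domain shows that the combined map is a homeomorphism" is asserting precisely what needs to be proved. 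Brown's actual argument does not interpolate two conflicting product structures head-on; it sidesteps the issue. You would need either to recover that device or to supply a genuine substitute before the patching lemma, and hence the induction built on it, becomes a proof.
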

Reading the simple and elegant proof of this theorem, we observe that this result 
can be achieved definably if we start with definable data. 

Brown Theorem provides a pleasant property of simply embedded models
of a given thick isolated singularity whose tangent link consists only of 
simple tangent directions:   
\begin{lemma}\label{lem:loc-conic-homeo-cone}
Let $X_1$ be a simply embedded model of a closed definable isolated singularity of some Euclidean 
space, thick at the origin and with connected link $\bo$. If the tangent link $\So X_1$ is a topological 
submanifold, then it consists only of simple tangent directions if and only if the subset $X_1$ is 
isomorphic to its tangent cone $T_\bo X_1$.
\end{lemma}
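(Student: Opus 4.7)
The plan is to dispatch the easy direction directly from the structure of the tangent cone's strict transform, and in the other direction to convert a definable collar near $\bdr X_1'$ into an isomorphism between germs through the spherical blowing-up machinery.

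For the \emph{only if} direction, assume $X_1$ is isomorphic to $T_\bo X_1$ and let $\Psi':X_1'\to(T_\bo X_1)'$ denote the extension of such an isomorphism; it is a definable homeomorphism sending $\bdr X_1'$ onto $\bdr(T_\bo X_1)'=\So X_1\times 0$. Because the strict transform of the cone is $(T_\bo X_1)'=\So X_1\times\R_+$ and $\So X_1$ is a topological submanifold by hypothesis, the germ of $(T_\bo X_1)'$ at every boundary point $(\bu,0)$ is of the form $U\times[0,\ve)$ for some topological-manifold chart $U$ of $\So X_1$ around $\bu$; in particular every such germ is a topological manifold with boundary. The homeomorphism $\Psi'$ transports this property to $X_1'$, so every boundary point of $X_1'$ is simple and $\So X_1=\Sos X_1$.

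For the converse, suppose $\So X_1=\Sos X_1$ and $\So X_1$ is a topological submanifold, so that $X_1'$ is a topological manifold with boundary on some neighborhood of $\bdr X_1'$. Then $\bdr X_1'$ is locally collared in $X_1'\subset\Sr^{n-1}\times\R_+$, and the definable avatar of Brown's Theorem~\ref{thm:Brown} yields a definable homeomorphism $\Phi:\bdr X_1'\times[0,1)\to\calU$ onto an open neighborhood $\calU$ of $\bdr X_1'$ in $X_1'$, with $\Phi(\bu,0)=(\bu,0)$. Using the canonical identification $\bdr X_1'=\So X_1\times 0=\bdr(T_\bo X_1)'$, the product $\bdr X_1'\times[0,1)$ embeds as the open subset $\So X_1\times[0,1)$ of the strict transform $(T_\bo X_1)'=\So X_1\times\R_+$. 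Composing, I obtain a definable homeomorphism $\Psi':=\Phi^{-1}:\calU\to\So X_1\times[0,1)\subset(T_\bo X_1)'$ which restricts to the identity on $\bdr X_1'$.

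To finish, I push $\Psi'$ down to an isomorphism of germs. Since $X_1'\cap(\Sr^{n-1}\times[0,\ve])$ is compact and shrinks onto $\bdr X_1'$ as $\ve\to 0$, for some sufficiently small $\ve_0>0$ this slice is contained in $\calU$. Define $\Psi(\bo):=\bo$ and $\Psi:=\beta\circ\Psi'\circ\beta^{-1}$ on $(X_1\cap\bB(\bo,\ve_0))\setminus\bo$. Because the spherical blowing-up is injective off the exceptional divisor and because $\Psi'$ preserves the boundary, $\Psi$ is well-defined and definable; continuity at $\bo$ follows because $|x|\to 0$ forces $\beta^{-1}(x)$ to accumulate on $\bdr X_1'$, whose $\Psi'$-image lies in $\bdr(T_\bo X_1)'$ and blows down to $\bo$. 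The same construction applied to $(\Psi')^{-1}$ produces a two-sided inverse, so $\Psi$ is a definable homeomorphism of germs whose lift to the strict transforms is precisely $\Psi'$; by Definition~\ref{def:morphism}, $\Psi$ is the sought isomorphism between $X_1$ and $T_\bo X_1$. The only delicate step is this compactness argument ensuring that Brown's collar absorbs a full germ of $X_1'$ at its boundary; once that is secured, the descent via $\beta$ is essentially formal.
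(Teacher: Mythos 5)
Your proof is correct and follows essentially the same path as the paper's: dispose of the forward direction by transporting the manifold-with-boundary property along the strict-transform homeomorphism, then invoke the definable version of Brown's collaring theorem and descend through the spherical blowing-up for the converse. The extra care you take (the compactness argument securing that the collar absorbs a full germ, the continuity check at $\bo$, and the construction of the two-sided inverse) fills in routine steps that the paper leaves implicit.
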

This simple result is the basis of our main results of Sections \ref{Section:MR}
and \ref{Section:TaToTIS}.
\begin{proof}[Proof of Lemma \ref{lem:loc-conic-homeo-cone}.]
If $X_1$ isomorphic to its tangent cone, then its tangent link consists only of
simple points.

\smallskip
We equip the strict transform $X_1'$ and its boundary $\bdr X_1'$ with the outer metric of the
metric space $\Srn \times \R_+$ coming from its inclusion in $\Rn\times\R_+$.

Brown Theorem implies the existence of a positive real number $\ve$ and of a definable
homeomorphism $\Phi:\So X_1 \times [0,\ve[ \to \calV$ where $\calV$ is an open neighborhood of
$\bdr X_1'$ in $X_1'$ and is also definable. Moreover we also know that $\Phi(\bu,0) = (\bu,0)$.
Composing with the blowing-down mappings at the target (of $\Phi$) and at the source (of $\Phi$) 
we obtain a definable homeomorphism
\begin{center}
$\beta\circ\Phi\circ\beta^{-1}:T_\bo X_1 \setminus \bo \to X_1 \setminus\bo$,
\end{center}
which extends continuously and definably at $\bo$.
\end{proof}
\noindent
{\bf Notation:} The \em $\ve$-cone \em of a subset $L$ of the unit sphere $\Srn$ is the subset of $\Rn$ defined as 
\begin{center}
\vspace{4pt}
$\cone(L,\ve) := \clos(\{x \in \Rn\setminus\bo: \dist (\nu (x),L) < \ve\})$.
\vspace{4pt}
\end{center} 
With the same principles as those of the second part of the proof of Lemma \ref{lem:loc-conic-homeo-cone}, 
we deduce the following
\begin{corollary}\label{cor:isom-tgt-cone}
Let $X_1$ be a simply embedded model of a closed definable isolated singularity of some Euclidean space, 
thick at the origin and with connected link $\bo$. For a positive real number $\ve$ small enough, each 
connected component of the complement $X_1 \setminus \cone(\Sns X_1,\ve)$ is isomorphic to its tangent cone. 
\end{corollary}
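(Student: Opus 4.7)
The plan is to adapt the argument of Lemma \ref{lem:loc-conic-homeo-cone} componentwise, after cutting out a generic conical neighborhood of the non-simple direction locus.

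First, I would choose $\ve$ generically so that $L_\ve := \So X_1 \cap \{u : \dist(u, \Sns X_1) \geq \ve\}$ is a well-behaved topological submanifold with boundary of $\Sos X_1$. The distance function $u \mapsto \dist(u, \Sns X_1)$ is continuous and definable on the compact tangent link, and its restriction to the open definable topological manifold $\Sos X_1$ has only finitely many critical values (by a definable Sard-type argument). Hence for small positive $\ve$ outside these critical values, $L_\ve$ is a compact definable topological submanifold with boundary contained entirely in $\Sos X_1$.

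Next, I would identify the components. For such $\ve$, the tangent link at $\bo$ of the closure of each connected component $K$ of $X_1 \setminus \cone(\Sns X_1, \ve)$ equals a connected component $L^K$ of $L_\ve$. Since $L^K \subset \Sos X_1$, the strict transform $X_1'$ is a topological manifold with boundary near $L^K \times 0$. Restricting to the preimage of the complement of the cone, the strict transform $K'$ has the local structure of a manifold with boundary away from $\partial L^K \times 0$, and of a manifold with corners at points of $\partial L^K \times 0$. Since the quadrant $[0,\infty)^2$ is homeomorphic to the half-plane $[0,\infty) \times \R$, the corner structure is still topologically a manifold with boundary; in particular, $L^K \times 0$ is locally collared in $K'$.

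Then I would apply Theorem \ref{thm:Brown}. Since $L^K \times 0$ is locally collared in $K'$, Brown's theorem (applied definably, as noted after its statement) provides a definable homeomorphism $\Phi: L^K \times [0, \delta[ \to \calV$ onto an open neighborhood $\calV$ of $L^K \times 0$ in $K'$, with $\Phi(\bu, 0) = (\bu, 0)$. After reparametrizing $\Phi$ to respect the height function (as implicit in the proof of Lemma \ref{lem:loc-conic-homeo-cone}), the composition $\beta \circ \Phi \circ \beta^{-1}$ yields a definable homeomorphism $T_\bo K \setminus \bo \to K \setminus \bo$ extending continuously and definably at $\bo$; this lifts by construction to a homeomorphism between the strict transforms, giving the required isomorphism $T_\bo K \to K$.

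The main obstacle is the reparametrization of the collar so that the composition descends to a morphism (extending to the strict transforms), rather than merely a definable homeomorphism of the complements. The genericity choice of $\ve$ ensures that the corner points at $\partial L^K$ behave topologically like smooth boundary points; this reduces the reparametrization to a one-dimensional rescaling in the $t$-variable, handled exactly as in the proof of Lemma \ref{lem:loc-conic-homeo-cone}.
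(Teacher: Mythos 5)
Your proposal follows essentially the same route the paper does. The paper gives no explicit proof of Corollary~\ref{cor:isom-tgt-cone}; it simply says it follows ``with the same principles as those of the second part of the proof of Lemma~\ref{lem:loc-conic-homeo-cone},'' which is exactly the componentwise Brown collar argument you carry out. The two extra points you flesh out — that $\ve$ should be chosen so that the level set of $u\mapsto\dist(u,\Sns X_1)$ slices $\Sos X_1$ along a definable hypersurface (in the o-minimal setting this holds for all but finitely many small $\ve$, so ``small enough'' in the paper's phrasing already covers it, by finiteness of topological types of the definable family $L_\ve$), and that the corner created at $\partial L^K\times 0$ in the strict transform is still topologically a collar (quadrant $\cong$ half-plane) — are precisely the details the paper leaves implicit, and they are correctly handled. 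This is a faithful reconstruction rather than a different proof.
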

%
%
%
%
%
%
%
%
%
%
%
%
%
%
%
%
%
%
%
%
%
%
%
%
%
%
%
%
%
%
\section{Collapsing homology locus of thick isolated 
singularities}\label{Section:MR}
We present here the first version of the main result of this paper: Theorem \ref{thm:main} states that
the fast contracting homology of a simply embedded model $X_1$ of a closed definable thick isolated singularity 
$X_0$ with connected link can be realized in a thin zone of $X_1$ organized around the non-simple tangent 
directions. Prior to getting there, we prove Theorem \ref{thm:FF-polyhedra} which guarantees that cones do 
not have fast contracting homology, fact absolutely essential to show Theorem \ref{thm:main}

\medskip
The next result is a polyhedral version of 
Federer-Flemming Theorem on the triviality of cycles of
small volume in a smooth compact Riemannian manifold \cite{FF}.
By $\vol_k$ is meant the $k$-dimensional volume taken with the outer metric.

\begin{theorem}\label{thm:FF-polyhedra} Let $\Gamma$ be a compact polyhedron defined as the union of the simplices
of a finite set $T$.
Let $\xi$ be a definable  $k$-cycle in $\Gamma$ such that for any positive $\ve$ small enough there exists a
homologous definable $k$-cycle $\xi_{\ve}$ in $\Gamma$ whose $k$-dimensional volume
$\vol_k(\supp(\xi_{\ve}))$ is smaller than $\ve$. Then, $\xi$ bounds a definable $(k+1)$-chain of $\Gamma$.
\end{theorem}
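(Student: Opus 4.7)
The plan is to prove Theorem \ref{thm:FF-polyhedra} by a Federer--Fleming style deformation argument, adapted to the definable polyhedral setting. The key idea is to construct, for each small $\ve$, a homology between $\xi_\ve$ and a simplicial $k$-cycle $\tilde\xi_\ve$ supported in the $k$-skeleton of $T$, whose volume is controlled by a universal multiple of $\vol_k(\supp(\xi_\ve))$. Once such a simplicial representative exists with arbitrarily small coefficients, the finiteness of the simplicial chain complex of $T$ forces the class $[\xi]=[\xi_\ve]=[\tilde\xi_\ve]$ to vanish, and the simplicial bounding $(k+1)$-chain lifts to a definable $(k+1)$-chain bounding $\xi$.

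I would induct on $d := \dim \Gamma - k$. The base case $d = 0$ is handled directly: in a $k$-dimensional polyhedron the $k$-cycles are, after a definable simplicial approximation, formal sums $\sum a_\sigma \sigma$ over the top-dimensional simplices, with volume $\sum |a_\sigma|\vol_k(\sigma)$. Smallness of $\ve$ forces integer coefficients to vanish for $\ve$ below the minimum simplex volume; for coefficients in $\Q$ or $\R$, the coefficients of $\tilde\xi_\ve$ tend to $0$ while representing the fixed class $[\xi]$, and the boundary subgroup being closed in the finite-dimensional space of simplicial $k$-chains forces $[\xi]=0$.

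For the inductive step, I would peel off one dimension at a time. Given an $n$-simplex $\sigma \in T$ with $n = \dim \Gamma > k$, and a point $p$ in its relative interior with $p \notin \supp(\xi_\ve)$, the radial projection $r_p : \sigma \setminus \{p\} \to \bdr\sigma$ together with the straight-line homotopy from the identity to $r_p$ produces a definable $(k+1)$-chain realizing a homology from $\xi_\ve \cap \sigma$ to $r_p(\xi_\ve \cap \sigma)$. A Fubini-style averaging over the choice of $p$ inside $\sigma$ shows that one can choose $p$ definably so that $\vol_k(r_p(\xi_\ve \cap \sigma)) \leq C_\sigma \cdot \vol_k(\xi_\ve \cap \sigma)$, where $C_\sigma$ depends only on the geometry of $\sigma$; the existence of such a definable $p$ is guaranteed by the definable Sard theorem applied to the relevant definable integrand. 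Performing this operation on each top-dimensional simplex of $T$ (which are interior-disjoint) yields a $k$-cycle in the $(n-1)$-skeleton whose volume is bounded by $C \cdot \ve$ with $C$ depending only on $T$. Applying the induction hypothesis to this polyhedron of dimension $n-1$ closes the argument.

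The main obstacle will be to carry out the radial-projection step in a genuinely definable fashion while controlling the volume inflation. The classical Federer--Fleming argument uses measure-theoretic averaging to locate a good projection point; in the o-minimal framework this must be replaced by definable Sard and definable integration estimates, and one must verify that the constants $C_\sigma$ and intermediate chains remain definable and uniform over $\xi_\ve$. A secondary delicacy is to handle the coefficient group uniformly: over $\Z$ and $\Z/2$ the coefficient smallness forces them to vanish once $\ve$ is below a computable threshold, whereas over $\Q$ and $\R$ one instead invokes the closedness of the image of the boundary map in the finite-dimensional space of simplicial $k$-chains to conclude that the limit class is trivial.
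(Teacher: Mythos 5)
Your proposal follows essentially the same route as the paper: Federer--Fleming deformation by radial projection from interior points of top simplices, iterated down through the skeleta of $T$, with a volume estimate at each step to guarantee that the small-volume hypothesis can be propagated. There are two points worth flagging.

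First, the paper controls the volume inflation by asserting a \emph{uniform} Lipschitz constant $L$ for the radial projections $\Phi_{\sigma,x_0}\colon \sigma\setminus\{x_0\}\to\partial\sigma$ over all interior points $x_0$; as stated this is false (the differential of $\Phi_{\sigma,x_0}$ blows up like $|x-x_0|^{-1}$ near $x_0$, and the support of $\xi_\ve$ is not kept away from $x_0$). Your replacement of that claim by the classical Federer--Fleming averaging argument --- integrate $\vol_k\bigl(r_p(\xi_\ve\cap\sigma)\bigr)$ over $p\in\sigma$, use a Fubini/coarea bound to show the average is $\leq C_\sigma\,\vol_k(\xi_\ve\cap\sigma)$, and then pick a good $p$ by Chebyshev --- is the correct fix and is in fact more rigorous than what is printed. (The invocation of a ``definable Sard theorem'' is not quite the right tool; what you need is definable choice together with a mean-value inequality, but this is cosmetic.)

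Second, a bookkeeping issue: as phrased, your induction hypothesis (``$[\xi]$ is represented by cycles of arbitrarily small volume in $\Gamma$'') does not automatically descend to $\Gamma^{(n-1)}$, because the projected cycles $\xi'_\ve$ for different $\ve$ are homologous to $\xi$ in $\Gamma$ but need not be mutually homologous in $\Gamma^{(n-1)}$. The paper avoids this by fixing a single $\ve$ below an explicit threshold and iterating the projection on that one cycle down to the $(k-1)$-skeleton, where any $k$-cycle is degenerate. Your induction should likewise be reformulated as the quantitative lemma ``there is $\ve_0(\Gamma)>0$ such that every definable $k$-cycle of volume $<\ve_0$ bounds'', to which the theorem's hypothesis reduces immediately; with that rephrasing your base case and inductive step go through as you describe.
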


\begin{proof} The idea of the proof is the similar to that of the main result of \cite{BiBr2}.
We are going to show that the cycle $\xi$ is homologous to a definable $k$-cycle
living in the $(k-1)$-skeleton of the polyhedron $\Gamma$. The dimension of $\Gamma$ is $d$.

\smallskip
First, observe there exists a positive real number $L$
such that: For any simplex $\sigma$ of $T$ and any interior point $x_0$ of $\sigma$, the mapping
$$\Phi_{\sigma,x_0}:\sigma\setminus\{x_0\}\rightarrow\partial\sigma$$
defined as the intersection point of the half-line $\R_+ (x-x_0)$ with the boundary $\partial\sigma$,
is a $L$-Lipschitz mapping. The existence of such uniform constant $L$ is due to the fact that
the family $(\Phi_{\sigma,x})_{x\in\sigma,\sigma\in T}$ is a semi-algebraic family of piecewise
linear mappings over the same compact semi-algebraic source.

Let $\ve$ be positive and strictly smaller than
$$\frac{1}{L^{(d-k)k}}\inf\{\vol_{k}(\sigma) \ : \ \sigma \mbox{ is a $k$-simplex of } T\}.$$
Let $\xi_{\ve}$ be a definable $k$-cycle in $\Gamma$ homologous to $\xi$ and whose
volume $\vol_k(\supp(\xi_{\ve}))$ is smaller than $\ve$. Let $\sigma$ be a $d$-simplex of $T$.
Since the interior of the simplex $\sigma$ is not contained in the support of the cycle $\xi_{\ve}$,
there exists a point $x_0$ in the interior of $\sigma$ such that the support of $\xi_\ve$ is contained in
$\sigma \setminus\{x_0\}$. So, we find
$$\vol_k(\Phi_{\sigma,x_0}(\xi_\ve\cap\sigma))\leq L^k \vol_k(\supp(\xi_\ve)\cap\sigma).$$
Hence, the $k$-cycle $\xi_{\ve}$ is homologous to a definable $k$-cycle $\xi_{d-1}$ which lives on 
the $d-1$-skeleton of the polyhedron $\Gamma$ and whose volume satisfies
$$\vol_k(\supp(\xi_{d-1}))\leq L^k \vol_k(\supp(\xi_{\ve})).$$
If $d-1=k-1$ the proof ends here. If not, since the volume of the support of the cycle $\xi_{d-1}$ is small, 
we are dealing with a $(d-1)$-simplex $\sigma$ of $T$, whose interior is not contained in the support of $\xi_{d-1}$,
hence there exists a point $x_1$ in the interior of $\sigma$ such that the support of $\xi_{d-1}$ is contained in
$\sigma\setminus\{x_1\}$. We deduce the following inequality
$$\vol_k(\Phi_{\sigma,x_1}(\xi_{d-1}\cap\sigma))\leq L^k \vol_k(\supp(\xi_{d-1})\cap\sigma).$$
Hence, the cycle $\xi_{d-1}$ is homologous to a definable $k$-cycle $\xi_{d-2}$ which lies in
the ($d-2$)-skeleton of $\Gamma$ and satisfies
$$\vol_k (\supp(\xi_{d-2}))\leq L^k \vol_k(\supp(\xi_{d-1}))\leq L^{2k} \vol_k(\supp(\xi_{\ve})).$$
Iterating this process, we conclude that the cycle $\xi$ is homologous to a definable $k$-cycle $\xi_k$ living
in the $k$-skeleton of the polyhedron $\Gamma$ and satisfies the following inequality
$$\vol_k(\supp(\xi_k))\leq L^{(d-k)k} \vol_k(\supp(\xi_{\ve})).$$
Since
$$L^{(d-k)k} \vol_k(\supp(\xi_{\ve}))<\inf\{\vol_k(\sigma) \ : \ \sigma\in T \ \mbox{is a k-simplex}\},$$
given a $k$-simplex $\sigma$ of  $T$, its interior is not contained in the support of the cycle $\xi_k$, thus there
exists $x_k$ a point in the interior of $\sigma$ such that the support of $\xi_k$ is contained in
$\sigma\setminus\{x_k\}$. Considering the projection mapping
$$\Phi_{\sigma,x_k}:\sigma\setminus\{x_k\}\rightarrow\partial\sigma$$
where $\sigma$ is a $k$-simplex of $T$ and $x_k$ is a point in the interior of $\sigma$ such that the support of
the cycle $\xi_k$ is contained in $\sigma\setminus\{x_k\}$.
We thus obtain a definable $k$-cycle $\xi_{k-1}$ homologous to $\xi$ and contained in the ($k-1$)-skeleton of 
the polyhedron $\Gamma$. Necessarily $\xi$ was trivial.
\end{proof}
We deduce the following two corollaries.
\begin{corollary} \label{cor:FF-defin} Let $\Gamma$ be a compact definable set of some Euclidean space. 
Let $\xi$ be a definable $k$-dimensional cycle in $\Gamma$ such that for any positive $\ve$ there exists 
a definable $k$-dimensional cycle $\xi_{\ve}$ in the set $\Gamma$ homologous to $\xi$ and whose volume 
$\vol_{k}(\supp(\xi_{\ve}))$ is smaller than $\ve$. Then the cycle $\xi$ bounds a definable 
$(k+1)$-chain in $\Gamma$.
\end{corollary}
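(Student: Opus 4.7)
\medskip\noindent
\textbf{Proof proposal.} The plan is to reduce Corollary \ref{cor:FF-defin} to the polyhedral statement Theorem \ref{thm:FF-polyhedra} by transporting the problem along a sufficiently tame definable triangulation of $\Gamma$. The point is that the conclusion (``$\xi$ bounds a definable $(k+1)$-chain'') is a purely topological/definable statement, hence invariant under any definable homeomorphism; the only subtlety is that the hypothesis is metric (small $k$-volume), and one must therefore be careful about which triangulation to use so that the volume bound survives transport.

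First I would invoke a \emph{definable Lipschitz triangulation} of the compact definable set $\Gamma$: a definable homeomorphism $h\colon |T|\to\Gamma$, where $|T|$ is a compact polyhedron in some Euclidean space realized as the union of the simplices of a finite set $T$, such that $h$ is bi-Lipschitz when restricted to the (closure of) each simplex of $T$. Such a triangulation exists for definable sets in the o-minimal structure $\calM$ (as is the case in the semi-algebraic setting, by the work of Mostowski, Parusi\'{n}ski and Valette, and by a pancake-type argument as alluded to in Section \ref{Section:background}). Since $|T|$ is compact and covered by finitely many simplices, the piecewise Lipschitz property of $h$ and $h^{-1}$ yields a uniform constant $M>0$ such that, for every definable set $A\subset\Gamma$,
\begin{equation*}
\vol_k\!\bigl(h^{-1}(A)\bigr)\;\leq\;M\cdot\vol_k(A).
\end{equation*}

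Next I would pull back the data. Set $\widetilde\xi:=h^{-1}_*(\xi)$ and $\widetilde\xi_\ve:=h^{-1}_*(\xi_\ve)$; these are definable $k$-cycles in the compact polyhedron $|T|$, and $\widetilde\xi_\ve$ is homologous to $\widetilde\xi$ because $h^{-1}$ is a homeomorphism. The volume comparison above gives
\begin{equation*}
\vol_k\bigl(\supp(\widetilde\xi_\ve)\bigr)\;\leq\;M\cdot\vol_k\bigl(\supp(\xi_\ve)\bigr)\;<\;M\ve,
\end{equation*}
so for $\ve$ small enough $\vol_k(\supp(\widetilde\xi_\ve))$ is arbitrarily small. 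Therefore the hypothesis of Theorem \ref{thm:FF-polyhedra} is met for the cycle $\widetilde\xi$ in the polyhedron $|T|$, and we conclude that there exists a definable $(k+1)$-chain $\widetilde\eta$ in $|T|$ with boundary $\widetilde\xi$. Pushing forward by $h$ yields the definable $(k+1)$-chain $\eta:=h_*(\widetilde\eta)$ in $\Gamma$ with boundary $\xi$, which is what we wanted.

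The step I expect to be the main obstacle is the first one: producing a definable triangulation of $\Gamma$ that is piecewise bi-Lipschitz. The usual o-minimal triangulation theorem only yields a definable homeomorphism, under which a small-volume cycle can acquire arbitrarily large volume, and so the hypothesis would not transfer. One therefore has to appeal to the (finer) existence of definable Lipschitz triangulations, or equivalently to a pancake decomposition of $\Gamma$ together with a compatible definable triangulation on each pancake; the rest of the argument is then essentially formal.
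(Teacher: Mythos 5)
Your argument is correct, and it is surely the intended one: the paper states Corollary \ref{cor:FF-defin} with no proof at all (``We deduce the following two corollaries''), so the reduction to Theorem \ref{thm:FF-polyhedra} via a definable triangulation with controlled metric distortion is exactly what must be filled in. You have also correctly isolated the single non-formal step: an ordinary definable triangulation of $\Gamma$ will not do, because pulling back a small-volume cycle through a merely continuous definable homeomorphism gives no volume control; one genuinely needs a \emph{Lipschitz} (or at least piecewise bi-Lipschitz) definable triangulation so that the bound $\vol_k(h^{-1}(A))\leq M\,\vol_k(A)$ holds with a uniform $M$. The one thing worth making explicit in a polished write-up is the reference for this: Valette's Lipschitz triangulations \cite{Va1} are proved in the polynomially bounded setting, whereas the paper works in an arbitrary o-minimal structure $\calM$; Paw\l ucki's Lipschitz cell decomposition \cite{Paw} does hold in full generality but produces cells rather than simplices, so a short additional argument (triangulating each Lipschitz cell compatibly, or equivalently a pancake-plus-triangulation argument as you suggest) is needed to pass to an honest polyhedron. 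This is a gap the paper itself leaves silent, so it is not a flaw in your proposal, but since you flagged it as the main obstacle it deserves a precise citation or a sentence of justification.
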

Let $Z$ be a closed definable subset of $\Rn$ containing the origin $\bo$ and of dimension $d$
at the origin. Let $Z(\ve)$ be the intersection $Z\cap \bB^n (\bo,\ve)$. 
We recall that the normalized $d$-volume $\ve^{-d} \vol_d (Z(\ve))$ tends to $0$ as the radius $\ve$ goes to $0$ 
if and only if the tangent cone $T_\bo Z$ has dimension strictly less than $d$.  
\begin{corollary}\label{cor:cone-no-hom}
A germ of a closed definable subset of some Euclidean space isomorphic 
to a cone has trivial fast contracting homology.
\end{corollary}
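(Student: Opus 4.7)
The plan is to first reduce to the case where the germ is a genuine cone, and then use the scaling structure of the cone together with the definable Federer--Flemming statement (Corollary~\ref{cor:FF-defin}) applied to the link.

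By Proposition~\ref{prop:homom-fc-homol}, an isomorphism induces an isomorphism of the fc-homology groups, so it suffices to prove $\calH_k(C,\bo)=0$ for a cone $C=\cone(L)$ over a compact definable link $L$, and for every degree $k\geq 1$. Let $\xi$ be a fc-cycle supported in $C\setminus\bo$ and let $\eta$ be a definable $(k+1)$-chain in $C$ with $\bdr\eta=\xi$, whose support is thin at $\bo$ and contains $\bo$. We must exhibit a definable chain in $C\setminus\bo$ bounded by $\xi$. The radial deformation retract $\rho:C\setminus\bo\to L$, $x\mapsto x/|x|$, is a definable homotopy equivalence, so it is equivalent to show that $\rho_*[\xi]\in H_k(L)$ vanishes.

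The core construction is a slicing argument. For sufficiently small $r>0$ the support of $\xi$ lies outside the ambient ball $\bB^n(\bo,r)$, and for generic such $r$ the sphere $\Srn(\bo,r)$ is transverse to $\supp(\eta)$ in the definable sense. Then $\eta_r:=\eta\cap\clos(\bB^n(\bo,r)^c)$ is a definable $(k+1)$-chain contained in $C\setminus\bo$ whose boundary is $\xi-\xi_r$, where $\xi_r:=\eta\cap\Srn(\bo,r)$ is a $k$-cycle lying in $C\cap\Srn(\bo,r)=rL$. Rescaling by $1/r$ (which is a homeomorphism of $C$) yields a $k$-cycle $\tilde\xi_r:=(1/r)\cdot\xi_r\subset L$, and applying $\rho$ to the homology $\xi\sim\xi_r$ shows $[\tilde\xi_r]=\rho_*[\xi]$ in $H_k(L)$.

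The decisive step is the volume estimate. Since $\supp(\eta)$ is thin at $\bo$, the fact recalled just before the statement gives $\vol_{k+1}(\supp(\eta)\cap\bB^n(\bo,R))=o(R^{k+1})$ as $R\to 0$. A coarea-type inequality for the $1$-Lipschitz distance function $|\cdot|$ on $\supp(\eta)$ gives
\begin{equation*}
\int_0^R \vol_k\!\big(\supp(\eta)\cap\Srn(\bo,r)\big)\,dr \ \leq\ \vol_{k+1}\!\big(\supp(\eta)\cap\bB^n(\bo,R)\big) \ =\ o(R^{k+1}).
\end{equation*}
Because rescaling by $1/r$ multiplies $k$-volumes by $r^{-k}$, we have $\vol_k(\tilde\xi_r)=r^{-k}\vol_k(\xi_r)$. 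If no sequence $r_j\to 0$ achieved $\vol_k(\tilde\xi_{r_j})\to 0$, then $\vol_k(\xi_r)\geq c\,r^k$ on some $(0,R_0)$, contradicting the displayed integral bound. Hence $\rho_*[\xi]$ admits homologous representatives in $L$ of arbitrarily small $k$-volume, and Corollary~\ref{cor:FF-defin} applied to the compact definable set $L$ forces $\rho_*[\xi]=0$, whence $\xi$ bounds in $C\setminus\bo$.

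The main delicate point is the coarea/slicing estimate, which must be performed in the definable category on a chain rather than on a smooth submanifold; one either triangulates $\supp(\eta)$ compatibly with $|\cdot|$ and applies a definable Fubini, or invokes the standard coarea inequality valid on rectifiable sets. A minor auxiliary point is choosing $r$ at which $\Srn(\bo,r)$ slices $\eta$ transversely and produces a genuine $k$-cycle $\xi_r$; this is ensured for all but finitely many small values of $r$ by definable transversality.
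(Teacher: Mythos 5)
Your proposal is correct and it runs along the same skeleton as the paper's proof: slice the bounding chain $\eta$ by spheres of small radius to produce cycles $\xi_r$ homologous to $\xi$, rescale these into the link, verify that their $k$-volumes can be made arbitrarily small, and then invoke Corollary~\ref{cor:FF-defin}. Where you diverge is in the two middle steps. First, you make the reduction to a genuine cone $C=\cone(L)$ explicit and justified via Proposition~\ref{prop:homom-fc-homol}, whereas the paper's proof implicitly assumes $Z$ is already a cone when it asserts $\supp(\xi_t)\subset t(\So Z)$; your version is cleaner on this point. Second, your small-volume step is a coarea/Fubini inequality applied to the $1$-Lipschitz distance function restricted to $\supp(\eta)$, using $\vol_{k+1}(\supp(\eta)\cap\bB^n(\bo,R))=o(R^{k+1})$ to extract a sequence $r_j\to 0$ with $\vol_k(\tilde\xi_{r_j})\to 0$; the paper instead observes that, because $\supp(\eta)$ is thin, the Hausdorff limit of the rescaled slices $\tfrac1t\supp(\xi_t)$ is a definable set of dimension at most $k-1$ inside the tangent link, which by definable continuity of volume forces $\vol_k(\tfrac1t\xi_t)\to 0$. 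Both are sound; the Hausdorff-limit argument stays entirely inside the o-minimal dimension-theoretic toolkit and avoids the rectifiability/coarea apparatus (which, as you yourself flag, needs a word of justification on a definable chain), while your coarea route is more quantitative and does not require identifying the Hausdorff limit. You might also note that your argument only extracts a subsequence $r_j\to 0$ with small volume, but since the $\tilde\xi_{r_j}$ are all homologous to $\rho_*\xi$ in $L$, that is exactly what Corollary~\ref{cor:FF-defin} needs.
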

\begin{proof}[Proof of Corollary \ref{cor:cone-no-hom}.]
We will use Theorem \ref{thm:FF-polyhedra}. We can assume $Z$ is contained in $\Rn$ and its tangent link $\So Z$
is connected. We work within the intersection of $Z$ with a closed ambient Euclidean ball $\clos(\bB^n(\bo,\ve))$. Let 
$\xi$ be a fast contracting $k$-cycle and let $\eta$ be the chain it bounds in $Z$. For a positive small real number $t$ 
we define $\xi_t$ as the $k$-cycle whose support is the intersection $\supp(\eta) \cap \Sr_t^{n-1}$,
where $\Sr_t^{n-1}$ is the $(n-1)$-dimensional Euclidean sphere centered at $\bo$ of radius $t$,
and the coefficients are those from $\eta$ when the intersection is not empty.
Thus $\xi_t$ is a cycle homologous to $\xi$. Since the germ $Z$ is the homeomorphic 
image of a cone by an isomorphism, the support $\supp(\xi_t)$ is contained in the scaled tangent link 
$t(\So Z)\subset \Sr_t^{n-1}$. Since the support of $\eta$ is thin at the origin, the 
Hausdorff limit at $t=0$ of the rescaled support $\frac{1}{t} \supp (\xi_t)$ tends to a definable subset 
of dimension at most $k-1$ contained in the tangent link $\So Z$. We apply Corollary \ref{cor:FF-defin} to the tangent 
link $\So Z$ and the family of cycles $(\frac{1}{t} \xi_t)_{\{0<t\ll 1\}}$ and get the result.
\end{proof}
Note that Corollary \ref{cor:cone-no-hom} above gives a very quick proof to the main result 
\cite[Theorem 2.6]{BFGoS} of the paper \cite{BFGoS}.

\smallskip
We can now state the main result of the paper. We recall that $\Sns X_1$ is the locus of non-simple 
tangent directions (see Definition \ref{def:simple-dir}).
\begin{theorem}\label{thm:main}
Let $X_1$ be a simply embedded model of a closed definable isolated singularity of some Euclidean
space, thick at the origin and with connected link. 
For any positive real number $\ve$ small enough, the inclusion mapping induces the following exact sequence 
\begin{equation}\label{eq:main-thm}
\calH_\bullet (X_1\cap \cone(\Sns X_1, \ve),\bo) \to \calH_\bullet (X_1,\bo) \to 0.
\vspace{4pt}
\end{equation}
\end{theorem}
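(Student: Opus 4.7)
The plan is to show that every fast-contracting homology class of $X_1$ admits a representative cycle supported in the conic neighborhood $X_1\cap\cone(\Sns X_1,\ve)$, by combining the decomposition furnished by Corollary \ref{cor:isom-tgt-cone} with the polyhedral Federer--Flemming Theorem \ref{thm:FF-polyhedra} applied component-wise in the ``simply conical'' complement. For $\ve>0$ sufficiently small, let $K_1,\ldots,K_s$ be the closures of the connected components of $X_1\setminus \cone(\Sns X_1,\ve)$; by Corollary \ref{cor:isom-tgt-cone} each $K_j$ is isomorphic to its tangent cone $T_\bo K_j$. Given a fc-cycle $\xi$ with thin contracting chain $\eta$, I would choose a definable triangulation $T$ of a representative of $X_1$ compatible with the subsets $X_1\cap\cone(\Sns X_1,\ve)$, each $K_j$, and the supports $\supp(\xi)$ and $\supp(\eta)$, as exists in any o-minimal structure.

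Next, for a small generic $t>0$ I form the spherical slice $\xi_t:=\eta\cap\Sr_t^{n-1}$: it is a definable $k$-cycle homologous to $\xi$ in $X_1\setminus\bo$ via the shell portion of $\eta$, and because $\eta$ is thin (so $\dim \So\supp(\eta)\leq k-1$) the normalized support $\frac{1}{t}\supp(\xi_t)$ Hausdorff-converges to a subset of $\So\supp(\eta)$ of dimension at most $k-1$. This yields $\vol_k(\xi_t)=o(t^k)$, so that for sufficiently small $t$ the cycle $\xi_t$ has $k$-volume below any prescribed threshold.

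The heart of the proof is then to push $\xi_t$ off each $K_j$. The restriction $\xi_t^{(j)}:=\xi_t\cap K_j$ is a relative $k$-chain in $(K_j,\partial K_j)$ of small volume. Running a relative version of the argument of Theorem \ref{thm:FF-polyhedra} inside the simplices of $T$ lying in $K_j$ --- using that the radial projection $\Phi_{\sigma,x_0}$ from the interior of any simplex $\sigma$ fixes $\partial\sigma$, so boundary portions within $\partial K_j$ are never disturbed --- deforms $\xi_t^{(j)}$ through a chain supported in $K_j$ into a chain supported in $\partial K_j\subset X_1\cap\cone(\Sns X_1,\ve)$; the isomorphism $\psi_j\colon K_j\to T_\bo K_j$ with a cone, which underlies Corollary \ref{cor:cone-no-hom}, guarantees that there is no topological obstruction to this reduction. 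Gluing these deformations over all $j$ with $\xi_t\cap\cone(\Sns X_1,\ve)$ produces a definable $k$-cycle $\xi'$ supported in $X_1\cap\cone(\Sns X_1,\ve)$ and a definable chain $\zeta$ in $X_1\setminus\bo$ with $\partial\zeta = \xi_t-\xi'$. Concatenating $\zeta$ with the $\eta$-shell gives a homology between $\xi$ and $\xi'$ in the link, while $\eta$ itself remains a thin chain containing $\bo$ contracting $\xi'$ up to a link-supported boundary (which does not add tangent directions at $\bo$). Hence $[\xi']\in\calH_k(X_1\cap\cone(\Sns X_1,\ve),\bo)$ maps to $[\xi]$ under the inclusion, giving the claimed surjectivity.

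The main obstacle will be establishing the relative version of Theorem \ref{thm:FF-polyhedra} component-wise inside each $K_j$: Theorem \ref{thm:FF-polyhedra} is stated for absolute cycles in a compact polyhedron, whereas $\xi_t^{(j)}$ is only a relative cycle with boundary prescribed in $\partial K_j$. One must adapt the iterated simplex-by-simplex projection scheme so that, at each stage of skeleton reduction, the accounting of chains lying in $\partial K_j$ is carried consistently through the projections. This adaptation is natural because $\Phi_{\sigma,x_0}$ already preserves $\partial\sigma$, but tracking the boundary contributions across the full $(d-k)$-step reduction is the delicate technical point of the argument. A secondary nuisance is that the isomorphism $\psi_j$ need not be bi-Lipschitz, so one cannot transport small-volume estimates from $K_j$ to $T_\bo K_j$; the Federer--Flemming argument must be executed directly in the triangulation of $K_j$ using only that $\vol_k(\xi_t)=o(t^k)$ in the ambient metric.
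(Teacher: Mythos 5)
After the triangulation step your proposal diverges from the paper's proof, and the gap you flag --- the need for a \emph{relative} version of Theorem \ref{thm:FF-polyhedra} inside each $K_j$ --- is genuine and is not a technicality the paper fills in. The paper never slices $\eta$ by spheres and never reduces a relative chain. Instead (Lemma \ref{lem:homology-in-thin-non-simply}) it decomposes the contracting chain itself: writing $\calU_\ve := \cone(\Sns X_1,\ve)$ and $\bdr\calU_\ve := \clos(\calU_\ve)\setminus\calU_\ve$, and choosing a triangulation compatible with $\supp(\eta)\cap(X_1\setminus\calU_\ve)$, $\supp(\eta)\cap\bdr\calU_\ve$ and $\supp(\eta)\cap\clos(\calU_\ve)$, one splits $\eta=\eta^++\eta^\bdr+\eta^-$ according to which region the supporting simplices lie in, whence $\xi=\bdr\eta^++\bdr\eta^\bdr+\bdr\eta^-$. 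Each piece $\xi^*:=\bdr\eta^*$ is an \emph{absolute} $k$-cycle and an fc-cycle, since it bounds the thin chain $\eta^*$; so Corollary \ref{cor:cone-no-hom} applies as stated. Because $\xi^+$ and $\xi^\bdr$ lie in pieces isomorphic to cones by Corollary \ref{cor:isom-tgt-cone}, they are null-homologous in the link, giving $\xi\sim\xi^-$ with $\xi^-$ supported in $\clos(\calU_\ve)\subset\cone(\Sns X_1,\ve')$ for $\ve'>\ve$.

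The reason this chain-decomposition is essential rather than cosmetic is the one you sense in your final paragraph. Theorem \ref{thm:FF-polyhedra} terminates because the reduction arrives at a $k$-cycle inside a $(k-1)$-dimensional skeleton, which must vanish; for the relative pair $(\xi_t^{(j)},\partial K_j)$ the analogous reduction instead terminates at a relative chain whose component in $\partial K_j$ is unconstrained, and one would need a separate relative Federer--Flemming lemma with a full bookkeeping of boundary terms over the $(d-k)$-step radial projection. You also correctly observe that the isomorphism $K_j\cong T_\bo K_j$ cannot be used to transport your small-volume estimate into a genuine cone where rescaling by $1/t$ would normalize things, since isomorphisms need not be bi-Lipschitz; that rescaling is precisely what makes the proof of Corollary \ref{cor:cone-no-hom} work for honest cones. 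The paper's decomposition of $\eta$ sidesteps both issues in one step by converting the relative problem inside each $K_j$ into a statement about an absolute fc-cycle there, to which Corollary \ref{cor:cone-no-hom} applies verbatim.
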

Before getting into the proof let us mention the straightforward following:
\begin{corollary}\label{cor:main-cor}
For any degree $k=1,\ldots,\dim X_1$ and for any dimension drop $\delta = 1,\ldots,k$,
for any positive real number $\ve$ small enough, the inclusion mapping induces the following exact sequence 
\begin{equation}\label{eq:main-1}
\calH_k^\delta (X_1\cap \cone(\Sns X_1, \ve),\bo) \to \calH_k^\delta (X_1,\bo) \to 0.
\vspace{4pt}
\end{equation}
\end{corollary}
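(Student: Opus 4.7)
The plan is to show that every fast-contracting cycle $\xi$ of $X_1$ is homologous in $X_1\setminus\bo$ to a fast-contracting cycle supported in $A:=X_1\cap\cone(\Sns X_1,\varepsilon)$. For the setup, apply Corollary \ref{cor:isom-tgt-cone} to choose $\varepsilon>0$ small enough that each connected component $B_i$ of the closed complement $\overline{X_1\setminus\cone(\Sns X_1,\varepsilon)}$ is isomorphic to its tangent cone; Corollary \ref{cor:cone-no-hom} then gives $\calH_\bullet(B_i,\bo)=0$. Fix a definable triangulation of a neighborhood of $\bo$ in $X_1$ compatible with $A$, each $B_i$, and the interfaces $A\cap B_i$, with simplices lying on the interface assigned to the $A$-side. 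Choose simplicial representatives: a $k$-cycle $\xi$ and a thin definable $(k+1)$-chain $\eta$ with $\partial\eta=\xi$ and $\bo\in\supp(\eta)$.

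The key step is to trivialize $\xi$ over each conic piece $B_i$. Decompose $\eta=\eta|_A+\sum_i\eta|_{B_i}$ as a disjoint sum of simplicial subchains. Setting $\eta_i:=\eta|_{B_i}$ and $\mu_i:=\partial\eta_i-\xi|_{B_i^\circ}$, one checks from the subcomplex structure of $B_i$ that $\mu_i$ is supported on the interface $A\cap B_i$ and that $\xi|_{B_i^\circ}+\mu_i$ is a $k$-cycle in $B_i\setminus\bo$. The subchain $\eta_i$ is thin, its tangent link being contained in $\So\eta$ and therefore of dimension at most $k-1$. If it contains the origin, then $\xi|_{B_i^\circ}+\mu_i$ is a fc-cycle of $B_i$, and Corollary \ref{cor:cone-no-hom} (whose proof rests on the polyhedral Federer--Fleming bound of Theorem \ref{thm:FF-polyhedra}) forces the triviality of its class, producing a definable $(k+1)$-chain $\nu_i\subset B_i\setminus\bo$ with $\partial\nu_i=\xi|_{B_i^\circ}+\mu_i$. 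If $\bo\notin\supp\eta_i$, the chain $\eta_i$ itself sits in $B_i\setminus\bo$ and serves as such a $\nu_i$.

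Assemble the representative in $A$ by setting $\xi':=\xi-\sum_i\partial\nu_i$. Then $\xi'\sim\xi$ in $X_1\setminus\bo$ through $\sum_i\nu_i$, and a direct computation using $\partial\nu_i=\xi|_{B_i^\circ}+\mu_i$ yields $\xi'=\xi|_A-\sum_i\mu_i$, a $k$-chain in $A$. The $(k+1)$-chain $\eta|_A$ is itself contained in $A$, is thin, and its boundary satisfies $\partial(\eta|_A)=\xi|_A-\sum_i\mu_i=\xi'$, the identity being obtained by restricting $\partial\eta=\xi$ to the $A$-side and recognizing $\mu_i$ as the interface contribution coming from $B_i$. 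When $\eta|_A$ contains $\bo$, it exhibits $\xi'$ as a fc-cycle in $A$ with $\iota_*[\xi']=[\xi]$. When it does not, one has $\bo\in\supp\eta_i$ for some $i$, the entire $\eta|_A$ lies in $A\setminus\bo$, hence $\xi'$ is already trivial in $H_k(A\setminus\bo)$ and $[\xi]=0$ in $\calH_\bullet(X_1,\bo)$; the surjectivity holds in both cases.

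The principal ingredient is Corollary \ref{cor:cone-no-hom}, which encodes Theorem \ref{thm:FF-polyhedra} and produces the definable bounding chains $\nu_i$ inside each cone $B_i$; the vanishing of the fc-homology in each conic piece is the crucial geometric input. The remaining difficulties are essentially bookkeeping: ensuring a definable triangulation compatible with $A$ and all the $B_i$'s, and checking that the signed boundary identities $\partial(\eta|_A)=\xi|_A-\sum_i\mu_i$ and $\partial\eta_i=\xi|_{B_i^\circ}+\mu_i$ hold simultaneously for the chosen orientation conventions on the interface, which is routine in the definable category. The refined Corollary \ref{cor:main-cor} follows by the same argument, as the bounding chain $\eta|_A$ inherits the dimension drop of $\eta$.
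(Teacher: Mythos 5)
Your proof is correct and follows essentially the same route as the paper's own argument (via Lemma \ref{lem:homology-in-thin-non-simply}): triangulate compatibly with the conic decomposition of Corollary \ref{cor:isom-tgt-cone}, split the thin contracting chain $\eta$ along the cone/complement boundary, kill the contribution in each conic piece by Corollary \ref{cor:cone-no-hom}, and observe that the restriction of $\eta$ to the remaining thin region contracts the resulting cycle with at least the same dimension drop. Your treatment of the interface chains $\mu_i$ and the case where pieces avoid the origin is a careful spelling-out of points the paper's proof leaves implicit, but it is not a genuinely different argument.
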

Theorem \ref{thm:main} is a consequence of the next result.
\begin{lemma}\label{lem:homology-in-thin-non-simply}
Let $\xi$ be a $k$-dimensional definable fc-cycle of $X_1$.
Then for any $\ve$ small enough, the homology class of the cycle $\xi$ can be represented by a fast
contracting definable cycle supported in the link of $\cone(\Sns X_1,\ve)$. 
\end{lemma}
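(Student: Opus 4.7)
The plan is to combine sphere-slicing with the cone structure of the complement of $\cone(\Sns X_1,\ve)$ in $X_1$, guided by the triviality of fc-homology in that complement (Corollaries \ref{cor:isom-tgt-cone} and \ref{cor:cone-no-hom}).

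First, I would slice. Since $\xi$ is fc, there is a thin $(k+1)$-chain $\eta$ of $X_1$ with $\bdr\eta=\xi$ and $\bo\in\supp(\eta)$. For $0<t\ll r$, set $\eta_t:=\eta\cap\clos(\bB^n(\bo,t))$ and $\xi_t:=\bdr\eta_t=\eta\cap\Sr^{n-1}_t$. The slab $\eta\cap\{t\le|\cdot|\le r\}$ is a $(k+1)$-chain of $X_1\setminus\bo$ realizing $[\xi]=[\xi_t]$ in $H_k(X_1\setminus\bo)$; and $\eta_t$ shows that $\xi_t$ is itself a fc-cycle. Following the argument in the proof of Corollary \ref{cor:cone-no-hom}, the thinness of $\eta$ implies that the rescaled cycles $t^{-1}\xi_t$ Hausdorff-converge to a subset of $\So\eta\subset\So X_1$ of dimension at most $k-1$, and hence $\vol_k(\xi_t)=o(t^k)$ as $t\to 0$.

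Second, I would localize on $Y_\ve:=\clos(X_1\setminus\cone(\Sns X_1,\ve))$. Set $P:=X_1\cap\bdr\cone(\Sns X_1,\ve)$. By Corollary \ref{cor:isom-tgt-cone}, for $\ve$ small enough each connected component $C$ of $Y_\ve$ is isomorphic to its tangent cone $T_\bo C$, so $\calH_\bullet(C,\bo)=0$ by Corollary \ref{cor:cone-no-hom}. Writing $\bdr_{X_1}C:=C\cap P$, the localization identity $\bdr(\eta_t\cap Y_\ve)=(\xi_t\cap Y_\ve)+(\eta_t\cap P)$ yields the chain-level relation
\[
\xi_t \;=\; (\xi_t\cap\cone(\Sns X_1,\ve)) \;-\; (\eta_t\cap P) \;+\; \bdr(\eta_t\cap Y_\ve)
\]
in $X_1$. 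The chain $\xi'_t:=(\xi_t\cap\cone(\Sns X_1,\ve))-\eta_t\cap P$ is supported in $\cone(\Sns X_1,\ve)\cap X_1$ and is homologous to $\xi_t$ via $\eta_t\cap Y_\ve$ as chains in $X_1$.

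Third, I would upgrade this identity to a homology in $X_1\setminus\bo$ and make $\xi'_t$ avoid $\bo$. The chain $\eta_t\cap Y_\ve$ passes through $\bo$, and $\eta_t\cap P$ typically does too, so the identity above does not immediately live in the link. To bypass this I would apply the fc-triviality of each component $C$ via the sphere-slice approximations: for $0<t'\ll t$, setting $\eta_{t,t'}:=\eta\cap\{t'\le|\cdot|\le t\}$, the $k$-cycle $(\xi_t-\xi_{t'})\cap C+\eta_{t,t'}\cap\bdr_{X_1}C$ lies in $C\setminus\bo$ and bounds in $C\setminus\bo$ via $\eta_{t,t'}\cap C$. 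Passing to a definable limit as $t'\to 0$ — justified by the o-minimal finiteness of topological types in the definable family of bounding chains, combined with $\calH_\bullet(C,\bo)=0$ to excise the $\bo$-contribution — produces a $(k+1)$-chain $\beta_C$ of $C\setminus\bo$ witnessing $\xi_t\cap C\sim-\eta_t\cap\bdr_{X_1}C$ in $X_1\setminus\bo$. Summing over components gives $\xi_t\sim\xi'_t$ in $X_1\setminus\bo$. An analogous argument applied to $P$ — whose tangent link consists of simple directions of $X_1$ and whose components are therefore isomorphic to their tangent cones — moves $\eta_t\cap P$ off $\bo$, producing the desired fc-cycle supported in the link of $\cone(\Sns X_1,\ve)$, bounded by the thin chain $\eta_t\cap\cone(\Sns X_1,\ve)$ after boundary corrections.

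The main obstacle is the definable-limit step of the third stage: the chains $\eta_{t,t'}\cap C$ collapse to $\eta_t\cap C$, which contains $\bo$, so merely taking $t'\to 0$ does not produce a $\bo$-avoiding bounding chain. One must genuinely use the triviality $\calH_\bullet(C,\bo)=0$ to replace the collapsing chains by a uniformly $\bo$-avoiding family and then extract a definable limit via o-minimal selection. A more direct alternative would apply Theorem \ref{thm:FF-polyhedra} to a definable cone triangulation of $Y_\ve$ pulled back through the isomorphisms to the tangent cones, using $\vol_k(\xi_t)=o(t^k)$ to iteratively push $\xi_t$ onto $P$ along radial directions inside each cone-simplex, at the cost of a more intricate polyhedral bookkeeping but avoiding the limit.
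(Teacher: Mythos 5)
Your proposal has a genuine gap, and you say so yourself: the ``definable-limit step'' of stage three is exactly the crux, and you do not resolve it. After you replace $\xi_t$ by $\xi'_t = (\xi_t\cap\cone(\Sns X_1,\ve)) - \eta_t\cap P$ via the homology $\eta_t\cap Y_\ve$, both the correction chain and $\eta_t\cap P$ pass through $\bo$; nothing you write produces a bounding chain that avoids $\bo$. Appealing to ``o-minimal finiteness of topological types'' plus ``$\calH_\bullet(C,\bo)=0$'' is only a gesture: the family $\eta_{t,t'}\cap C$ degenerates onto a chain through $\bo$ as $t'\to 0$, and selecting a uniformly $\bo$-avoiding replacement family would itself require an argument at least as substantial as the lemma. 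The ``analogous argument applied to $P$'' is also asserted but not carried out, and $P$ does not consist only of simple points of $X_1$ in general (it sits inside $\cone(\Sos X_1,0)$ but its own tangent link is a different object), so the claimed isomorphism of its components with cones is unjustified.

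The paper avoids the slicing/limit machinery entirely. It works with the original thin chain $\eta$, chooses a definable triangulation of $X_1$ in which $\eta$ is simplicial and compatible with the decomposition into $\calU_\ve := \cone(\Sns X_1,\ve)$, its frontier $\bdr\calU_\ve$, and the complement, and splits $\eta = \eta^+ + \eta^\partial + \eta^-$ according to where each simplex sits. Each summand is a thin chain whose support contains $\bo$. Because the components of the complement of $\calU_\ve$ and of $\bdr\calU_\ve$ are, by Corollary~\ref{cor:isom-tgt-cone}, isomorphic to their tangent cones, Corollary~\ref{cor:cone-no-hom} kills the associated boundary cycles $\xi^+$ and $\xi^\partial$; hence $\xi \sim \xi^-$, which is supported in $\cone(\Sns X_1,\ve')$ for any $\ve'>\ve$. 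This is the ``polyhedral bookkeeping'' alternative you mention in your final sentence, and it is in fact the cleaner route: one application of the cone-triviality corollaries at the chain level replaces your three-stage slicing, localization, and limit-extraction scheme. If you want to salvage your approach, the missing ingredient is precisely a chain-level version of step three, and the natural way to get it is the triangulation-based decomposition you set aside.
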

\begin{proof}
For $\ve$ positive and small enough the topological type of the germ 
at the origin of the complement $X_1 \setminus \cone(\Sns X_1, \ve)$ is constant. It consists in finitely 
many closed connected components each of which is isomorphic to its tangent cone by
Corollary \ref{cor:isom-tgt-cone}.

\smallskip
Let $\calU_\ve := \cone (\Sns X_1,\ve)$ and $\bdr \calU_\ve := \clos (\calU_\ve)\setminus\calU_\ve$.
\smallskip
Let $\xi$ be a definable fc-cycle of dimension $k$ bounding a definable  $(k+1)$-chain $\eta$
whose support is thin and contains the origin. 
There exists a definable triangulation of the subset $X_1$ such that $\eta = \sum_{\sigma\in K} a_\sigma \sigma$ and
where each of the following intersections $\supp(\eta)\cap(X_1\setminus\calU_\ve)$, $\supp(\eta)\cap\bdr\calU_\ve$ and
$\supp(\eta)\cap\clos(\calU_\ve)$ is a finite union of simplices.

We observe that the support of each simplex $\sigma$ of $K$ of dimension $k+1$ is thin at the origin.

Let $K^\bdr$ be the set of simplices of the chain $\eta$ contained in $\bdr\calU_\ve$,
$K^+$ be the set of simplices of $\eta$ contained in $X_1\setminus \calU_\ve$ but not in $\bdr\calU_\ve$, and
$K^-$ be the set of of simplices of $\eta$ contained in $\clos(\calU_\ve)$ but not in $\bdr\calU_\ve$.
In particular the chain $\eta$ is presented as the following sum
\begin{center}
$\eta = \eta^+ + \eta^- + \eta^\partial,$
\end{center}
where $\eta^* = \sum_{\sigma\in K^*} a_\sigma \sigma$ for $*\in \{-,+,\bdr\}$. We then check that the cycle $\xi$
decomposes as 
\begin{center}
$\xi = \partial \eta = \xi^+ + \xi^\partial + \xi^-$
\end{center}
with $\xi^* =\bdr \eta^*$ where $*\in \{-,+,\bdr\}$.
In particular $\xi^+$, $\xi^\partial$ and $\xi^-$ are fc-cycles at $\bo$.
By Corollary \ref{cor:isom-tgt-cone} and Corollary \ref{cor:cone-no-hom} 
we deduce that the cycles $\xi^+$ and $\xi^\partial$ are trivial and thus $\xi$ is homologous to $\xi^-$ which is 
contained in $\calU_{\ve'} = \cone (\Sns X,\ve')$ for any $\ve'>\ve$.
\end{proof}
%
%
%
%
%
%
%
%
%
%
%
%
%
%
%
%
%
%
%
%
%
%
%
%
%
%
%
%
%
%
%
%
\section{Topological thick and thin decomposition theorem for thick isolated 
singularities}\label{Section:TaToTIS}
As a consequence of the results of Section \ref{Section:MR}, we will produce a systematic 
decomposition of the isolated singularity we are working with into a \em topological thick part 
\em and a \em topological thin part. \em As already proved in Section \ref{Section:MR}, any fc-homology 
class can be realized in the thin zone.

As in \cite{BNP}, this decomposition is not unique stricto sensu.
It is a one-parameter definable family of open subsets such that the Hausdorff limit at
$0$ of the family of tangent cones at the origin is the cone over the locus of the non-simple 
tangent directions, thus is thin (see Definition \ref{def:thin-zone}). 
We will see that it is consistent under automorphisms and its topology does not depend
on the choice of any element of the family. 
\\
In order to provide an appropriate definition of the thin zone, we will review the notion of 
a definable neighborhood of a compact definable subset of some Euclidean space.  
 
\medskip
Let $Z$ be a compact definable subset of some Euclidean space.
\begin{definition}\label{def:def-nghb}
A definable neighborhood of the definable compact subset $Y$ of $Z$ is any neighborhood of $Y$ in $Z$
of the form $f^{-1}([0,\ve[)$, where $f:Z\to\R_+$ is a non-negative continuous definable function
vanishing exactly on $Y$ and $\ve$ is a positive real number.
\end{definition}
Since $Z$ is embedded in some Euclidean space, the outer distance function on $Z$ to 
any definable closed subset of $Z$ defines a (definable) 
family of definable neighborhoods of the given closed subset.

Let $Y$ be a definable compact subset of $Z$ and let $\calU_\ve := f^{-1}([0,\ve[)$ be a definable 
neighborhood of $Y$. Since the definable family $(f^{-1}([0,t[))_{t>0}$ admits only finitely
many topological type \cite{vdD,Co}, we can always take $\ve$ such that for any positive real number $\ve' < \ve$,
the definable neighborhoods $\calU_\ve$ and $\calU_{\ve'}$ are (definably) homeomorphic.

The other important property of definable neighborhoods of definable compact subsets 
is that there are unique, up to a homeomorphism. More precisely 
\begin{proposition}\label{prop:defin-neighb}
Let $\calU_\ve := f^{-1}([0,\ve[)$ and $\calV_\delta:= g^{-1}([0,\delta[)$ be two definable neighborhoods of $Y$ 
a definable compact subset of $Z$ for any positive real numbers $\ve,\delta$ small enough. Then $\calU_\ve$ 
and $\calV_\delta$ are homeomorphic.    
\end{proposition}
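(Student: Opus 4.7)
The plan is to combine the o-minimal Łojasiewicz inequality, to sandwich $\calV_\delta$ between two $f$-sublevel neighborhoods, with the definable Hardt triviality theorem, which will model each such neighborhood as a definable mapping cylinder of a surjection onto $Y$.

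\textbf{Łojasiewicz sandwich.} First I would apply the o-minimal Łojasiewicz inequality to $f$ and $g$, which are continuous definable functions on the compact definable set $Z$ vanishing exactly on $Y$. This produces a constant $C>0$ and exponents $\alpha,\beta>0$ with $f \le C g^\alpha$ and $g \le C f^\beta$ on $Z$, so for every sufficiently small $\delta>0$ there exist $0 < \ve_1 < \ve_2$ with
$$\calU_{\ve_1}\ \subseteq\ \calV_\delta\ \subseteq\ \calU_{\ve_2},$$
and symmetrically for $\calU_\ve$ inside two $\calV$-neighborhoods. It therefore suffices to show that every definable neighborhood of $Y$ caught in such a sandwich is homeomorphic to the outer $\calU_{\ve_2}$.

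\textbf{Hardt trivialization.} Next I would apply definable Hardt triviality to $f\colon Z\to\R_+$ at the value $0$, obtaining for some small $t_0>0$ a definable homeomorphism
$$\chi\colon f^{-1}\big((0,t_0)\big)\longrightarrow(0,t_0)\times L_f,\qquad f=\pi_1\circ\chi,$$
where $L_f:=f^{-1}(t_0/2)$. The continuous limit of $\chi^{-1}$ collapses $\{0\}\times L_f$ onto $Y$, presenting $\calU_{t_0}$ as a definable mapping cylinder of a surjection $L_f\to Y$. The radial rescaling $(s,x)\mapsto(s\cdot t_0/t,x)$ then gives a definable homeomorphism $\calU_t\cong\calU_{t_0}$, fixing $Y$ pointwise, for every $0<t<t_0$. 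Applying the same construction to $g$ presents $\calV_\delta$ as a definable mapping cylinder of a surjection $L_g\to Y$.

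\textbf{Identification of the two mapping cylinders.} To finish, I would apply definable Hardt triviality to the joint map $(f,g)\colon Z\to\R_+^2$ on a neighborhood of $Y$. The sandwich of the first step confines the image of $(f,g)$ to a narrow definable region about the diagonal which accumulates at $(0,0)$, and the resulting joint partition of $\R_+^2$ near the origin has open cells -- each accumulating at $(0,0)$ -- above which $(f,g)$ is definably trivial with a common fiber $L$. Up to definable homeomorphism this identifies $L$ with both $L_f$ and $L_g$, and the two induced collapse maps $L\to Y$ agree. A definable homeomorphism of the base sending $\{f<\ve\}$ to $\{g<\delta\}$ then lifts through the trivialization to the required homeomorphism $\calU_\ve\cong\calV_\delta$. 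The hardest part will be to control the cellular combinatorics of the joint partition near $(0,0)$ -- namely, ruling out distinct open cells adjacent to the origin that carry non-equivalent links -- and this is precisely where the Łojasiewicz sandwich of the first step gets brought in.
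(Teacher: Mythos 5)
Your route is genuinely different from the paper's. The paper follows Durfee's semi-algebraic argument: after a Loi stratification making $f$ and $g$ piecewise $C^r$ with Thom's $(a_f)$ condition, it introduces the single interpolating function $\kappa(x) = \frac{f(x)-\ve}{(f(x)-\ve)-(g(x)-\delta)}$ on the shell $S = g^{-1}([0,\delta])\setminus f^{-1}([0,\ve[)$, checks via the non-opposing-gradients lemma that $\kappa$ has no stratified critical points, and applies the Thom first isotopy lemma to flow the boundary $f^{-1}(\ve)$ onto $g^{-1}(\delta)$. In other words it reduces the comparison of the two neighborhoods to a Hardt/Thom trivialization of one scalar function, which is precisely where the analytical control comes from. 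You instead try to compare the two mapping-cylinder presentations coming from Hardt triviality of $f$ and of $g$ separately, mediated by joint Hardt triviality of $(f,g)\colon Z \to \R_+^2$.

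The gap in your version is the step you yourself flag at the end. First, the claim that the Łojasiewicz inequalities $f\le Cg^{\alpha}$, $g\le Cf^{\beta}$ confine the image of $(f,g)$ to a "narrow region about the diagonal" is not correct in general: for $\alpha,\beta\ne 1$ the constraint is a horn region tangent to a coordinate axis at $(0,0)$, not a wedge around $s=t$. More seriously, even granting the sandwich, nothing in it rules out that the Hardt partition of $\R_+^2$ for $(f,g)$ has several two-dimensional cells adjacent to $(0,0)$ inside the image, over which the trivializations carry fibers that need not glue along the lower-dimensional cell boundaries into a single trivialization; and your intended base homeomorphism from $\{f<\ve\}$ to $\{g<\delta\}$ must sweep across those cell boundaries. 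This compatibility is exactly the content of the statement, not an input you get for free, and the Łojasiewicz inequalities are a necessary condition, not a mechanism to enforce it. The paper's $\kappa$-construction is designed precisely to sidestep this issue by producing, from the start, one function whose sublevel sets interpolate between the two boundaries and whose lack of critical values in $[0,1]$ is what gives the isotopy. To salvage your approach you would need either to prove a gluing lemma for the joint Hardt trivialization across cells adjacent to the origin, or to reduce back to a one-parameter Hardt trivialization — which is, in effect, reinventing Durfee's $\kappa$.
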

Before sketching the proof, let us say that we can have a much more precise
statement, but what we need to define the notion of \em thin zone \em is just what is stated.  
\begin{proof}
The proof works very much like for the semi-algebraic case \cite{Dur}. The main new ingredient 
in this o-minimal setting is that we can definably stratify any continuous definable function
as a $C^r$ function on each stratum, for any a-priori prescribed positive integer $r$, such 
that Thom's condition $(a_f)$ is satisfied along each stratum \cite{Loi}. Assume that $\calU_\ve$ 
is contained in $\calV_\delta$. Let $S := g^{-1}([0,\delta]) \setminus f^{-1}([0,\ve[)$ and let us 
define, as in the proof of \cite[Lemma 1.8]{Dur}, the following continuous definable function $\kappa:S \to [0,1]$ 
\begin{center}
$\displaystyle{\kappa(x) := \frac{f(x) - \ve}{(f(x) -\ve) - (g(x) -\delta)}}$
\end{center}
The function $\kappa$ is non negative and vanishes only on $f^{-1}(\ve)$.
If $\ve$ and $\delta$ are small enough, then the gradients of the restrictions of the functions
of $f$ and $g$ to a given stratum do not point in opposite directions
(see \cite[Lemma 1.8]{Dur}). Thus the gradient of the restriction 
of the function $\kappa$ along any stratum does not vanish. We can refine the 
definable stratification so that $\kappa$ satisfies Thom's condition $(a_f)$. 
Using Thom First Isotopy Lemma we conclude that this function induces a locally trivial 
fibration over its image $[0,1]$. The trivialization is realized by the flow of
a vector field, so that the level $f^{-1}(\ve)$ is pushed continuously onto 
the level $g^{-1}(\delta)$. This provides a continuous flow $F: f^{-1}(\ve)\times [0,1] \to S$
such that $F(x,0) = x$.

If $\ve$ was taken small enough to start with, there is also a continuous flow $G : f^{-1}(\ve)\times [0,\ve[ \to 
f^{-1} (]0,\ve])$ such that $f(G(x,t)) = \ve -t$ and $G(x,0) =x$, and moreover the definable neighborhood
$f^{-1} ([0,\ve[)$ is homeomorphic to the definable neighborhood $f^{-1} ([0,\ve'[)$, for any positive real number 
real number $\ve' <\ve$.

We deduce from these two flows that the definable neighborhood $f^{-1} ([0,\ve'[)$ is homeomorphic to the  
definable neighborhood $g^{-1}([0,\delta[) = S \cup f^{-1} ([0,\ve[)$ for any positive $\ve' <\ve$.
\end{proof}
We can now present the definition of the topological thin zone since
Proposition \ref{prop:defin-neighb} (when the definable neighborhood is chosen small enough) 
guarantees it is relevant:  
\begin{definition}\label{def:thin-zone}
Let $X_1$ be a simply embedded model of a closed definable isolated singularity $X_0$ of some Euclidean 
space, thick at the origin and with connected link at the origin. A \em topological thin zone \em of $X_1$ is 
the image by the blowing-down mapping $\beta$ of a definable neighborhood in the strict transform $X_1'$ 
of the non-simple point locus $\ns(\bdr X_1') = \Sns X_1 \times 0$ of the boundary $\bdr X_1'$ of $X_1'$. 
We will denote it by $\thin (X_1)$. 
The complement of a topological thin zone is called a \em topological thick zone. \em 
\end{definition}
The standard family of definable neighborhoods we have in mind is obtained from the restriction 
to the strict transform $X_1'$ of the Euclidean distance function coming from the inclusions  
$X_1' \subset \Srn\times \R_+ \subset\Rn\times \R_+$. 
\\
Given an isomorphism $\Phi: X_1 \to X_2$ between two simply embedded models, in some Euclidean spaces $\R^p$
and $\R^q$ respectively, of a closed definable isolated singularities and thick at the origin, 
the interest of this definition is that a topological thin zone if mapped onto a topological thin zone. 
Indeed a topological thin zone of the simply embedded model $X_2$ is defined as $\beta_q(f_2^{-1}([0,\ve[))$ 
for a continuous definable function $f_2:X_2'\to \R_+$ vanishing exactly on $(\Sns X_2) \times 0\cap \bdr X_2$. 
Defining a continuous definable function $f_1 :X_1' \to \R_+$ as $f_1 := f_2 \circ \Phi'$, we see that the 
homeomorphism $\Phi'$ maps the definable neighborhood $f_1^{-1}([0,\ve[)$ onto 
the definable neighborhood $f_2^{-1}([0,\ve[)$, so that $\Phi$ maps the topological 
thin zone $\beta_p(f_1^{-1}([0,\ve[))$ onto the topological thin zone $\beta_q(f_2^{-1}([0,\ve[))$. Thus the 
decomposition into topological thick zone/topologically thin zone is preserved by isomorphisms. It is as we 
were expected an invariant decomposition for our class of isomorphisms which, as we have repeatedly said, 
contains definable bi-Lipschitz homeomorphisms for the outer metrics.

\medskip 
We can now rephrase Theorem \ref{thm:main} in terms of the topological thin zone:
\begin{theorem}\label{thm:main-bis}
Let $X_1$ be a simply embedded model of a closed definable isolated singularity of some Euclidean
space, thick at the origin and with connected link. 
The inclusion mapping induces the following exact sequence 
\begin{equation}\label{eq:main-bis-thm}
\calH_\bullet (\thin (X_1),\bo) \to \calH_\bullet (X_1,\bo) \to 0.
\vspace{4pt}
\end{equation}
\end{theorem}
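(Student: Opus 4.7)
The plan is to deduce Theorem \ref{thm:main-bis} directly from Theorem \ref{thm:main} by identifying $X_1\cap\cone(\Sns X_1,\ve)$ as a particular topological thin zone in the sense of Definition \ref{def:thin-zone}, and then using the uniqueness (up to homeomorphism) of definable neighborhoods to transport the conclusion to an arbitrary choice of $\thin(X_1)$.

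First I would exhibit $X_1\cap\cone(\Sns X_1,\ve)$ as a thin zone. Consider the continuous definable function $f:X_1'\to\R_+$ defined by $f(\bu,r):=\dist\bigl((\bu,r),\Sns X_1\times 0\bigr)$, measured in $\Srn\times\R_+$. This $f$ is non-negative, definable, and vanishes exactly on $\ns(\bdr X_1')=\Sns X_1\times 0$, so $f^{-1}([0,\ve[)$ is a definable neighborhood of $\Sns X_1\times 0$ in $X_1'$. Up to a harmless adjustment of the radius parameter, its image under $\beta$ coincides, as a germ at $\bo$, with $X_1\cap\cone(\Sns X_1,\ve')$. Hence Theorem \ref{thm:main} says precisely that this particular thin zone surjects onto the fc-homology of $(X_1,\bo)$.

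Second, for an arbitrary $\thin(X_1)=\beta(g^{-1}([0,\delta[))$ (with $g$ a continuous definable function on $X_1'$ vanishing exactly on $\Sns X_1\times 0$), Proposition \ref{prop:defin-neighb} applied to $f$ and $g$ produces, for $\ve,\delta$ small enough, a definable homeomorphism between $f^{-1}([0,\ve[)$ and $g^{-1}([0,\delta[)$. The construction of this homeomorphism via the flows $F$ and $G$ in the proof of that proposition takes place off the common zero level $\Sns X_1\times 0$ and leaves that locus invariant. Blowing down, this homeomorphism descends to a definable homeomorphism between the two thin zones in $X_1$ sending $\bo$ to $\bo$, and by construction its lift to $X_1'$ is the original homeomorphism of strict transforms, so conditions (i) and (ii) of Definition \ref{def:morphism} hold on both sides, making it an isomorphism between the two germs.

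Third, by Proposition \ref{prop:homom-fc-homol} this isomorphism yields an isomorphism of the corresponding fc-homology groups, compatible with the inclusions into $X_1$. Assembling the resulting commutative triangle
\begin{equation*}
\calH_\bullet(\thin(X_1),\bo)\xrightarrow{\sim}\calH_\bullet(X_1\cap\cone(\Sns X_1,\ve'),\bo)\to\calH_\bullet(X_1,\bo)\to 0,
\end{equation*}
where the second arrow is surjective by Theorem \ref{thm:main}, gives the desired exact sequence. The main delicate point is the second step: one must verify that the flow-based homeomorphism of Proposition \ref{prop:defin-neighb} truly extends continuously across $\Sns X_1\times 0$ and blows down to a continuous map at $\bo$, so that the map qualifies as an isomorphism of germs rather than merely a homeomorphism of the open neighborhoods in the strict transform. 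This amounts to checking that the vector fields trivializing the level-set fibration of $\kappa$ can be chosen to vanish (or remain tangent) on the boundary stratum $\Sns X_1\times 0$, which follows from the definable stratification of $\kappa$ with Thom's condition $(a_f)$ that is invoked in the proof of Proposition \ref{prop:defin-neighb}.
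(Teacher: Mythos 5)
The paper offers no explicit proof of Theorem~\ref{thm:main-bis}: it is presented as a direct rephrasing of Theorem~\ref{thm:main} once the topological thin zone has been defined and shown to be well defined (Proposition~\ref{prop:defin-neighb}). The implicit argument is simply a germ containment, and your proposal goes a longer, and partly incorrect, way around it.

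The first step of your proposal does not hold. You claim that $\beta(f^{-1}([0,\ve[))$, with $f(\bu,r)=\dist((\bu,r),\Sns X_1\times 0)$, coincides as a germ at $\bo$ with $X_1\cap\cone(\Sns X_1,\ve')$ for some adjusted $\ve'$. This is false. Near $\bdr X_1'$ one has $f(\bu,r)\approx\sqrt{\dist(\bu,\Sns X_1)^2+r^2}$, so $\beta(f^{-1}([0,\ve[))\cap\bB(\bo,\rho)$ is the set $\{r\bu : r<\rho,\ \dist(\bu,\Sns X_1)<\sqrt{\ve^2-r^2}\}$, whose angular width varies with $r$. The cone $X_1\cap\cone(\Sns X_1,\ve')$, by contrast, is $\{r\bu : \dist(\bu,\Sns X_1)<\ve'\}$ with a fixed angular width. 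These germs are not equal for any choice of $\ve,\ve',\rho$. More structurally: the strict transform of $X_1\cap\cone(\Sns X_1,\ve)$ is essentially the full-height strip $\{(\bu,r)\in X_1' : \dist(\bu,\Sns X_1)\le\ve\}$, and as $\ve\to 0$ it shrinks to $\Sns X_1\times\R_+\cap X_1'$, \emph{not} to the compact set $\Sns X_1\times 0$. So there is no continuous definable $g:X_1'\to\R_+$ vanishing exactly on $\Sns X_1\times 0$ with $g^{-1}([0,\ve[)$ equal to that strip, and $X_1\cap\cone(\Sns X_1,\ve)$ is simply not a thin zone in the sense of Definition~\ref{def:thin-zone}. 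Since step~1 fails, the rest of the argument (identifying $\thin(X_1)$ with the cone via Proposition~\ref{prop:defin-neighb} and then invoking Proposition~\ref{prop:homom-fc-homol}) has nothing to attach to.

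The correct route is also the shorter one. Given any definable neighborhood $f^{-1}([0,\delta[)$ of $\Sns X_1\times 0$ in the (compact) strict transform $X_1'$, openness of $\{f<\delta\}$ plus compactness of $\Sns X_1\times 0$ yield $\ve,\rho>0$ such that $\{(\bu,r)\in X_1' : \dist(\bu,\Sns X_1)<\ve,\ r<\rho\}\subset f^{-1}([0,\delta[)$. Blowing down, the germ at $\bo$ of $X_1\cap\cone(\Sns X_1,\ve)$ is contained in the germ of $\thin(X_1)$. The inclusion is a definable continuous map lifting to the strict transforms, hence a morphism, so by Proposition~\ref{prop:homom-fc-homol} one gets a factorization
\begin{equation*}
\calH_\bullet\bigl(X_1\cap\cone(\Sns X_1,\ve),\bo\bigr)\longrightarrow\calH_\bullet\bigl(\thin(X_1),\bo\bigr)\longrightarrow\calH_\bullet\bigl(X_1,\bo\bigr),
\end{equation*}
whose composite is the surjection of Theorem~\ref{thm:main}. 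Surjectivity of the composite forces surjectivity of the second arrow, which is exactly~\eqref{eq:main-bis-thm}. This argument bypasses Proposition~\ref{prop:defin-neighb} entirely and avoids the delicate issue, which you rightly flagged, of whether the flow-based homeomorphism of strict transforms descends to an isomorphism of germs at $\bo$.
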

Its expected version with degrees and dimension drops is the following 
\begin{corollary}\label{cor:main-cor-bis}
For any degree $k=1,\ldots,\dim X_1,$ and for any dimension drop $\delta = 1,\ldots,k,$
the inclusion mapping induces the following exact sequence 
\begin{equation}\label{eq:main-1-bis}
\calH_k^\delta (\thin(X_1),\bo) \to \calH_k^\delta (X_1,\bo) \to 0.
\vspace{4pt}
\end{equation}
\end{corollary}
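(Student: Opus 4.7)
The plan is to deduce Corollary \ref{cor:main-cor-bis} from Corollary \ref{cor:main-cor} by the same passage used to derive Theorem \ref{thm:main-bis} from Theorem \ref{thm:main}, namely by comparing a cone neighborhood of the non-simple locus to the topological thin zone. By Definition \ref{def:thin-zone}, the thin zone has the form $\thin(X_1) = \beta(f^{-1}([0,\ve[))$ for some continuous definable function $f:X_1'\to \R_+$ vanishing exactly on $\ns(\bdr X_1') = \Sns X_1 \times 0$, and some $\ve>0$.

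The key step is the following comparison: for $\ve'>0$ small enough, the germ at $\bo$ of $X_1 \cap \cone(\Sns X_1, \ve')$ is contained in the germ of $\thin(X_1)$. Lifting to the strict transform, $\beta^{-1}(X_1\cap \cone(\Sns X_1,\ve'))$ intersected with a small neighborhood of $\bdr X_1'$ lies in the set
\begin{equation*}
N_{\ve',\eta} := \{(\bu,r)\in X_1' : \dist(\bu,\Sns X_1) \leq \ve',\ r\leq \eta\}.
\end{equation*}
Because $f$ is continuous on $X_1'$ and vanishes on $\Sns X_1 \times 0$, uniform continuity on the compact set $N_{\ve',\eta}$ (for any fixed small $\eta$) allows one to pick $\ve'$ and $\eta$ so that $f < \ve$ throughout $N_{\ve',\eta}$, which yields the desired inclusion of germs.

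Both inclusions $X_1 \cap \cone(\Sns X_1, \ve') \hookrightarrow \thin(X_1) \hookrightarrow X_1$ are morphisms in the sense of Definition \ref{def:morphism}, being continuous and definable and extending as inclusions between the corresponding strict transforms. Proposition \ref{prop:homom-fc-homol-degree} thus produces the factorization
\begin{equation*}
\calH_k^\delta (X_1 \cap \cone(\Sns X_1, \ve'),\bo) \to \calH_k^\delta (\thin(X_1),\bo) \to \calH_k^\delta(X_1,\bo),
\end{equation*}
whose composition is the inclusion-induced homomorphism appearing in Corollary \ref{cor:main-cor}, and is therefore surjective. Consequently the second arrow is surjective, establishing the exact sequence (\ref{eq:main-1-bis}). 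The only non-formal ingredient is the comparison step; this reduces to a uniform continuity argument once the topological thin zone is unpacked as a definable neighborhood, with no use of the dimension drop $\delta$, so the refined version with degree and dimension drop follows exactly as the ungraded one.
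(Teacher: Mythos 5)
Your argument is correct, and it fills a gap that the paper actually leaves open: the paper states Theorem \ref{thm:main-bis} and this corollary without proof, presenting them merely as ``rephrasings'' of Theorem \ref{thm:main} and Corollary \ref{cor:main-cor} in terms of $\thin(X_1)$, the implicit bridge being Proposition \ref{prop:defin-neighb} (uniqueness of definable neighborhoods up to homeomorphism, applied to the observation that the lift of $X_1\cap\cone(\Sns X_1,\ve)$ to $X_1'$ is itself a definable neighborhood of $\ns(\bdr X_1')$). Your route is slightly different and, I think, cleaner: instead of producing a homeomorphism between the cone neighborhood and $\thin(X_1)$ and then having to verify it is an isomorphism in the sense of Definition \ref{def:morphism} compatible with the two inclusions into $X_1$, you establish a one-directional germ inclusion $X_1\cap\cone(\Sns X_1,\ve')\subset\thin(X_1)$ and factor the surjection of Corollary \ref{cor:main-cor} through $\thin(X_1)$ via functoriality (Proposition \ref{prop:homom-fc-homol-degree}). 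This avoids any compatibility check since all the maps involved are literal inclusions, which are obviously morphisms.

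One phrasing nit: the step you label ``uniform continuity'' is really a compactness argument. What is actually needed is: $X_1'$ is compact, $f$ is continuous on $X_1'$ and vanishes exactly on the closed set $\Sns X_1\times 0$, and $\bigcap_{\ve',\eta>0} N_{\ve',\eta} = \Sns X_1\times 0$; hence the closed sets $N_{\ve',\eta}\cap\{f\ge\ve\}$ have empty intersection, so by compactness some $N_{\ve',\eta}$ already misses $\{f\ge\ve\}$. Both $\ve'$ and $\eta$ must shrink simultaneously, not $\ve'$ alone for a fixed $\eta$. With that minor repair the comparison step is airtight, and, as you note, it is independent of the degree $k$ and the dimension drop $\delta$, so the graded statement follows at once.
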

%
%
%
%
%
%
%
%
%
%
%
%
%
%
%
%
%
%
%
%
%
%
%
%
%
%
%
\section{Non-simply embedded thick isolated singularities}\label{Section:SETIS}
We are going to have here a short view on what is happening for 
closed definable isolated singularities of some Euclidean space, thick at the origin and with 
connected link which are not simply embedded. 

\medskip
Any closed definable isolated singularity $X_0$ thick at the origin with connected link 
admits a simply embedded model $X_1$ and thus exists a simple embedding morphism 
$\Phi: X_1\to X_0$. As a consequence of our definitions and of Bernig \& Lytchak results \cite{BeLy},
it provides a surjective continuous definable finite-to-one mapping  $\bdr X_1' \to \bdr X_0'$. 

\smallskip
Let $\Phi_i:X_i\to X_0$, $i=1,2$, be two simple embedding morphisms of the closed definable isolated 
singularity $X_0$ thick at the origin with connected link. By definition of simply embedding morphism, 
the mapping $\Phi_2^{-1}\circ \Phi_1 : X_1 \to X_2$ is a definable homeomorphism which extends as an isomorphism 
$X_1' \to X_2'$. A topological thin zone of $X_1$ is then mapped isomorphically onto a topological thin zone of 
$X_2$. Thus we can introduce the following:
\begin{definition}\label{def:thinzone-nse}
The \em topological thin zone \em of $X_0$ is defined as the image of the topological thin zone of any simply embedded model 
of $X_0$ by the corresponding simple embedding morphism of the simply embedded model.
\end{definition}
As observed just before Definition \ref{def:thinzone-nse}, defining the topological thin zone this way 
is really relevant since it does not depend on the simply embedded model of $X_0$. 
The first consequence is the following
\begin{proposition}\label{prop:homol-thin-same}
Let $\Phi:X_1\to X_0$ be a simply embedding morphism of a closed definable isolated singularity $X_0$
thick at the origin and with connected link. For any degree $k$ and any dimension drop $\delta =1,\ldots,k$, 
the homomorphism $\Phi_*$ induces an isomorphism $\calH_k^\delta (\thin(X_1),\bo) \to \calH_k^\delta 
(\thin(X_0),\bo)$.
\end{proposition}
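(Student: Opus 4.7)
The plan is to show that $\Phi_*$ is a well-defined isomorphism by combining Proposition \ref{prop:homom-fc-homol-degree} for the forward direction with an explicit tangent-link estimate showing that the set-theoretic inverse $\Phi^{-1}$ (which is a definable homeomorphism, even though it may not be a morphism) sends thin chains of prescribed dimension drop to thin chains of at least the same dimension drop.

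First I would note that by Definition \ref{def:thinzone-nse} we have $\Phi(\thin(X_1))=\thin(X_0)$, so $\Phi$ restricts to a definable homeomorphism between the thin zones and is itself a morphism, its extension being the restriction of $\Phi':X_1'\to X_0'$ to the strict transform of $\thin(X_1)$. Proposition \ref{prop:homom-fc-homol-degree} then supplies the homomorphism $\Phi_*:\calH_k^\delta(\thin(X_1),\bo)\to\calH_k^\delta(\thin(X_0),\bo)$ for every degree $k$ and every dimension drop $\delta$.

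The key step will be the following lemma: for every definable $(k+1)$-chain $\eta'$ in $X_0$ whose support contains $\bo$ and is thin at $\bo$ with dimension drop at least $\delta$, the preimage $\Phi^{-1}(\eta')$ is a definable $(k+1)$-chain whose support contains $\bo$ and is thin at $\bo$ with dimension drop at least $\delta$. Since $\Phi$ is a definable homeomorphism the coefficients transport unambiguously and $\dim\supp(\Phi^{-1}(\eta'))=\dim\supp(\eta')$. To control the tangent link, I would take any sequence $(x_k)$ in $\Phi^{-1}(\supp(\eta'))$ converging to $\bo$ with $\nu(x_k)\to u$; the images $\Phi(x_k)$ lie in $\supp(\eta')$ and accumulate at $\bo$, and up to extraction $\nu(\Phi(x_k))\to u'\in\So\supp(\eta')$. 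By condition (ii) of Definition \ref{def:morphism}, the extension of $\Phi$ maps $(u,0)$ to $(u',0)$, so writing $\phi':\bdr X_1'\to\bdr X_0'$ for the induced boundary map we obtain $\phi'(u)=u'$ and hence
\[
\So\supp(\Phi^{-1}(\eta'))\subseteq(\phi')^{-1}(\So\supp(\eta')).
\]
Factoring the simple embedding morphism as in Definition \ref{def:SE-mod}, the boundary map $\phi'$ is the composition of the boundary homeomorphism of the isomorphism $h:X_1\to X$ with the definable, surjective and finite-to-one map induced by the Bernig--Lytchak differential of the inner bi-Lipschitz factor $(X,\din)\to(X_0,\din)$. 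In particular $\phi'$ is definable, surjective and finite-to-one, hence preserves dimension under preimages, giving $\dim(\phi')^{-1}(\So\supp(\eta'))=\dim\So\supp(\eta')$. This yields both the thinness of $\Phi^{-1}(\eta')$ and the asserted control on its dimension drop.

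Granted the lemma, bijectivity is immediate. For surjectivity, represent $[\xi']\in\calH_k^\delta(\thin(X_0),\bo)$ by a fc-cycle $\xi'$ contracted by a thin chain $\eta'$ of dimension drop at least $\delta$; then $\Phi^{-1}(\xi')$ is a fc-cycle in $\thin(X_1)\setminus\bo$ contracted by $\Phi^{-1}(\eta')$, so $[\Phi^{-1}(\xi')]\in\calH_k^\delta(\thin(X_1),\bo)$ maps to $[\xi']$. For injectivity, if $\Phi_*[\xi]=0$, choose a thin chain $\eta'$ in $\thin(X_0)$ of dimension drop at least $\delta$ with $\partial\eta'=\Phi(\xi)$; then $\Phi^{-1}(\eta')$ lies in $\thin(X_1)$, is thin of dimension drop at least $\delta$, and bounds $\xi$, so $[\xi]=0$. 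The main obstacle is precisely the tangent-link estimate in the key lemma: because $\phi'$ is only finite-to-one, $\Phi^{-1}$ will not in general extend to a continuous map $X_0'\to X_1'$ and is therefore not a morphism, so one cannot simply apply Proposition \ref{prop:homom-fc-homol-degree} in reverse; the substitute is the one-sided inclusion above, extracted by tracking converging sequences through the forward extension $\Phi'$ and exploiting the dimension preservation of preimages under finite-to-one definable mappings.
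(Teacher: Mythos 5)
Your proof is correct and follows essentially the same route as the paper: both rely on the finite-to-one, surjective, definable boundary map $\phi'\colon\bdr X_1'\to\bdr X_0'$ to conclude that $\Phi^{-1}$ sends thin chains of dimension drop at least $\delta$ to thin chains of dimension drop at least $\delta$, which yields surjectivity, while injectivity follows from $\Phi$ being a homeomorphism onto $\thin(X_0)$. Your version is more detailed in spelling out the tangent-link inclusion $\So\supp(\Phi^{-1}(\eta'))\subseteq(\phi')^{-1}(\So\supp(\eta'))$ and in noting explicitly that $\Phi^{-1}$ need not be a morphism, but the underlying ideas are the same.
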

\begin{proof}
Since $\thin(X_0) = \Phi (\thin (X_1))$ injectivity is immediate. Since the mapping $\Phi$ extends as a morphism,
any fc-cycle supported in the thin zone $\thin(X_1)$ is mapped onto a fc-cycle supported in $\thin (X_0)$.
Since the homeomorphism $\Phi$ induces a surjective continuous definable finite-to-one mapping $\So X_1 \to \So X_0$, 
any fc-cycle supported in $\thin (X_0)$ and of dimension drop larger than or equal to $\delta$ is mapped by $\Phi^{-1}$ 
into a fc-cycle supported in $\thin (X_1)$ of dimension drop larger than or equal to $\delta$, thus we find the 
surjectivity.
\end{proof}
A straightforward corollary of Proposition \ref{prop:homol-thin-same} is 
\begin{proposition}
The inclusion mapping of the thin zone $\iota:\thin(X_0)\to X_0$ gives the following exact sequences:
\vspace{4pt}
\begin{equation}\label{eq:main-2}
\iota_*:\calH_k^\delta (\thin(X_0),\bo) \to \calH_k^\delta (X_0,\bo) \to 0
\vspace{4pt}
\end{equation}
for any degree $k$ and any dimension drop $\delta\leq k$.
\end{proposition}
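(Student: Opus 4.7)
The plan is to deduce the statement directly from Corollary \ref{cor:main-cor-bis} and Proposition \ref{prop:homol-thin-same} by means of the naturality square attached to any simple embedding morphism. First I would choose a simply embedded model $X_1$ of $X_0$ together with a simple embedding morphism $\Phi : X_1 \to X_0$. By Definition \ref{def:thinzone-nse}, $\thin(X_0) = \Phi(\thin(X_1))$, so $\Phi$ restricts to a continuous definable mapping $\thin(X_1) \to \thin(X_0)$, and together with the inclusions $\iota_1 : \thin(X_1) \to X_1$ and $\iota : \thin(X_0) \to X_0$ it fits into a commutative square
\begin{equation*}
\begin{array}{ccc}
\calH_k^\delta(\thin(X_1),\bo) & \xrightarrow{\ \iota_{1*}\ } & \calH_k^\delta(X_1,\bo) \\
\Big\downarrow \Phi_* & & \Big\downarrow \Phi_* \\
\calH_k^\delta(\thin(X_0),\bo) & \xrightarrow{\ \iota_*\ } & \calH_k^\delta(X_0,\bo)
\end{array}
\end{equation*}
in which all four arrows are well defined by Proposition \ref{prop:homom-fc-homol-degree}.

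The top horizontal arrow is surjective by Corollary \ref{cor:main-cor-bis} applied to $X_1$, and the left vertical arrow is an isomorphism by Proposition \ref{prop:homol-thin-same}. The only remaining ingredient is surjectivity of the right vertical $\Phi_* : \calH_k^\delta(X_1,\bo) \to \calH_k^\delta(X_0,\bo)$, which I would handle exactly as in the proof of Proposition \ref{prop:homol-thin-same}. Indeed $\Phi : X_1 \setminus \bo \to X_0 \setminus \bo$ is a definable homeomorphism, and the induced boundary mapping $\So X_1 \to \So X_0$ is surjective, continuous, definable and finite-to-one; pulling back a representing fc-cycle of dimension drop at least $\delta$ in $X_0$, together with a thin chain contracting it, through $\Phi^{-1}$ produces a cycle and a chain in $X_1$ whose tangent links have the same dimensions, so that the chain is still thin and the dimension drop is preserved.

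A routine diagram chase then yields the desired surjectivity: given $[c] \in \calH_k^\delta(X_0,\bo)$, lift it through the right vertical to some $[\tilde c] \in \calH_k^\delta(X_1,\bo)$; by Corollary \ref{cor:main-cor-bis} choose $[\tilde c'] \in \calH_k^\delta(\thin(X_1),\bo)$ with $\iota_{1*}[\tilde c'] = [\tilde c]$; then the image of $[\tilde c']$ under the left vertical isomorphism is mapped by $\iota_*$ to $[c]$ by commutativity. The main subtle point — and, I expect, the only real obstacle if one writes everything out — is the finite-to-one control on dimensions of tangent links that ensures preservation of the dimension drop $\delta$ when pulling back fc-cycles along $\Phi^{-1}$, which is the exact content already handled in the proof of Proposition \ref{prop:homol-thin-same}.
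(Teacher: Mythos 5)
Your proof is correct and follows essentially the same route as the paper's: the paper likewise picks a simple embedding morphism $\Phi: X_1 \to X_0$, establishes surjectivity of $\Phi_*: \calH_k^\delta(X_1,\bo) \to \calH_k^\delta(X_0,\bo)$ by pulling back fc-cycles through the surjective finite-to-one boundary mapping $\So X_1 \to \So X_0$, and then invokes Proposition \ref{prop:homol-thin-same} together with the surjectivity for the simply embedded model (Corollary \ref{cor:main-cor-bis}) to conclude. You have simply made explicit the commutative naturality square and the resulting diagram chase that the paper compresses into the sentence ``Proposition \ref{prop:homol-thin-same} allows to conclude.''
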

\begin{proof}
Let $\Phi:X_1\to X_0$ be a simply embedded morphism of the singularity $X_0$. We obtain a homomorphism 
$\Phi_*:\calH_\bullet (X_1,\bo)\to \calH_\bullet (X_0,\bo)$. Since the mapping induced on the tangent links 
$\So X_1 \to \So X_0$ is a surjective continuous definable finite-to-one mapping, any fc-cycle of $X_0$ is 
pulled back onto a cycle of $X_1\setminus \bo$ such that it is contracted by a chain of the simply embedded model 
$X_1$ whose tangent cone at the origin is of the exact same dimension 
as the chain contracting the fc-cycle of $X_0$, then the pre-image by the mapping $\Phi$ is a fc-cycle of $X_1$. 
Thus the homomorphism $\Phi_*:\calH_k^\delta (X_1,\bo) \to \calH_k^\delta (X_0,\bo)$ is surjective for any 
degree $k$ and drop dimension $\delta =1,\ldots,k$. 
Proposition \ref{prop:homol-thin-same} allows to conclude.
\end{proof}
%
%
%
%
%
%
%
%
%
%
%
%
%
%
%
%
%
%
%
%
%
%
%
\section{Examples and applications}\label{Section:examples}
This last section presents some examples of fc-homology classes of thick singularities. First we find out 
that the three known obstructions (so far) to the local inner metric conicalness of complex singularities,
namely \em fast loops, separating sets \em and \em choking horns, \em are not of metric nature but
of something slightly less rigid. Second they illustrate a same phenomenon: they represent fc-homology classes 
which may be non trivial in the topological thin zone. Proposition \ref{prop:sep-set} characterizes 
completely the existence of separating sets of simply embedded models of closed definable thick isolated 
singularities with connected link. Last, when the tangent link is of pure dimension 
Theorem \ref{thm:sep-set-pure-dim} asserts that separating sets represents some non trivial
fc-homology classes of the topological thin zone but trivial in the link.

\medskip
We recall quickly the notions of \em fast loops, separating sets and choking horns. \em
Although these notions were introduced for normally embedded models of semi-algebraic singularities,
it is elementary to check that they are all preserved under isomorphisms since each can be defined in 
terms of properties on the link of some tangent cone.
\begin{definition}[\cite{BFN3,Fe2,BNP,BFGoS}]\label{def:fast-loop-etc}
Let $X$ be a normally embedded model of a closed definable isolated singularity of some Euclidean space
with connected link at the origin. 

- A \em fast loop \em is a continuous definable mapping $\gamma: \Sr^1 \to X$ such that 
there exists a continuous definable mapping $h: [0,1]\times \Sr^1 \to X$ such that
\begin{enumerate} 
\item $h (0,\bu) = \bo$ and $h (1,\bu) = \gamma (\bu)$,
\item the image {\em Im}($h$) of the mapping $h$ is thin.
\end{enumerate}

- A \em choking horn in $X$ \em is the image $H$ of a definable continuous mapping 
$\phi\colon [0,1]\times \Sr^p\rightarrow X$, for some positive integer $p$, such that
\begin{enumerate}
\item For any point $(t,\bu)\in[0,1]\times \Sr^p$, its image $\phi(t,\bu)$ lies in the 
intersection $X_t:=X\cap\Sr_t^{n-1}$.
\item The tangent cone of the image of $\phi$ at $\bo$ is just a single real half-line.
\item For every $t$ small enough and any chain $\eta_t$ whose support is contained in the subset $X_t$
and such that the boundary of its support $\bdr (\supp(\eta)_t)$ coincides with the 
intersection $H_t:= H\cap \Sr_t^{n-1}$, there exists a positive real constant $K$ such that 
the diameter of the support of $\eta_t$ satisfies the inequality ${\rm diam}(\supp(\eta)_t)\geq Kt$.
\end{enumerate}

- A separating set $S$ of $X$ is a closed definable subset of $X$ of codimension one, thin at the origin,
such that the germ of the complement $X\setminus S$ has at least two thick connected components.
\end{definition}
We observe that the both notions of fast loop and choking horn represent a fc-contracting homology class. 
For isolated complex singularities the 
presence of either one is an obstruction to metric conicalness \cite{BF3,BFN1,BFN3,Fe2,BFGoS}. 
Note also that the paper \cite{BNP} considers some special fast loops, called \em of the second kind, \em
which are just choking horns of dimension $p=1$.
The main technical result of \cite{BFGoS} asserts that a subanalytic cone (and thus any subanalytically 
bi-Lipschitz homeomorphic image), cannot admit choking horns. Since the proof of this result 
\cite[Theorem 2.6]{BFGoS} relies on arguments about the diameters of some family of links, which can 
be reformulated in terms of dimension of Hausdorff limits of the linearly rescaled family, we observe 
that with the exact same arguments we actually have the following more general result (which could also 
deduce from Corollary \ref{cor:cone-no-hom}):  
\begin{proposition}
Let $X_1$ be a closed definable subset isomorphic to a cone over a topological submanifold.
Then $X_1$ cannot admit a choking horn. 
\end{proposition}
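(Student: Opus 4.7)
The plan is to derive a contradiction from the assumption that $X_1$ admits a choking horn $H = \phi([0,1]\times\Sr^p)$, using the isomorphism-invariance of the choking horn notion (asserted in the paragraph preceding Definition \ref{def:fast-loop-etc}) to reduce to a literal cone, where the local manifold structure of $L$ at the tangent direction of $H$ yields a bounding chain violating condition~3.

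First, the $p$-cycle $H_t := \phi(\{t\}\times\Sr^p) \subset X_t$ is a fast-contracting cycle: it bounds the $(p{+}1)$-chain $\phi([0,t]\times\Sr^p)\subset X_1$, whose support contains $\bo$ and is thin at $\bo$ because the tangent cone of $H$ is a single half-line (condition~2 of the definition). Invoking the isomorphism-invariance of the choking horn notion, we may assume $X_1$ is literally $\cone(L)$, so that $X_t = tL$ and the rescaled cycle $\tilde H_t := \tfrac{1}{t} H_t \subset L$ Hausdorff-converges to a single point $\bu_0\in L$ as $t\to 0$; the uniform convergence here follows from the compactness of $\Sr^p$ and the uniqueness of the tangent direction imposed by condition~2.

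Since $L$ is a topological submanifold, fix a definable Euclidean chart $U$ of $L$ around $\bu_0$. For any sufficiently small $\varepsilon > 0$, the definable neighborhood $B(\bu_0,\varepsilon)\cap L$ sits in $U$ and is contractible. For all $t$ small enough, $\tilde H_t$ lies in this neighborhood, so the defining map $\tilde\phi_t := \tfrac{1}{t}\phi(t,\cdot):\Sr^p\to L$ is null-homotopic through maps with image in $B(\bu_0,\varepsilon)\cap L$; the corresponding definable $(p{+}1)$-chain $\tilde\eta_t$ bounds $\tilde H_t$ and has $\mathrm{diam}(\supp\tilde\eta_t)\leq 2\varepsilon$. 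Rescaling by $t$, the chain $\eta_t := t\cdot\tilde\eta_t \subset tL = X_t$ satisfies $\bdr\eta_t = H_t$ and $\mathrm{diam}(\supp\eta_t)\leq 2\varepsilon t$; choosing $\varepsilon < K/2$ with $K$ the constant of condition~3 contradicts the defining inequality $\mathrm{diam}(\supp\eta_t)\geq Kt$.

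The main obstacle is the reduction to a literal cone via the isomorphism-invariance of the choking horn notion, which the paper asserts as elementary but does not prove here; the delicacy lies in the fact that our isomorphisms need not be bi-Lipschitz and so do not a priori preserve the metric diameter condition~3. Once in the literal cone, the rest is a standard small null-homotopy in a Euclidean chart of $L$ near the tangent direction, rescaled by $t$ to produce chains of diameter $o(t)$, paralleling the \emph{Hausdorff limit} reformulation mentioned just before the Proposition. Alternatively, an appeal to Corollary~\ref{cor:cone-no-hom} shows $H_t$ is null-homologous in $X_1\setminus\bo$; the non-trivial part is then to realize the bounding chain inside $X_t$ itself with small diameter, which is precisely what the local manifold structure provides in the literal-cone model.
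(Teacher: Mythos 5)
Your argument is correct and is essentially the explicit version of what the paper only cites: the paper asserts that the diameter argument of \cite[Theorem 2.6]{BFGoS} carries over once rephrased via Hausdorff limits of the linearly rescaled family, and your small-null-homotopy-plus-rescale construction near $\bu_0$ in the literal cone $\cone(L)$ is exactly that argument spelled out. You correctly flag the one point that both the paper and your reduction leave informal, namely the isomorphism-invariance of the choking-horn notion: condition~3 is metric, so reducing to the literal cone requires the Hausdorff-limit reformulation to be carried out carefully since isomorphisms need not be bi-Lipschitz. Your remark that Corollary~\ref{cor:cone-no-hom} alone gives a bounding chain only in $X_1\setminus\bo$, not one inside $X_t$ with diameter $o(t)$, is a useful sharpening of the paper's parenthetical alternative and pinpoints exactly where the local manifold structure of $L$ near $\bu_0$ must enter.
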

The presence of a separating set from the point of view of the fast contracting homology 
is slightly more complicated to deal with and it can only occur in thick isolated singularities. 
According to \cite{BFN3} the tangent cone at the origin of a separating set separates the tangent cone 
of the ambient subset. It is a property on the tangent links. Consequently, 
as we have already suggested, a separating set is a notion invariant by isomorphisms. 

The next definition, as we will see, is key to characterize completely the notion of separating set.
\begin{definition}\label{def:cond-SC} 
Let $X_1$ be a simply embedded model of a closed definable isolated singularity of some Euclidean space,
thick at the origin and with connected link. Let us consider the following condition:\\
\em (SC): \em The locus of non-simple tangent directions $\Sns X_1$ separates the tangent link $\So X_1$
and there exists a non empty connected component of the complement $\So X_1 \setminus \Sns X_1$ such that 
its closure intersects with $\Sns X_1$ in codimension larger than of equal to two. 
\end{definition}

The main interest of condition (SC) in relation with separating sets lies in the following
\begin{lemma}\label{lem:sep-set-1}
Let $X_1$ be a simply embedded model of a closed definable isolated singularity of some Euclidean space,
thick at the origin and with connected link. If Condition $(SC)$ holds true, then $X_1$ admits a separating set.
\end{lemma}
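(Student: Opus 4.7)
The plan is to exhibit a thin, codimension-one, closed definable subset $S\subset X_1$ such that $X_1\setminus S$ has at least two thick connected components. Let $d=\dim X_1$ and let $C$ be the connected component of $\So X_1\setminus \Sns X_1$ furnished by condition (SC), so that $T:=\overline{C}\cap \Sns X_1$ has $\dim T\le d-3$ (codimension $\ge 2$ in $\So X_1$, which is of dimension $d-1$). The candidate $S$ will be the blow-down under $\beta$ of the topological frontier in $X_1'$ of a carefully chosen closed definable region $\tilde Y\subset X_1'$ whose trace on $\bdr X_1'$ is exactly $\overline{C}\times 0$.

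I would first construct $\tilde Y$. Since $C$ consists only of simple tangent directions, each point of $C\times 0$ has a half-space neighbourhood in $X_1'$, so $C\times 0$ is locally collared in $X_1'$. The definable version of Brown's Theorem \ref{thm:Brown} then produces a definable collar $h:C\times[0,\ve[\to\calW\subset X_1'$ with $h(u,0)=(u,0)$. Define $\tilde Y$ as the closure in $X_1'$ of the half-height collar $\calW\cap\{t<\ve/2\}$. The inclusion $\overline{C}\times 0\subseteq \tilde Y\cap\bdr X_1'$ is immediate from the closure of $C\times 0$. The reverse inclusion uses the definable parametrisation of the collar: any sequence $(u_n,t_n)=h(c_n,s_n)\in\calW$ converging to some $(u_0,0)\in\bdr X_1'$ forces $s_n\to 0$ (since $h$ is a homeomorphism with $h(u,0)=(u,0)$), and then $u_n$ is asymptotic to $c_n\in C$, so the limit $u_0$ lies in $\overline{C}$. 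Hence $\tilde Y\cap\bdr X_1'=\overline{C}\times 0$.

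Set $\tilde S:=\bdr\tilde Y$ (the topological frontier of $\tilde Y$ in $X_1'$) and $S:=\beta(\tilde S)$. There are three things to verify. \emph{Dimension:} $\tilde S$ contains the ``top'' $\{t=\ve/2\}\cap\calW$, a subset of dimension $d-1$; since $\beta$ restricts to a definable homeomorphism on $X_1'\setminus\bdr X_1'$, $\dim S=d-1$. \emph{Thinness:} at any simple boundary point $(u,0)$ with $u\in C$, $\tilde Y$ contains the entire local collar, so $(u,0)\in\intr_{X_1'}\tilde Y$ and hence $(u,0)\notin\tilde S$; therefore $\tilde S\cap\bdr X_1'\subseteq T\times 0$, giving $\So S\subseteq T$ and $\dim\So S\le d-3<\dim S-1$, so $S$ is thin. \emph{Separation:} the complement $X_1'\setminus(\tilde S\cup\bdr X_1')$ splits into the two disjoint open pieces $\intr(\tilde Y)\setminus\bdr X_1'$ and $(X_1'\setminus\tilde Y)\setminus\bdr X_1'$. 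The first, non-empty as it contains $\calW\setminus\bdr X_1'$, has tangent link containing $C$ (dimension $d-1$), so its $\beta$-image is a thick open subset of $X_1\setminus S$. Since $\Sns X_1$ separates $\So X_1$, there exists a second component $C'\neq C$ of $\So X_1\setminus\Sns X_1$; because $\tilde Y\cap\bdr X_1'=\overline{C}\times 0$ is disjoint from $C'$, the collar over $C'$ lies entirely in the second piece, furnishing a thick component on the other side of $S$.

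The main obstacle will be making precise the equality $\tilde Y\cap\bdr X_1'=\overline{C}\times 0$---specifically the inclusion that closure does not add extra boundary points outside $\overline{C}\times 0$. This is precisely where the codimension-$\ge 2$ clause of (SC) does its work: it guarantees that $\tilde S\cap\bdr X_1'$ has dimension at most $d-3$, which is what forces $S$ to be thin rather than merely codimension-one. The required estimate is a definable incarnation of the fact that a well-controlled collar respects the closure of its boundary trace, combined with the half-space local model at simple tangent directions provided by Brown's theorem.
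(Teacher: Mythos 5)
Your approach differs genuinely from the paper's. You work in the strict transform $X_1'$, build a collar over the open component $C\times 0$ by invoking Brown's theorem, and take as candidate separating set the image under $\beta$ of the frontier of (half of) that collar. The paper instead works directly in $X_1$: it removes a horn neighborhood $\calU(1+a,K)=\{x:\dou(\nu(x),S)<K|x|^{1+a}\}$ around the cone over (a component $S$ of) the frontier $\clos(\calU)\cap\Sns X_1$, and declares the resulting topological boundary in $X_1$ to be the separating set. Both proofs start from the distinguished component $C=\calU$ furnished by (SC), but the mechanism that guarantees thinness of the candidate set is essentially different, and that is exactly where your sketch breaks down.

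The obstacle you flag at the end is in fact fatal as written. The definable version of Brown's theorem produces a homeomorphism $h:C\times[0,\ve[\to\calW$ onto some open neighborhood of $C\times0$ with $h(c,0)=(c,0)$, but since $C$ is an open, \emph{non-compact} subset of $\So X_1$, the theorem says nothing about how $\calW$ behaves near the frontier $T\times0=(\clos(C)\setminus C)\times 0$. In particular $\calW$ can perfectly well have definable ``tentacles'' at small positive heights $t>0$ accumulating onto boundary points $(u_0,0)$ with $u_0\notin\clos(C)$ — e.g.\ over a neighbouring component $C'$ — while still satisfying $\calW\cap\bdr X_1'=C\times0$. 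Your step ``$s_n\to 0$, and then $u_n$ is asymptotic to $c_n$'' silently uses uniform continuity of $h$ near $T\times\{0\}$, which is not available: along a sequence $c_n\to c_0\in T$, the quantities $h(c_n,s_n)$ and $(c_n,0)$ need not be close even for $s_n\to 0$. If such accumulation occurs, the extra limit points land in $\bdr\tilde Y\cap\bdr X_1'$, so $\So S$ can have dimension $d-2$ and $S$ is no longer thin. Crucially, the codimension-$\ge2$ clause of (SC) does \emph{not} rescue this: it bounds $\dim T$, not the accumulation set of an uncontrolled collar.

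The paper's horn-neighborhood device is precisely a collar whose decay rate is controlled by hand: $\calU(1+a,K)$ tapers onto $\cone(S)$ at rate $|x|^{1+a}$, so the tangent cone of its boundary is a priori contained in $\clos(\cone(S))$, of dimension $\le d-2$; thinness is then automatic from (SC). To salvage your strict-transform construction you would need to replace the abstract Brown collar by a \emph{tapered} definable collar over $C\times 0$ whose $s$-extent shrinks to zero compatibly with $h$ as $c\to T$ — which, after blowing down, is exactly the paper's horn picture. Also be a bit more careful in the final separation step: you should check that the collar over the second component $C'$ can be chosen disjoint from $\tilde Y$ (shrink both near $\bdr X_1'$), and you do not need the pieces to be connected, only to contain at least one thick component each.
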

\begin{proof}
Let $\calU$ be a connected component of the simple tangent directions locus $\Sos X_1 = \So X_1 \setminus \Sns X_1$ 
whose closure intersects with $\Sns X_1$ in codimension at least two. Let $S$ be a connected component of this intersection.
A horn neighborhood of the cone over $S$ is a subset of the form
\begin{center}
$\calU(1+a,K) = \{x\in\Rn\setminus\bo: \dou (\nu(x),S) < K |x|^{1+a}\} \cup \{\bo\}$,
\end{center} 
for a positive real number $a$ and some positive constant $K$. There exists a connected component $\mathcal{C}$ of the subset $X_1\setminus\calU(1+a,K)$ whose tangent cone is exactly $\clos(\mathcal{C})$, for small enough positive real numbers $a$ 
and for some positive constant $K$. Thus the boundary of the complement $X_1\setminus\calU(1+a,K)$ is a separating set.
\end{proof}

\smallskip
The relation of a separating set with the topological thin zone, its fast contracting homology and the
locus of non-simple tangent directions is described in the next result.
\begin{lemma}\label{lem:sep-set-2}
Let $X_1$ be a simply embedded model of a closed definable isolated singularity of some Euclidean space,
thick at the origin and with connected link. If there exists a separating set $S_1$, 
then there exists another separating set $S$ contained in the topological thin zone $\thin (X_1)$ 
such that for any positive real number $t$ small enough, the intersection $S_t = S\cap \Sr_t^{n-1}$ 
gives rise to a fc-cycle which cannot be contracted within the topological thin zone $\thin (X_1)$. 
In other words the inclusion mapping $\iota:\thin (X_1) \to X_1$ induces 
a surjective and non-injective homomorphism
\begin{center}
$\iota_* :\calH_{d-2} (\thin(X_1),\bo) \to \calH_{d-2}(X_1,\bo)$.
\end{center} 
Moreover condition \em (SC) \em holds true. 
\end{lemma}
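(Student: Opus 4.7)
The plan is to proceed in three stages, relying on the tight interplay between the tangent structure of $X_1$ and the thinness of $S_1$ (so $\dim \So S_1 \le d-3$, since $\dim S_1 = d-1$).

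First I analyze simple tangent directions. At any $\bu \in \Sos X_1$, the strict transform $X_1'$ is a topological manifold with boundary at $(\bu,0)$, so above the ray $\R_+\bu$ only one layer of $X_1$ is present. Based on the thick components $A$ and $B$ of $X_1 \setminus S_1$, I partition $\Sos X_1$ into clopen subsets $U_A$, $U_B$, $U_{AB}$, according to whether this layer lies in $A$ only, in $B$ only, or contains both separated by $S_1$ inside the local manifold. If both $A$ and $B$ appear at a simple $\bu$, then $S_1$ must meet $\bdr X_1'$ at $(\bu,0)$ to separate $A'$ from $B'$; hence $U_{AB} \subseteq \So S_1$. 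But $U_{AB}$ is open in $\Sos X_1$ of dimension $d-1$, while $\dim \So S_1 \le d-3$, forcing $U_{AB} = \emptyset$ and $\So S_1 \subseteq \Sns X_1$. By thickness of $A$ and $B$, both $U_A$ and $U_B$ are nonempty, so $\Sns X_1$ separates $\So X_1$, establishing (SC)(i).

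Next I establish (SC)(ii) by a horn decomposition. For appropriate $a > 0$ and $K > 0$ the horn $\calU(1+a, K) = \{x : \dist(\nu(x), \So S_1) < K|x|^a\}$ contains $S_1$ locally, so the complement $X_1 \setminus \calU(1+a, K)$ is thick and contained in $A \cup B$. By Corollary~\ref{cor:isom-tgt-cone}, each of its finitely many connected components is isomorphic to its own tangent cone. Let $\mathcal{C}$ be such a component; then $\So\mathcal{C}$ is a closed $(d-1)$-dimensional topological submanifold with boundary of $\So X_1$. The boundary of $\mathcal{C}$ in $X_1$ lies in $\clos(\calU(1+a,K))$, so $\So \bdr_{X_1}\mathcal{C} \subseteq \So S_1$, since the horn collapses onto $\So S_1$ as $|x|\to 0$. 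The topological interior $\Omega$ of $\So\mathcal{C}$ in $\So X_1$ is therefore open and closed in $\Sos X_1$, hence a connected component of $\Sos X_1$, and $\clos(\Omega) \cap \Sns X_1 = \bdr\Omega \subseteq \So S_1$ has dimension $\le d-3$, giving (SC)(ii). Having established (SC), the construction of Lemma~\ref{lem:sep-set-1} applied to the pinched component $\Omega$ produces a separating set $S$ as the boundary in $X_1$ of a horn neighborhood $\calU(1+a', K')$ of the cone over a connected component of $\bdr\Omega$, for suitable parameters. By construction $S$ lies in a horn shrinking onto $\Sns X_1$ faster than linearly, hence in $\thin(X_1)$ for the natural choice of definable neighborhood; I write $\mathcal{C}^{\mathrm{out}}$ for the thick component of $X_1 \setminus S$ outside this horn (isomorphic to its tangent cone) and $\mathcal{C}^{\mathrm{in}}$ for the complementary thick component inside.

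Finally I prove the non-injectivity of $\iota_*$. The cycle $S_t := S \cap \Sr_t^{n-1}$ bounds the thick $(d-1)$-dimensional chain $\mathcal{C}^{\mathrm{out}} \cap \Sr_t^{n-1}$ in the link of $X_1$, so $[S_t] = 0$ in $\calH_{d-2}(X_1, \bo)$. To show $[S_t] \ne 0$ in $\calH_{d-2}(\thin(X_1), \bo)$, I use an intersection-theoretic argument in the link $X_1 \cap \Sr_t^{n-1}$: pick a definable path $\gamma$ joining an interior thick point of $\mathcal{C}^{\mathrm{out}}$ to an interior thick point of $\mathcal{C}^{\mathrm{in}}$, with both endpoints chosen in simple directions well outside $\thin(X_1)$, which is possible because simple directions distant from $\Sns X_1$ lift to link points outside the thin zone. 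Since $S_t$ separates the link, $\gamma$ crosses $S_t$ transversely an odd number of times. Were $S_t = \partial C$ for a chain $C$ supported in $\thin(X_1) \cap \Sr_t^{n-1}$, then $\gamma \cap C$ would be a compact subset of the open arc $\gamma$ disjoint from its endpoints, so the mod-two boundary count of $\gamma \cap C$ on $\gamma$ (which equals the crossing count $\#(\gamma\cap S_t)$) would have to be even, a contradiction. The subtle step in the plan is the horn decomposition above, which realizes a component of $\Sos X_1$ with codimension $\ge 2$ boundary as the tangent link interior of a horn-cap thick component, thereby reading the pinched structure off directly from the thinness of $\So S_1$.
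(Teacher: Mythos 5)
Your proposal takes a genuinely different route from the paper's own proof, and the comparison is worth making explicit.

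The paper's proof is considerably shorter and more intrinsic. It takes a thick component $Y_1$ of $X_1\setminus S_1$ and lets $Y_2$ be the union of the remaining components; it then sets $S:=\clos(Y_1)\cap\clos(Y_2)$ directly. It observes that $\So S$ cannot contain any simple direction (because at a simple point the strict transform is a single sheet, and a nontrivial frontier point between $\intr\clos(Y_1)$ and $\intr\clos(Y_2)$ cannot sit in the interior of either tangent link), so $\So S\subseteq\Sns X_1$ and hence $S\subseteq\thin(X_1)$. Condition (SC) drops out at the end because $\So S$ disconnects $\So\clos(Y_1)$ from the rest in codimension $\ge 2$. Your argument instead first establishes (SC) from scratch by a partition $\Sos X_1=U_A\sqcup U_B\sqcup U_{AB}$, then rebuilds a new separating set $S$ by invoking Lemma~\ref{lem:sep-set-1} (the converse direction) through a horn construction, and finally proves non-contractibility by an explicit path/parity argument. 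One real merit of your write-up is this last step: the paper simply asserts that $(S_t)_t$ ``cannot be contracted'' in the thin zone without justification, whereas your intersection-theoretic argument (a path joining the two thick components must cross $S_t$ an odd number of times, yet a bounding chain in $\thin(X_1)$ would give an even count) actually supplies the missing reasoning; it is essentially the implicit idea behind the paper's assertion. Your dimension-drop argument showing $U_{AB}=\emptyset$ and the clopen decomposition of $\Sos X_1$ is also a clean alternative to the paper's interior-point argument for $\So S\subseteq\Sns X_1$.

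A few things need repair. First, the line ``forcing $U_{AB}=\emptyset$ and $\So S_1\subseteq\Sns X_1$'' overclaims: the dimension comparison only forces $U_{AB}=\emptyset$; a priori $\So S_1$ can still meet $\Sos X_1$ in a set of dimension $\le d-3$, and nothing you wrote rules that out (it is also not needed for the rest). Second, your appeal to Corollary~\ref{cor:isom-tgt-cone} is for the complement of an $\ve$-cone over $\Sns X_1$, not for the complement of a sub-linear horn around $\So S_1$; passing from one to the other needs a short argument (e.g.\ that the horn is contained in such an $\ve$-cone once one knows the relevant part of $\So S_1$ lies in $\Sns X_1$, which is itself not established). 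Third, the labels $\mathcal C^{\mathrm{out}}$/$\mathcal C^{\mathrm{in}}$ are misleading: the interior of the horn is thin, so \emph{both} thick components of $X_1\setminus S$ lie ``outside'' the horn in the literal sense; what you actually want is the cone component over the pinched piece $\Omega$ versus the component over the rest of $\So X_1$. Finally, re-deriving (SC) and then re-applying Lemma~\ref{lem:sep-set-1} is roundabout: the paper's direct choice $S=\clos(Y_1)\cap\clos(Y_2)$ already does everything at once and avoids the horn bookkeeping; your parity argument would work equally well for that $S$.
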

\begin{proof}
Since the subset $S_1$ separates $X_1$, let $Y_1$ be one of the thick connected components of the 
complement $X_1 \setminus S_1$ and let $Y_2$ be the union of the other connected components. Both are 
open and thick subsets of $X_1$. The intersection of their closures $\clos (Y_1) \cap \clos (Y_2)$ is 
contained in the separating set $S_1$. We observe that the complement of this intersection 
$X \setminus (\clos (Y_1) \cap \clos (Y_2))$ coincides with the disjoint union 
$\intr (\clos (Y_1)) \sqcup \intr (\clos (Y_2))$ of the interior of the closures
of $Y_1$ and $Y_2$. Moreover the tangent link of $\clos (Y_1) \cap \clos (Y_2)$ 
cannot contain any interior point of the tangent links of $\clos(Y_1)$ and $\clos (Y)_2$.
Thus the tangent link of $\clos (Y_1) \cap \clos (Y_2)$ cannot contain any simple tangent direction,
so it is contained in $\Sns X_1$ the locus of non-simple tangent directions. 
Since the intersection $\clos (Y_1) \cap \clos (Y_2)$ separates the subset $X_1$ and is contained in $\clos(Y_1)$, 
it is a separating set. Moreover the subset $\clos (Y_1) \cap \clos (Y_2)$ is contained in the thin zone
since its tangent link is contained in $\Sns X_1$.
Last $\clos (Y_1) \cap \clos (Y_2)$ is the boundary of the disjoint union $\intr (\clos (Y_1)) \sqcup \intr (\clos (Y_2))$.
Defining $S$ as the intersection $\clos (Y_1) \cap \clos (Y_2)$, we deduce that 
the family $(S_t)_t$, intersections of the subset $S$ with the Euclidean spheres of small radius $t$, are fc-cycles 
which are trivial in the link $(X_1)_t$ but which cannot be contracted in any sufficiently small 
conic neighborhood $\cone(\Sns X_1,\ve)\cap X_1$ of the cone over the locus of non-simple 
tangent directions, that is the topological thin zone.

It is clear that the intersection of the locus $\Sns X_1$ with the tangent link of the closure
$\clos(Y_1)$ is of codimension at least two since the tangent link of the closure $\clos (Y_1)$ 
is disconnected from its complement in $\So X_1$ by the tangent link $\So S$ of the subset $S$.
\end{proof}
We can now state the first main result of this section which is just a juxtaposition of the 
two previous Lemma. 
\begin{proposition}\label{prop:sep-set}
Let $X_1$ be a simply embedded model of a closed definable isolated singularity of some Euclidean space,
thick at the origin and with connected link. 

There exists a separating set if and only if the locus of non-simple tangent directions $\Sns X_1$ 
satisfies condition \em (SC). \em 
\end{proposition}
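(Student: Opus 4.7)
The proof is essentially a direct juxtaposition of Lemma \ref{lem:sep-set-1} and Lemma \ref{lem:sep-set-2}, as already signaled in the paragraph introducing the statement. My plan is therefore to split along the biconditional and cite each lemma for one direction.

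For the implication \emph{(SC) implies existence of a separating set}, I would invoke Lemma \ref{lem:sep-set-1} verbatim. There one selects a connected component $\calU$ of $\Sos X_1$ whose closure meets $\Sns X_1$ in codimension at least two, takes a connected component $S$ of that intersection, and builds a horn neighborhood $\calU(1+a,K)$ of the cone over $S$. For appropriate $a>0$ and $K>0$, at least one connected component of the complement $X_1\setminus\calU(1+a,K)$ has its closure as its own tangent cone, whereupon the boundary of that complement inside $X_1$ is a separating set.

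For the converse, \emph{existence of a separating set implies (SC)}, I would quote the last sentence of Lemma \ref{lem:sep-set-2}. The relevant point established there is: if $S_1$ is a separating set with thick components $Y_1$ and $Y_2:=X_1\setminus(\clos(Y_1)\cup S_1)$, then $S:=\clos(Y_1)\cap\clos(Y_2)$ is again a separating set whose tangent link is disjoint from all simple tangent directions, so $\So S\subset \Sns X_1$. Since $\So S$ disconnects $\So(\clos(Y_1))$ from its complement in $\So X_1$, the locus $\Sns X_1$ separates $\So X_1$, and moreover the intersection of $\Sns X_1$ with the tangent link of $\clos(Y_1)$ is of codimension at least two — this is exactly condition (SC).

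Since both directions are already contained in the two preceding lemmas, no genuine obstacle remains; the only task is to write out the argument cleanly without duplicating the horn neighborhood construction or the tangent-link analysis. The potential pitfall to be careful about is pedantic — making sure in the reverse direction that the component witnessing the codimension-two requirement of (SC) is the correct one, namely the tangent link of a thick component of $X_1\setminus S$ rather than of $S$ itself; this is handled by the observation that $\So(\clos(Y_1))$ is a full-dimensional piece of $\So X_1$ whose boundary inside $\So X_1$ lies in $\So S\subset \Sns X_1$ and has codimension at least two in $\So(\clos(Y_1))$.
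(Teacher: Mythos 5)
Your proposal matches the paper's proof exactly: the paper states that Proposition \ref{prop:sep-set} ``is just a juxtaposition of the two previous Lemma,'' meaning Lemma \ref{lem:sep-set-1} for the direction (SC) $\Rightarrow$ separating set, and the final sentence of Lemma \ref{lem:sep-set-2} for the converse. Your elaboration of the content of each lemma is accurate and the concern you flag about which component witnesses the codimension-two requirement is correctly resolved, so there is nothing missing.
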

The second main result of this section clearly highlights the connection between separating sets and 
the fast contracting homology of the topological thin zone. 
\begin{theorem}\label{thm:sep-set-pure-dim}
Let $X_1$ be a simply embedded model of a closed definable isolated singularity of some Euclidean space
thick at the origin and with connected link. Suppose the tangent link is of pure dimension $d-1$. 
There exists a separating set if and only the surjective homomorphism of the fc-homology groups 
\begin{center}
$\iota_*:\calH_{d-2} (\thin(X_1),\bo) \to \calH_{d-2}(X_1,\bo)$
\end{center} 
induced by the inclusion mapping $\iota:\thin (X_1) \to X_1$ is not injective
\end{theorem}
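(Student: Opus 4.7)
The forward direction is essentially Lemma \ref{lem:sep-set-2}: given a separating set $S_1$, the refinement $S=\clos(Y_1)\cap\clos(Y_2)$ constructed there lies inside $\thin(X_1)$, and its sphere slices $S_t=S\cap\Sr_t^{n-1}$ furnish $(d-2)$-dimensional fc-cycles that bound a $(d-1)$-chain in $X_1$ (namely the slice of one of the thick components $\clos(Y_1)\cap\bB^n(\bo,t)$) yet cannot be contracted inside $\thin(X_1)$. Any such class witnesses the non-injectivity of $\iota_*$ at level $d-2$.

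For the converse, I plan to invoke Proposition \ref{prop:sep-set} and reduce the problem to establishing condition (SC). Pick a definable fc-cycle $\xi$ of dimension $d-2$ supported in $\thin(X_1)$ whose class is non-trivial in $\calH_{d-2}(\thin(X_1),\bo)$ but trivial in $\calH_{d-2}(X_1,\bo)$, and let $\eta$ be a definable $(d-1)$-chain in $X_1\setminus\bo$ with $\bdr\eta=\xi$. The key object is the tangent link $Y:=\So\supp(\eta)\subset\So X_1$; since $\So X_1$ is of pure dimension $d-1$, once I know $\dim Y=d-1$ the intersection $W:=Y\cap\Sos X_1$ is automatically an open subset of $\Sos X_1$. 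First I would argue $\dim Y=d-1$ and $Y\neq\So X_1$ by contradiction: combining Corollary \ref{cor:isom-tgt-cone} (each thick component is isomorphic to its tangent cone) with Corollary \ref{cor:cone-no-hom} (fc-cycles in a cone are trivial), any $\eta$ which is thin at $\bo$ or whose tangent link fills $\So X_1$ can be replaced by an equivalent chain sitting entirely inside $\thin(X_1)$, contradicting the non-triviality of $[\xi]$ there. This makes $W$ a proper non-empty open subset of $\Sos X_1$ whose topological boundary in $\So X_1$ is of codimension $1$ and contained in $\Sns X_1$ --- indeed the tangent trace of the boundary of a $(d-1)$-chain $\eta$ whose oriented boundary $\xi$ lives in $\thin(X_1)$. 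This already disconnects $\Sos X_1$, giving (SC)(a). For (SC)(b), at least one component of $\Sos X_1$ is disjoint from $W$; a dimension count using that $\So\supp(\xi)\subset\Sns X_1$ is of dimension at most $d-2$ and that the generic codimension-$1$ boundary piece is accounted for by the tangent trace of $\supp(\xi)$ itself, shows that the closure of such a component must meet $\Sns X_1$ in codimension at least $2$. Applying Proposition \ref{prop:sep-set} then produces the desired separating set.

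The main obstacle will be the \emph{cone-replacement} step above: rigorously showing that when $\eta$ is thin or when $Y=\So X_1$, one can modify $\eta$ across the thick zone to produce a homologous chain entirely contained in $\thin(X_1)$. The plan for this is to fix a definable triangulation of $X_1$ adapted to $\eta$, to $\thin(X_1)$ and to its boundary; decompose $\eta=\eta_{\mathrm{th}}+\eta_{\mathrm{tk}}$ along this triangulation; observe that the interface cycle $\bdr\eta_{\mathrm{tk}}$ is a fc-cycle in each thick component (the origin lies in the closure of $\bdr\thin(X_1)$, the support is thin); contract it inside the cone by Corollary \ref{cor:cone-no-hom}; and finally use Brown's collaring Theorem \ref{thm:Brown} applied on the strict transform to push the resulting filling across $\bdr\thin(X_1)$ into $\thin(X_1)$ proper. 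Once this replacement principle is in hand, the rest of condition (SC) falls out of the codimension-$1$ geometry of $\bdr Y$ in $\So X_1$ and the pure-dimensionality hypothesis, closing the argument.
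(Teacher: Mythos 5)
The forward direction (via Lemma \ref{lem:sep-set-2}) matches the paper. For the converse, your route is genuinely different from the paper's: you aim to establish Condition (SC) and then invoke Proposition \ref{prop:sep-set}, whereas the paper works directly with the support of the fast-contracting chain $\eta$ of $\xi$ (the chain that is thin and passes through the origin), shows that $\supp(\eta)$ separates $X_1$, and then derives a contradiction with the pure-dimensionality hypothesis under the assumption that only one of the resulting components is thick --- so $\supp(\eta)$ itself is the separating set. Your (SC)-based plan is a legitimate alternative strategy in principle; the trouble is with your choice of $\eta$.

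You let $\eta$ be a definable $(d-1)$-chain in $X_1\setminus\bo$ with $\bdr\eta=\xi$, and then define the central object $Y:=\So\supp(\eta)$. This is where the argument breaks: a definable singular chain supported in $X_1\setminus\bo$ has compact support bounded away from the origin, so $\So\supp(\eta)=\emptyset$. Consequently $\dim Y$ cannot equal $d-1$, $W=Y\cap\Sos X_1$ is empty, and the whole codimension analysis that is supposed to produce Condition (SC) never gets started. Worse, if one blindly runs your proposed cone-replacement dichotomy (``if $\eta$ is thin at $\bo$ or $Y=\So X_1$, push $\eta$ into $\thin(X_1)$''), then since the chain you picked is vacuously thin at $\bo$, you would conclude that $\xi$ always bounds in $\thin(X_1)\setminus\bo$, i.e.\ that $\iota_*$ is always injective --- which would make the theorem vacuous and is of course false. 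The object you should work with is the \emph{other} contracting chain, namely the thin $(d-1)$-chain through the origin furnished by $[\xi]\in\calH_{d-2}(\thin(X_1),\bo)$ (the one the paper uses); its support does have a non-trivial tangent link contained in $\Sns X_1$, and it is what separates $X_1$. Even after that correction, the cone-replacement step you flag as the main obstacle is not yet carried out; the paper avoids it entirely by arguing via the separation $X_1\setminus\supp(\eta)$ and a dimension count on $\So X_1=\So\thin(X_1)\cup\So\clos(Y_1)\cup\So Z$ together with pure-dimensionality.
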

\begin{proof}
One of the statement is just of Lemma \ref{lem:sep-set-2}.

\smallskip
Suppose that the homomorphism is not injective. 
There exists a $(d-2)$-cycle $\xi$ contracted by a chain $\eta$ with thin support and containing the origin.
Since the cycle $\xi$ is trivial in the link $\So X_1$ it separates the link, and thus
the support of $\eta$ must separate $X_1$. Let $Y_1$ be a thick connected component
of the complement $X_1\setminus\supp (\eta)$. Assume that all the other connected components are thin.
Since the cycle $\xi$  is not trivial in the thin zone, it cannot be contracted in the link of the thin zone $\thin (X_1)$.
We deduce that there must be a point in the tangent link which cannot lie in the union of the tangent link
of the thin zone $\thin (X_1)$ with the tangent link of $\clos(Y_1)$. This implies that the tangent link of the 
closure of the complement $X_1\setminus (\supp(\eta)\cup Y_1)$ is of dimension smaller than or equal to $\dim X_1 -2$
and cannot be contained in the tangent link of the union $Y_1\cup\thin(X_1)$ (which contains the tangent link 
of the union $Y_1\cup\supp(\eta)) $. 
This latter fact contradicts that the tangent link $\So X_1$ is of pure dimension $\dim X_1 -1$.
\end{proof}
Separating sets were introduced and exhibited as objects whose presence denies the local inner metric 
conicalness of the set-germ at the considered point.

Proposition \ref{prop:sep-set} provides an equivalent formulation of the existence of
separating set for simply embedded models of thick isolated singularities by means of some topological conditions 
on the tangent link (Condition (SC)). Theorem \ref{thm:sep-set-pure-dim}
emphasizes the non-triviality in the topological thin zone of the fast contracting homological nature of 
separating sets.  

These last two results suggest that the notion of separating set could be slightly generalized in order to still 
detect an obstruction to (local) metric conicalness.

\end{document}